\numberwithin{equation}{section}
\newtheorem{thm}{Theorem}[section]
\newtheorem{rmk}{Remark}[section]
\newtheorem{lem}{Lemma}[section]
\newtheorem*{prf}{Proof}
\begin{document}

\begin{frontmatter}

	\title{Explicit and CPU/GPU parallel energy-preserving schemes for the Klein-Gordon-Schr\"odinger equations}
	\author{Xuelong Gu$^{a}$}
	\author{Yushun Wang$^{b}$}
	\author{Ziyu Wu$^{c}$}
	\author{Jiaquan Gao$^{c}$}
	\author{Wenjun Cai$^{b,*}$}

	\address[1]{Department of Mathematics, University of South Carolina, Columbia, SC, 29208, USA\\ \vspace{0cm}}

	\address[2]{Ministry of Education Key Laboratory for NSLSC\\ School of Mathematical Sciences,
		Nanjing Normal University, Nanjing, 210023, China  \\ \vspace{0cm} }

	\address[3]{School of Computer and Electronic Information, Nanjing Normal University, Nanjing, 210023, China  \\ \vspace{0cm} }

	\begin{abstract}

		A highly efficient energy-preserving scheme for univariate conservative or dissipative systems was recently proposed in [Comput. Methods Appl. Mech. Engrg. 425 (2024) 116938]. This scheme is based on a grid-point partitioned averaged vector field (AVF) method, allowing for pointwise decoupling and easy implementation of CPU parallel computing. In this article, we further extend this idea to multivariable coupled systems and propose a dual-partition AVF method that employs a dual partitioning strategy based on both variables and grid points. The resulting scheme is decoupled, energy-preserving, and exhibits greater flexibility. For the Klein-Gordon-Schr\"odinger equations, we apply the dual-partition AVF method and construct fully explicit energy-preserving schemes with pointwise decoupling, where the computational complexity per time step is $\mathcal{O}(N^d)$, with $d$ representing the problem dimension and $N$ representing the number of grid points in each direction. These schemes not only enable CPU parallelism but also support parallel computing on GPUs by adopting an update strategy based on a checkerboard grid pattern, significantly improving the efficiency of solving high-dimensional problems. Numerical experiments confirm the conservation properties and high efficiency of the proposed schemes.

	\end{abstract}

	\begin{keyword}
		Explicit energy-preserving algorithm;
		Discrete gradient method;
		Averaged vector field method;
		Parallel implementation;
		Klein-Gordon-Schr\"odinger equations
	\end{keyword}

\end{frontmatter}

\begin{figure}[b]
	\small \baselineskip=10pt
	\rule[2mm]{1.8cm}{0.2mm} \par
	$^{*}$Corresponding author.\\
	E-mail address: caiwenjun@njnu.edu.cn (W. Cai).
\end{figure}

\section{Introduction}


In this paper, we numerically investigate the dynamics governed by the following nonlinear Klein-Gordon-Schrödinger (KGS) equations:
\begin{equation}\label{kgs}
	\left\lbrace\begin{aligned}
		 & i\partial_t\psi + \frac{\kappa_1}{2}\Delta \psi + \gamma u\psi = 0,                                 &  & \text{in}~~\Omega \times (0,T], \\
		 & \partial_{tt} u - \kappa_2 \Delta u + \mu^2 u - \gamma |\psi|^2 = 0,                                &  & \text{in}~~\Omega \times (0,T], \\
		 & \psi(\bm{x},0) = \psi_0(\bm{x}),~~u(\bm{x},0) = u_0(\bm{x}),~~\partial_t u(\bm{x},0) = v_0(\bm{x}), &  & \text{in}~~\Omega,
	\end{aligned}\right.
\end{equation}
subject to periodic boundary conditions. Here, $\Omega \subset \mathbb{R}^d$ is a bounded domain, $i$ denotes the imaginary unit ($i^2 = -1$), and $\kappa_1, \kappa_2, \mu, \gamma$ are real constants representing various physical parameters of the system. The KGS equations \eqref{kgs} describe the dynamics of interactions between a complex-valued scalar nucleon field $\psi$ and a real-valued scalar meson field $u$ via a Yukawa-type coupling, playing a significant role in quantum field theory. The existence, uniqueness, and stability properties of the KGS equations have been extensively studied in the literature. We ref to Refs. \cite{fukuda1975coupledI,fukuda1975yukawa,fukuda1978coupledII,fukuda1980coupledIII,guo1982global,guo1995asymptotic} and the references therein for comprehensive theoretical analysis of the KGS equations \eqref{kgs}.

%

Soliton solutions of the KGS equations are crucial for modeling localized wave packets that represent particles and quantum fields \cite{Makhankov1978,Hioe-2003}. The stability of these solitons is intrinsically tied to the conservation of total energy, expressed as:
\begin{equation}\label{ene-origin}
	\frac{d\mathcal{E}(\psi, u)}{dt} = 0, \quad \text{where} \quad \mathcal{E}(\psi, u) = \int_\Omega \left[ \frac{1}{2} \left(\kappa_1 |\nabla \psi|^2 + \kappa_2 |\nabla u|^2 + |u_t|^2 + \mu^2 u^2 \right) - \gamma |\psi|^2 u \right] d \bm{x}.
\end{equation}
Accordingly, the development of energy-preserving numerical algorithms is essential for accurately capturing the dynamics of these solitons in computational simulations. Numerical evidences demonstrate that such energy-preserving algorithms exhibit superior performance in long-term simulations and are effective in eliminating spurious discrete solutions \cite{vu-quoc1993,li1995,sharma2020}. A significant amount of foundational work has been devoted to constructing energy-preserving algorithms, among which the discrete gradient method \cite{itoh1988,gonzalez1996,mclachlan1999geometric} is the most widely used. This method includes the well-known average vector field (AVF) method \cite{avfJcp,avfJPA,avf6} and the discrete variational derivative (DVD) method \cite{furihata1999,furihata2018} for partial differential equations. Additionally, methods such as the Hamiltonian boundary value method (HBVM) \cite{HBVM1,luiji_book}, the energy-preserving collocation method \cite{hairer2010a}, the time finite element method \cite{tangTFE}, and the quadratic auxiliary variable (QAV) method \cite{gongQAV,qavSIAM} have been proposed to construct high-order energy-preserving schemes. Some of these existing energy-preserving algorithms have already been successfully applied to the KGS equations \cite{dgkgs_anm,dgkgs_amm,kgs_hbvms,fujsc}, resulting in a series of stable schemes that conserve the original energy. However, most of these schemes are implicit, which can be computationally expensive for high-dimensional problems.

To improve the computational efficiency of energy-preserving algorithms, several linearly implicit construction methods have been introduced. These methods include the multi-step DVD approach for polynomial energies \cite{matsuo2001, dahlby2011}; the invariant energy quadratization (IEQ) \cite{yang2017c, zhao2017} and scalar auxiliary variable (SAV) \cite{shen2018, shen2019a} methods, both based on energy quadratization frameworks; and several approaches involving the solution of nonlinear algebraic systems, such as the Lagrange multiplier method \cite{cheng2020}, the supplementary variable method (SVM) \cite{svm1, svm2}, and the relaxation method \cite{relaxationLi}. Several of these linearly implicit methods have also been applied to the KGS equations  \cite{fkgs_sav_mcs, kgs_sav_jsc, li2023, li2024}, demonstrating greater efficiency compared to fully implicit schemes.  In addition to the general linearly implicit schemes mentioned above, there are also specific linearly implicit schemes tailored for the KGS equations. For instance, in Refs. \cite{kgs_cifd, li2019a}, the authors developed a decoupled linearly implicit scheme for the KGS equations by applying the implicit midpoint method to the Schr\"odinger part and the leapfrog method to the Klein-Gordon part, with computations performed sequentially. As an improvement, in Refs. \cite{kgs_difd, yang2021}, a fully decoupled linearly implicit scheme was constructed by using the leapfrog method for both parts, allowing for concurrent computation.

Linearly implicit schemes require solving a system of linear equations at each time step, which can be further categorized into two types based on whether the coefficient matrix is constant. The first type, including methods like the DVD and IEQ, as well as the specific decoupled schemes for the KGS equations \cite{kgs_cifd, li2019a,kgs_difd, yang2021}, involves variable-coefficient systems that still require iterative algorithms. The second type, represented by the SAV method, involves constant-coefficient systems, allowing for efficient algorithms like the fast Fourier transform (FFT) with a computational complexity of $\mathcal{O}(N^d \log N)$, with $d$ representing the problem dimension and $N$ representing the number of grid points in each direction. In this paper, we propose a more efficient energy-preserving method for the KGS equations with a computational complexity only of $\mathcal{O}(N^d)$. The essence of this method is a synthesis of our previous work: a variable-based partitioned AVF method for multivariable coupled Hamiltonian systems \cite{pavf}, and a grid-point-based partitioned AVF method for univariate conservative or dissipative systems \cite{cai_cmame}. We therefore refer to this method as the dual-partition AVF method.  Compared to the approaches presented in Refs. \cite{pavf} and \cite{cai_cmame}, the proposed dual-partition AVF method not only retains their decoupling and conservative properties but also offers greater flexibility in the partitioning strategy. For the KGS equations, the resulting schemes allow for explicit calculations without the need to solve linear systems. Furthermore, by employing appropriate partitioning strategies, these schemes facilitate parallel computation on both CPUs and GPUs, significantly improving the computational efficiency of energy conservation algorithms for simulating high-dimensional problems.

The remainder of this paper is organized as follows: In Section 2, we present a semi-discrete energy-preserving scheme for the KGS equations based on the spatial central difference method. In Section 3, we propose a novel dual-partition AVF method that employs a dual partitioning strategy both by variables and grid points for general multivariable coupled Hamiltonian systems. This method is then applied to the KGS equations to obtain fully discrete energy-preserving schemes. Section 4 provides a detailed discussion on the explicit implementation of the proposed scheme, including specific partitioning strategies that enable parallel computation on both CPUs and GPUs. In Section 5, we present extensive numerical experiments to verify the accuracy, conservation properties, and efficiency of our method. Finally, Section 6 offers some concluding remarks.

\section{Energy-preserving spatial discretization}\label{sec:2}
In this section, we introduce an energy-preserving spatial discretization for the KGS equations \eqref{kgs}. To facilitate the application of our proposed AVF-type methods, we first reformulate the KGS equations into an equivalent first-order real-valued system. We then employ a spatial central difference discretization to construct a semi-discrete energy-preserving scheme, which is subsequently rewritten in the form of a standard Hamiltonian system. For simplicity and without loss of generality, we focus on the two-dimensional square domain $\Omega = (a, b)^2$, with the one-dimensional and three-dimensional cases following in a similar manner.

Let $p = {\rm Re} \psi$, $q = {\rm Im} \psi$ and $v = \partial_t u$, where ${\rm Re} \psi$, ${\rm Im} \psi$ represent real part and imaginary part of a complex-valued function $\psi$, respectively. Then, \eqref{kgs} can be recast equivalently as follows:
\begin{equation}\label{kgs-equiv}
	\left\lbrace
	\begin{aligned}
		 & \partial_t p = - \frac{\kappa_1}{2} \Delta q - \gamma u q,      \\
		 & \partial_t q =  \frac{\kappa_1}{2} \Delta p + \gamma u p,       \\
		 & \partial_t u = v,                                               \\
		 & \partial_t v = \kappa_2 \Delta u - \mu^2 u + \gamma(p^2 + q^2).
	\end{aligned}
	\right.
\end{equation}
Taking the inner product on both sides of each equation in \eqref{kgs-equiv} with $\partial_t q$, $\partial_t p$, $\partial_t v$, $\partial_t u$ respectively, and combining the resulting identities, it is readily to confirm that \eqref{kgs-equiv} preserves energy in the sense that
\begin{equation*}
	\begin{aligned}
		 & \frac{d\mathcal{E}(p, q, u, v)}{dt} = 0, \quad                                                                                                                                           \\
		 & \mathcal{E}(p, q, u, v) = \int_\Omega \left[\frac{1}{2} \left(\kappa_1 |\nabla p|^2 + \kappa_1 |\nabla q|^2 + \kappa_2 |\nabla u|^2 + |v|^2 + \mu^2 u^2\right) - \gamma (p^2 + q^2) u \right] d\bm{x}.
	\end{aligned}
\end{equation*}

To present the discretization, we introduce some preliminary notations related to the finite difference method. Let $N$ be a positive integer, and partition the spatial domain uniformly with a step size $h = (b-a)/N$. We define the grid points under periodic boundary condition as $\Omega_h = \{ (x_j, y_k)| x_j = a + (j-1) h, \ y_k = a + (k-1) h, \ 1 \leq j,k \leq N \}$, and the set of periodic grid functions as $\mathbb{V}_h = \{ U| U_{j,k} = U_{j+lN, k+mN}, \ l, m \in \mathbb{Z}, \ 1\leq j,k \leq N   \}$. The discrete inner product and norm on $\mathbb{V}_h$ are
\begin{equation*}
	(U, V)_h = h^2 \sum\limits_{j = 1}^{N} \sum\limits_{k=1}^{N} U_{j,k} \overline{V}_{j,k}, \quad  \|U\|_h = \sqrt{(U, U)_h}.
\end{equation*}
Given $U \in \mathbb{V}_h$, the discrete Laplace operator $\Delta_h : \mathbb{V}_h \to \mathbb{V}_h$ with respect to the central difference method and the discrete gradient operator $\nabla_h: \mathbb{V}_h \to \mathbb{V}_h \times \mathbb{V}_h$  are defined by
\begin{equation*}
	\Delta_h U_{j,k} = \frac{U_{j+1, k} + U_{j-1, k} + U_{j, k+1} + U_{j,k-1} - 4 U_{j,k}}{h^2}, \quad \nabla_h U_{j,k} = \left( \frac{U_{j+1,k} - U_{j,k}}{h}, \frac{U_{j,k+1} - U_{j,k}}{h} \right)^\top.
\end{equation*}
Due to the periodic boundary condition, it is readily to confirm the following summation by parts formula
\begin{equation*}
	- (\Delta_h U, V)_h = (\nabla_h U, \nabla_h V)_h = -(U, \Delta_h V)_h.
\end{equation*}

The semi-discrete system of the KGS equations \eqref{kgs} is to find $P, Q, U, V \in \mathbb{V}_h$, such that
\begin{equation}\label{kgs-semi}
	\left\lbrace
	\begin{aligned}
		 & \dot{P}  = - \frac{\kappa_1}{2} \Delta_h Q - \gamma U Q,                  \\
		 & \dot{Q} =  \frac{\kappa_1}{2} \Delta_h P + \gamma U P,                    \\
		 & \dot{U} = V,                                                              \\
		 & \dot{V} =  \kappa_2 \Delta_h U - \mu^2 U + \gamma \left(Q^2 + P^2\right).
	\end{aligned}
	\right.
\end{equation}
Set $\bm p=(p_1,p_2,\cdots,p_M)^\top$ and $p_i=P_{j,k}$ for $1\leq i\leq M=N^2$ with $i=(j-1)N+k$, $1\leq j\leq N, 1\leq k\leq N$. Define $\bm z=(\bm p, \bm q, \bm u,\bm v)^\top$. Then we can reformulate \eqref{kgs-semi} into a compact Hamiltonian system:
\begin{equation}\label{kgs-semi-compact}
	\dot{\bm z} = \mathcal{S}\nabla \mathcal{E}_h(\bm z).
\end{equation}
Here, $\mathcal{E}_h\left(\bm z\right)$ denotes the semi-discrete energy defined by
\begin{equation*}
	\mathcal{E}_h\left(\bm z\right) = \frac{1}{2} \left(-\kappa_1 \bm p^\top D\bm p -\kappa_1 \bm q^\top D\bm q -\kappa_2 \bm u^\top D\bm u + \bm v^\top \bm v + \mu^2 \bm u^\top\bm u\right) - \gamma (\bm p^2 + \bm q^2)^\top\bm u,
\end{equation*}
where $D=I_N\otimes D_2+D_2\otimes I_N$ with $D_2$ being the second-order differential matrix and $I_N$ being the identity matrix of dimension $N\times N$. $\mathcal{S}$ is a skew-symmetric matrix as follows:
\begin{equation*}
	\mathcal{S} =
	\begin{pmatrix}
		O                  & \frac{1}{2} I_{M} & O      & O     \\
		-\frac{1}{2} I_{M} & O                 & O      & O     \\
		O                  & O                 & O      & I_{M} \\
		O                  & O                 & -I_{M} & O
	\end{pmatrix}.
\end{equation*}

Thanks to \eqref{kgs-semi-compact}, the conservative property of the semi-discrete scheme can be easily derived.
\begin{thm}
	The semi-discrete scheme \eqref{kgs-semi} preserves the energy conservation law
	\begin{equation*}
		\frac{d}{dt} \mathcal{E}_h(\bm z) = 0.
	\end{equation*}
\end{thm}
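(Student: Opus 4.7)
The plan is to exploit the compact Hamiltonian reformulation \eqref{kgs-semi-compact} directly, reducing the theorem to the textbook fact that skew-symmetric evolutions conserve their generating function. Applying the chain rule,
\begin{equation*}
\frac{d}{dt}\mathcal{E}_h(\bm{z}) = \nabla\mathcal{E}_h(\bm{z})^\top \dot{\bm{z}} = \nabla\mathcal{E}_h(\bm{z})^\top \mathcal{S}\, \nabla\mathcal{E}_h(\bm{z}),
\end{equation*}
so it suffices to observe that the block matrix $\mathcal{S}$ displayed above is skew-symmetric: the diagonal blocks vanish, the $(\bm{p},\bm{q})$ block equals $\tfrac{1}{2}I_M$ while the $(\bm{q},\bm{p})$ block equals $-\tfrac{1}{2}I_M$, and analogously for the $(\bm{u},\bm{v})$ pair. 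The bilinear form $\bm{w}^\top \mathcal{S}\bm{w}$ therefore vanishes identically for every $\bm{w}$, and energy conservation follows.

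The only bookkeeping that deserves attention is the passage from the grid-function system \eqref{kgs-semi} to the vectorized form \eqref{kgs-semi-compact}. I would verify block-by-block that $\nabla\mathcal{E}_h(\bm{z})$ together with $\mathcal{S}$ recovers the right-hand side of \eqref{kgs-semi}. Using the symmetry of the finite-difference matrix $D$, one obtains (with products understood componentwise, as in the definition of $\mathcal{E}_h$) the gradients $\partial_{\bm{p}}\mathcal{E}_h = -\kappa_1 D\bm{p} - 2\gamma\,\bm{u}\bm{p}$, $\partial_{\bm{q}}\mathcal{E}_h = -\kappa_1 D\bm{q} - 2\gamma\,\bm{u}\bm{q}$, $\partial_{\bm{u}}\mathcal{E}_h = -\kappa_2 D\bm{u} + \mu^2\bm{u} - \gamma(\bm{p}^2+\bm{q}^2)$, and $\partial_{\bm{v}}\mathcal{E}_h = \bm{v}$. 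Multiplying by $\mathcal{S}$ then reproduces exactly the four equations of \eqref{kgs-semi}; the factor $\tfrac{1}{2}$ in the upper block of $\mathcal{S}$ absorbs the $2$ produced when differentiating the quadratic forms $\bm{p}^\top D\bm{p}$ and $\bm{q}^\top D\bm{q}$.

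I expect no genuine obstacle here: the statement is essentially a one-liner once the Hamiltonian structure is in hand. The only place where errors tend to creep in is the sign- and factor-tracking during the block-level identification of $\nabla\mathcal{E}_h$, which I would carry out once carefully and then invoke the general principle that skew-symmetric ODEs preserve their Hamiltonian. As an alternative route, one can avoid the vectorization altogether and instead mimic the continuous-case argument, taking the discrete inner products of the four equations in \eqref{kgs-semi} with $\dot{Q}$, $\dot{P}$, $\dot{V}$, $\dot{U}$ respectively, using the discrete summation-by-parts identity $-(\Delta_h U,V)_h=(\nabla_h U,\nabla_h V)_h$ to convert the Laplacian terms into time derivatives of $\tfrac{1}{2}\kappa_1\|\nabla_h P\|_h^2$, $\tfrac{1}{2}\kappa_1\|\nabla_h Q\|_h^2$, and $\tfrac{1}{2}\kappa_2\|\nabla_h U\|_h^2$, while the nonlinear couplings cancel by design; summing the four identities then yields $\tfrac{d}{dt}\mathcal{E}_h = 0$. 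Either route is short; I would present the Hamiltonian-structure proof for its transparency.
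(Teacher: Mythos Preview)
Your proposal is correct and follows essentially the same approach as the paper: take the inner product of \eqref{kgs-semi-compact} with $\nabla\mathcal{E}_h(\bm{z})$ and invoke the skew-symmetry of $\mathcal{S}$. The paper's proof is the one-liner you anticipated; your additional block-by-block verification of $\nabla\mathcal{E}_h$ and the alternative summation-by-parts route are extra but unnecessary.
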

\begin{proof}
	Taking the inner product on both sides of \eqref{kgs-semi-compact} with $\nabla \mathcal{E}_h(\bm z)$ and utilizing the skew-symmetry of $\mathcal{S}$, we have
	\begin{equation*}
		\frac{d}{dt} \mathcal{E}_h(\bm z) = \nabla \mathcal{E}_h(\bm z)^\top \dot{\bm z}  =  \nabla \mathcal{E}_h(\bm z)^\top \mathcal{S}\nabla \mathcal{E}_h(\bm z)  = 0.
	\end{equation*}
	The proof is thus completed.
\end{proof}

\section{Dual-partition AVF method for the KGS equations}\label{sec:3}

In this section, we present fully discrete energy conservative schemes for the KGS equations. First, we provide a detailed description of the novel dual-partition AVF method that employs a dual partitioning strategy based on both variables and grid points, and we prove its energy conservation property. Subsequently, we apply this method to the KGS equations and discuss the impact of different partitioning strategies on the resulting schemes.

\subsection{Dual-partition AVF methods}\label{subs:3.1}

We begin with a brief overview of the AVF-type methods, including the original AVF method, the variable-based partitioned AVF method, and the grid-point-based partitioned AVF method. Following this, we introduce the dual-partition AVF method and demonstrate its energy conservation property.

Consider the following multivariate Hamiltonian system
\begin{equation}\label{ham}
	\dot{\bm Y}=\bm J\nabla H(\bm Y),
\end{equation}
where $\bm Y=(\bm y_1, \bm y_2, \cdots, \bm y_m)^\top$, $\bm y_l\in\mathbb{R}^M$ for $l=1,2,\cdots,m$. $\bm J$ is a constant skew-symmetric matrix. The original AVF method for the system \eqref{ham} yields
\begin{equation}\label{avf}
	\frac{\widehat{\bm Y}-\bm Y}{\tau}=\bm J\overline{\nabla}^{\text{avf}}H(\widehat{\bm Y},\bm Y):=\bm J\int_0^1\nabla H\big(\bm Y^\theta\big)\rm d\theta,
\end{equation}
where ${\bm Y}^\theta=\theta\widehat{\bm Y}+(1-\theta)\bm Y$, $\tau$ is the time step size, $\overline{\nabla}^{\rm avf}H(\widehat{\bm Y},\bm Y)$ represents a discrete gradient, and for the AVF method the mean-value discrete gradient is applied. One can easily verify that
\[
	\begin{aligned}
		H(\widehat{\bm Y})-H(\bm Y) & =\int_0^1\frac{\rm d}{\rm d\theta}H\big(\bm Y^\theta\big){\rm d\theta}=\int_0^1\nabla H\big(\bm Y^\theta\big){\rm d}\theta\cdot (\widehat{\bm Y}-\bm Y).
	\end{aligned}
\]
Substituting \eqref{avf} and using the skew-symmetric property of $\bm J$, we have $H(\widehat{\bm Y})=H(\bm Y)$, that is, the AVF method conserves the Hamiltonian or energy.

Note that the discrete gradient of the AVF method with respect to the $ l $-th variable is defined as
\begin{equation}\label{avf-dg}
	\Big[\overline{\nabla}^{\text{avf}} H(\widehat{\bm{Y}}, \bm{Y})\Big]_l = \int_0^1 \nabla_{\bm{y}_l} H\big( \bm{y}_1^\theta, \bm{y}_2^\theta, \cdots, \bm{y}_m^\theta \big) \, d\theta,
\end{equation}
which involves all the unknown variables $\widehat{\bm{y}}_l$ for $l = 1, 2, \ldots, m$. Consequently, the schemes resulting from the AVF method are typically fully implicit, which may lead to high computational costs.

To enhance computational efficiency, we introduced the variable-based partitioned AVF (VP-AVF) method \cite{pavf,epavf}, where the corresponding discrete gradient is given by
\begin{equation*}
	\Big[\overline{\nabla}^{\text{var}} H(\widehat{\bm{Y}}, \bm{Y})\Big]_{\widetilde{l}} = \int_0^1 \nabla_{\bm{y}_{\widetilde{l}}} H\big( \widehat{\bm{y}}_{\widetilde{1}}, \cdots, \widehat{\bm{y}}_{\widetilde{l-1}}, \bm{y}_{\widetilde{l}}^\theta, \bm{y}_{\widetilde{l+1}}, \cdots, \bm{y}_{\widetilde{m}} \big) \, d\theta,
\end{equation*}
where $(\widetilde{1}, \widetilde{2}, \ldots, \widetilde{m})$ is a permutation of $(1, 2, \ldots, m)$. Unlike the AVF discrete gradient \eqref{avf-dg}, which is obtained by averaging $\nabla_{\bm{y}_l} H$ along the segment connecting $\bm{Y}$ and $\widehat{\bm{Y}}$, the discrete gradient of the VP-AVF method averages along a piecewise linear path, with each segment being parallel to one of the variables. The piecewise linear path is uniquely determined by the ordering $(\widetilde{1}, \widetilde{2}, \ldots, \widetilde{m})$, so each ordering corresponds to a VP-AVF scheme. By appropriately ordering the updates of variables, we can construct a variable-wise decoupled VP-AVF scheme for multivariate Hamiltonian systems \eqref{ham}. However, when the systems involve only a single variable, the VP-AVF method is ineffective.

\begin{rmk}
	Here, the perturbed energy should be denoted as $\widetilde{H}({\bm{y}}_{\widetilde{1}}, \cdots, {\bm{y}}_{\widetilde{l-1}}, {\bm{y}}_{\widetilde{l}}, \bm{y}_{\widetilde{l+1}}, \cdots, \bm{y}_{\widetilde{m}})$. However, in the absence of ambiguity and for the sake of notational convenience, we continue to use the notation $H$ to represent the energy after the permutation of variables. 
\end{rmk}

To address this limitation, we proposed the grid-point-based AVF (GP-AVF) method for univariate conservative or dissipative systems \cite{cai_cmame}, where the discrete gradient with respect to the $i$-th grid point is defined as
\begin{equation}\label{grd-dg}
	\Big[\overline{\nabla}^{\text{grd}} H(\widehat{\bm{y}}_1, \bm{y}_1)\Big]_{\widetilde{i}} = \int_0^1 \nabla_{y_{1,\widetilde{i}}} H\big( \widehat{y}_{1,\widetilde{1}}, \cdots, \widehat{y}_{1,\widetilde{i-1}},  y_{1,\widetilde{i}}^\theta, y_{1,\widetilde{i+1}}, \cdots, y_{1,\widetilde{M}} \big) \, d\theta.
\end{equation}
Here, the single variable is denoted by $\bm{y}_1$ and $\widetilde{i}$ is the permuted index of the grid point. The two subscripts in $y_{1,\widetilde{i}}$ denotes the $\widetilde{i}$-th component of the variable $\bm{y}_1$. For notational convenience, we use a single linear index $\widetilde{i}$, where ${i} = {1}, {2}, \ldots, {M}$, to represent the positions of the grid points after permutation. The resulting GP-AVF schemes enable pointwise decoupled computation, avoiding the need to solve large linear or nonlinear systems. Instead, a simple loop from 1 to $M$ is executed, solving only a scalar nonlinear equation in each iteration. Furthermore, with specific updating strategies, these schemes are well-suited for parallel computation.

In this paper, we propose the dual-partition AVF (DP-AVF) method for multivariable coupled Hamiltonian systems by combining the above variable-based and grid-point-based partitioning strategies. The general form of the DP-AVF method is given by
\begin{equation}\label{dp-avfm}
	\frac{\widehat{\bm Y}-\bm Y}{\tau}=\bm J\overline{\nabla}^{\text{dp}}H(\widehat{\bm Y},\bm Y),
\end{equation}
where the discrete analog of the gradient with respect to $y_{\widetilde{l},\widetilde{i}}$ is defined as
\begin{equation}\label{dp-avf}
	\begin{aligned}
		  & \Big[\overline{\nabla}^{\text{dp}}H(\widehat{\bm Y},\bm Y)\Big]_{\widetilde{l},\widetilde{i}}                                                                                                                                                                                                                                                                                                                                        \\
		= & \Big[\overline{\nabla}^{\text{grd}}_{\bm{y}_{\widetilde{l}}} H\left(\widehat{\bm{y}_{\widetilde{l}}},\, \bm{y}_{\widetilde{l}}\,\big|\,\widehat{\bm{y}}_{\widetilde{1}}, \cdots, \widehat{\bm{y}}_{\widetilde{l-1}},  \bm{y}_{\widetilde{l+1}}, \cdots, \bm{y}_{\widetilde{m}}\right)\Big]_{\widetilde{i}}                                                                                                                               \\
		= & \int_0^1\nabla_{y_{\widetilde{l},\widetilde{i}}} H\left(\widehat{y}_{\widetilde{l},\widetilde{1}},\cdots,\widehat{y}_{\widetilde{l},\widetilde{i-1}}, y_{\widetilde{l},\widetilde{i}}^\theta, {y}_{\widetilde{l},\widetilde{i+1}},\cdots, y_{\widetilde{l},\widetilde{M}}\,\big|\,\widehat{\bm y}_{\widetilde{1}}, \cdots, \widehat{\bm y}_{\widetilde{l-1}}, \bm y_{\widetilde{l+1}},\cdots, \bm y_{\widetilde{m}}\right){\rm d}\theta.
	\end{aligned}
\end{equation}
It is easy to see that the dual-partition discrete gradient defined in \eqref{dp-avf} combines the variable-based and grid-point-based partitioning strategies from the VP-AVF and GP-AVF methods. As a result, it offers greater flexibility compared to either method individually.

Next, we will demonstrate that the above-defined DP-AVF method \eqref{dp-avfm} preserves the energy. To begin, we verify that $\overline{\nabla}^{\text{dp}} H(\widehat{\bm{Y}}, \bm{Y})$ is indeed a discrete gradient.
\begin{lem}\label{lem1}
	$\overline{\nabla}^{\text{dp}} H(\widehat{\bm{Y}}, \bm{Y})$ defined by \eqref{dp-avf} is a discrete gradient of $\nabla H(\bm{Y})$.
\end{lem}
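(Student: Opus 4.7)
The plan is to verify the two defining properties of a discrete gradient for $\overline{\nabla}^{\text{dp}} H(\widehat{\bm Y}, \bm Y)$: the discrete chain rule identity
\[
H(\widehat{\bm Y}) - H(\bm Y) \;=\; \overline{\nabla}^{\text{dp}} H(\widehat{\bm Y}, \bm Y)^{\!\top}(\widehat{\bm Y} - \bm Y),
\]
and the consistency property $\overline{\nabla}^{\text{dp}} H(\bm Y, \bm Y) = \nabla H(\bm Y)$.

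For the chain-rule identity, I would introduce a piecewise-linear path from $\bm Y$ to $\widehat{\bm Y}$ that updates one scalar component $y_{\widetilde{l},\widetilde{i}}$ at a time, following the lexicographic order first in the variable index $\widetilde{l}=\widetilde{1},\ldots,\widetilde{m}$ and then in the grid-point index $\widetilde{i}=\widetilde{1},\ldots,\widetilde{M}$. Concretely, define intermediate states
\[
\bm Y^{(\widetilde{l},\widetilde{i},\theta)} = \left(\widehat{\bm y}_{\widetilde{1}},\ldots,\widehat{\bm y}_{\widetilde{l-1}},\;
\bigl(\widehat{y}_{\widetilde{l},\widetilde{1}},\ldots,\widehat{y}_{\widetilde{l},\widetilde{i-1}},\, y_{\widetilde{l},\widetilde{i}}^\theta,\, y_{\widetilde{l},\widetilde{i+1}},\ldots,y_{\widetilde{l},\widetilde{M}}\bigr),\;
\bm y_{\widetilde{l+1}},\ldots,\bm y_{\widetilde{m}}\right),
\]
with $y_{\widetilde{l},\widetilde{i}}^\theta = \theta \widehat{y}_{\widetilde{l},\widetilde{i}} + (1-\theta) y_{\widetilde{l},\widetilde{i}}$. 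Along each elementary segment $\theta \in [0,1]$, only the scalar $y_{\widetilde{l},\widetilde{i}}$ varies, so the one-dimensional fundamental theorem of calculus gives
\[
H\bigl(\bm Y^{(\widetilde{l},\widetilde{i},1)}\bigr) - H\bigl(\bm Y^{(\widetilde{l},\widetilde{i},0)}\bigr)
= \Bigl(\widehat{y}_{\widetilde{l},\widetilde{i}} - y_{\widetilde{l},\widetilde{i}}\Bigr)\int_0^1 \nabla_{y_{\widetilde{l},\widetilde{i}}} H\bigl(\bm Y^{(\widetilde{l},\widetilde{i},\theta)}\bigr)\,d\theta,
\]
and the integral on the right is precisely $\bigl[\overline{\nabla}^{\text{dp}} H(\widehat{\bm Y},\bm Y)\bigr]_{\widetilde{l},\widetilde{i}}$ by the definition in \eqref{dp-avf}. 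Summing over all pairs $(\widetilde{l},\widetilde{i})$, the left-hand side telescopes to $H(\widehat{\bm Y}) - H(\bm Y)$, while the right-hand side assembles into $\overline{\nabla}^{\text{dp}} H(\widehat{\bm Y},\bm Y)^{\!\top}(\widehat{\bm Y} - \bm Y)$, which is the desired identity.

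For the consistency property, when $\widehat{\bm Y} = \bm Y$ the intermediate states $\bm Y^{(\widetilde{l},\widetilde{i},\theta)}$ collapse to $\bm Y$ for every choice of indices and every $\theta \in [0,1]$, so the integrand in \eqref{dp-avf} is constant in $\theta$ and equals $\nabla_{y_{\widetilde{l},\widetilde{i}}} H(\bm Y)$. Hence the $(\widetilde{l},\widetilde{i})$-component of $\overline{\nabla}^{\text{dp}} H(\bm Y,\bm Y)$ coincides with the corresponding partial derivative of $H$, giving $\overline{\nabla}^{\text{dp}} H(\bm Y,\bm Y) = \nabla H(\bm Y)$.

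The routine parts are the telescoping and the consistency check. The only real pitfall is the bookkeeping: one must be careful that the argument list in \eqref{dp-avf} matches exactly the endpoint configurations of the piecewise-linear path, so that consecutive segments share a common endpoint and the telescoping is clean. As long as the convention ``already-updated variable blocks on the left, not-yet-updated ones on the right, mirrored at the grid-point level within the active block'' is applied consistently, the identity follows directly without any further computation.
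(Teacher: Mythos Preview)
Your proposal is correct and follows essentially the same approach as the paper's proof: both verify the discrete chain rule by recognizing that each component of $\overline{\nabla}^{\text{dp}} H$ is the integral of the corresponding partial derivative along a segment of a piecewise-linear path, so that the double sum over $(\widetilde{l},\widetilde{i})$ telescopes first in the grid-point index and then in the variable index to yield $H(\widehat{\bm Y})-H(\bm Y)$, and both dispatch the consistency condition $\overline{\nabla}^{\text{dp}} H(\bm Y,\bm Y)=\nabla H(\bm Y)$ as immediate. Your explicit introduction of the intermediate states $\bm Y^{(\widetilde{l},\widetilde{i},\theta)}$ makes the telescoping structure slightly more transparent, but the argument is the same.
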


\begin{prf}
	By direct calculation, we can obtain
	\[
		\begin{aligned}
			  & \overline{\nabla}^{\text{dp}}H(\widehat{\bm Y},\bm Y)\cdot (\widehat{\bm Y}-\bm Y)                                                                                                                                                                                                                                                                                                                      \\
			= & \sum_{{l}=1}^{m}\sum_{{i}=1}^{M}\Big[\overline{\nabla}^{\text{dp}}H(\widehat{\bm Y},\bm Y)\Big]_{\widetilde{l},\widetilde{i}}\cdot (\widehat{y}_{\widetilde{l},\widetilde{i}}-y_{\widetilde{l},\widetilde{i}})                                                                                                                                                                                          \\
			= & \sum_{{l}=1}^{m}\Bigg[\sum_{{i}=1}^{M}H\big(\widehat{\bm y}_{\widetilde{1}}, \cdots, \widehat{\bm y}_{\widetilde{l-1}}, \widehat{y}_{\widetilde{l},\widetilde{1}},\cdots,\widehat{y}_{\widetilde{l},\widetilde{i-1}}, \widehat{y}_{\widetilde{l},\widetilde{i}}, {y}_{\widetilde{l},\widetilde{i+1}},\cdots, y_{\widetilde{l},\widetilde{M}}, \bm y_{\widetilde{l+1}}\cdots, \bm y_{\widetilde{m}}\big) \\
			  & \qquad\quad -\sum_{{i}=1}^{M}H\big(\widehat{\bm y}_{\widetilde{1}}, \cdots, \widehat{\bm y}_{\widetilde{l-1}}, \widehat{y}_{\widetilde{l},\widetilde{1}},\cdots, \widehat{y}_{\widetilde{l}, \widetilde{i-1}}, y_{\widetilde{l},\widetilde{i}}, {y}_{\widetilde{l},\widetilde{i+1}},\cdots, y_{\widetilde{l},\widetilde{N}}, \bm y_{\widetilde{l+1}}\cdots, \bm y_{\widetilde{m}}\big)\Bigg]            \\
			= & \sum_{{l}=1}^{m}\Big[H\big(\widehat{\bm y}_{\widetilde{1}}, \cdots, \widehat{\bm y}_{{\widetilde{l-1}}}, \widehat{\bm y}_{\widetilde{l}},\bm y_{\widetilde{l+1}}\cdots, \bm y_{\widetilde{m}}\big)-H\big(\widehat{\bm y}_{\widetilde{1}}, \cdots, \widehat{\bm y}_{{\widetilde{l-1}}}, \bm y_{\widetilde{l}},\bm y_{\widetilde{l+1}}\cdots, \bm y_{\widetilde{m}}\big)\Big]                             \\
			= & H\big(\widehat{\bm y}_{\widetilde{1}}, \widehat{\bm y}_{\widetilde{2}}, \cdots \widehat{\bm y}_{\widetilde{m}}\big)-H\big(\bm y_{\widetilde{1}},  \bm y_{\widetilde{2}}, \cdots, \bm y_{\widetilde{m}}\big)                                                                                                                                                                                             \\
			= & H(\widehat{\bm Y})-H(\bm Y).
		\end{aligned}
	\]
	In addition, it is apparently that $\overline{\nabla}^{\text{dp}}H({\bm Y},\bm Y)=\nabla H({\bm Y},\bm Y)$. Thus, following the definition of a discrete gradient \cite{mclachlan1999geometric}, we finish the proof.\qed

\end{prf}

\begin{thm}\label{thm1}
	The DP-AVF method \eqref{dp-avfm} preserves the energy, i.e., $H(\widehat{\bm Y})=H(\bm Y)$.
\end{thm}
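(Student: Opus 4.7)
The plan is to imitate the classical argument that converts any valid discrete gradient into an energy-conserving scheme when paired with a constant skew-symmetric structure matrix. All the genuine combinatorial work, namely verifying the telescoping along the piecewise linear path dictated by the dual (variable, grid-point) permutation, has already been carried out in Lemma \ref{lem1}. What remains is essentially a two-line contraction with $\bm J$.

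Concretely, I would start from the defining relation \eqref{dp-avfm} of the DP-AVF method and take the Euclidean inner product of both sides with $\overline{\nabla}^{\text{dp}}H(\widehat{\bm Y},\bm Y)$. The right-hand side becomes
\begin{equation*}
\tau\, \overline{\nabla}^{\text{dp}}H(\widehat{\bm Y},\bm Y)^\top \bm J \,\overline{\nabla}^{\text{dp}}H(\widehat{\bm Y},\bm Y),
\end{equation*}
which vanishes identically because $\bm J$ is skew-symmetric (any quadratic form associated with a skew-symmetric matrix is zero). On the left-hand side, Lemma \ref{lem1} tells us that
\begin{equation*}
\overline{\nabla}^{\text{dp}}H(\widehat{\bm Y},\bm Y)^\top \bigl(\widehat{\bm Y}-\bm Y\bigr) = H(\widehat{\bm Y}) - H(\bm Y).
\end{equation*}
Equating the two sides yields $H(\widehat{\bm Y}) = H(\bm Y)$, which is the claim.

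There is no real obstacle in this theorem itself: the scheme \eqref{dp-avfm} was deliberately built as a discrete-gradient method, and the energy-conservation proof reduces to the generic recipe (skew-symmetry kills the right-hand side, the discrete-gradient identity reads off the left-hand side). The only subtle point, which is already handled upstream, is that the dual partition makes the ordered sum in the proof of Lemma \ref{lem1} telescope correctly across both the variable index $\widetilde{l}$ and the grid-point index $\widetilde{i}$; without that telescoping one could not legitimately collapse the perturbed intermediate energies into the difference $H(\widehat{\bm Y})-H(\bm Y)$. Since Lemma \ref{lem1} is established, the proof of Theorem \ref{thm1} is a direct corollary and its conclusion is independent of the particular permutations $(\widetilde{1},\dots,\widetilde{m})$ of variables and $(\widetilde{1},\dots,\widetilde{M})$ of grid points used to define the scheme.
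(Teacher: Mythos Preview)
Your proposal is correct and matches the paper's proof essentially line for line: the paper also invokes Lemma \ref{lem1} to write $H(\widehat{\bm Y})-H(\bm Y)=\overline{\nabla}^{\text{dp}}H(\widehat{\bm Y},\bm Y)\cdot(\widehat{\bm Y}-\bm Y)$, substitutes \eqref{dp-avfm}, and uses the skew-symmetry of $\bm J$ to conclude. Your additional remarks on where the real work lies (the telescoping in Lemma \ref{lem1}) and on permutation-independence are accurate and consistent with the paper's perspective.
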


\begin{prf}
	According to Lemma \ref{lem1}, we have
	\[
		H(\widehat{\bm Y})-H(\bm Y)=\overline{\nabla}^{\text{dp}}H(\widehat{\bm Y},\bm Y)\cdot (\widehat{\bm Y}-\bm Y)=\tau \overline{\nabla}^{\text{dp}}H(\widehat{\bm Y},\bm Y)^\top\bm J\overline{\nabla}^{dp}H(\widehat{\bm Y},\bm Y)=0.
	\]
\end{prf}

\begin{rmk}\label{rmk1}
	Although each partitioning strategy yields an energy-preserving DP-AVF method, the key to effectively utilizing the DP-AVF method lies in selecting a partitioning approach that enhances computational efficiency. We first ensure that the method obtained after partitioning by variables is efficient, such as being linearly implicit. Then, we further partition by grid points to achieve additional efficiency, such as enabling parallel computation. In this paper, we employ this strategy to construct efficient energy-preserving algorithms for the KGS equations.
\end{rmk}

\begin{rmk}

	Although the discrete gradient \eqref{dp-avf} is based on the perturbed indices, when actually constructing the DP-AVF method \eqref{dp-avfm}, the discrete gradient on the right-hand side, $\overline{\nabla}^{\text{dp}}H(\widehat{\bm Y},\bm Y)$, is still arranged according to the ascending order of variables and grid points. Different partitioning strategies only affect the update order of variables and grid points and do not introduce additional complexity to the construction of the schemes.

\end{rmk}

By using Taylor expansion, it is straightforward to verify that the DP-AVF method \eqref{dp-avfm} is, in fact, a first-order method. To construct higher-order schemes, we can combine the DP-AVF method with its adjoint method. In this work, we focus on constructing a second-order method. The adjoint method of the DP-AVF method \eqref{dp-avfm} is given by:
\begin{equation}\label{dp-avfm-adj}
	\frac{\widehat{\bm Y}-\bm Y}{\tau}=\bm J\overline{\nabla}^{\text{dp},*}H(\widehat{\bm Y},\bm Y),
\end{equation}
where $\overline{\nabla}^{\text{dp},*}H(\widehat{\bm Y},\bm Y)$ is defined as follows
\begin{equation*}
	\begin{aligned}
		  & \Big[\overline{\nabla}^{\text{dp},*}H(\widehat{\bm Y},\bm Y)\Big]_{\widetilde{l},\widetilde{i}}                                                                                                                                                                                                                                                                                                                                                      \\
		= & \Big[\overline{\nabla}^{\text{grd},*}_{\bm{y}_{\widetilde{l}}} H\left(\widehat{\bm{y}_{\widetilde{l}}},\, \bm{y}_{\widetilde{l}}\,\big|\,{\bm{y}}_{\widetilde{1}}, \cdots, {\bm{y}}_{\widetilde{l-1}},  \widehat{\bm{y}}_{\widetilde{l+1}}, \cdots, \widehat{\bm{y}}_{\widetilde{m}}\right)\Big]_{\widetilde{i}}                                                                                                                                         \\
		= & \int_0^1\nabla_{y_{\widetilde{l},\widetilde{i}}} H\left({y}_{\widetilde{l},\widetilde{1}},\cdots,{y}_{\widetilde{l},\widetilde{i-1}}, \widehat{y}_{\widetilde{l},\widetilde{i}}^\theta, \widehat{y}_{\widetilde{l},\widetilde{i+1}},\cdots, \widehat{y}_{\widetilde{l},\widetilde{M}}\,\big|\,{\bm y}_{\widetilde{1}}, \cdots, {\bm y}_{\widetilde{l-1}}, \widehat{\bm y}_{\widetilde{l+1}},\cdots, \widehat{\bm y}_{\widetilde{m}}\right){\rm d}\theta.
	\end{aligned}
\end{equation*}

%
Let the DP-AVF method \eqref{dp-avfm} be denoted as $\widehat{\bm{Y}} = \Phi_\tau(\bm{Y})$, and its adjoint method \eqref{dp-avfm-adj} as $\widehat{\bm{Y}} = \Phi^*_\tau(\bm{Y})$. By applying a symmetric composition, the second-order method can be constructed as
\begin{equation}\label{dp-comp}
	\widehat{\bm Y}=\Phi^*_\frac{\tau}{2}\circ\Phi_\frac{\tau}{2}({\bm Y}).
\end{equation}

\begin{thm}
	The second-order composition method \eqref{dp-comp} also preserves the energy, i.e., $H(\widehat{\bm Y})=H(\bm Y)$.
\end{thm}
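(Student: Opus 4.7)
The plan is to reduce the claim to the fact that a composition of energy-preserving maps is energy-preserving, so all the work lies in showing that the adjoint map $\Phi^*_\tau$ is itself energy-preserving. Granted that, the conclusion is immediate: setting $\widetilde{\bm Y}=\Phi_{\tau/2}(\bm Y)$, I would write
\[
H(\widehat{\bm Y})=H\bigl(\Phi^*_{\tau/2}(\widetilde{\bm Y})\bigr)=H(\widetilde{\bm Y})=H\bigl(\Phi_{\tau/2}(\bm Y)\bigr)=H(\bm Y),
\]
where the inner equality uses Theorem \ref{thm1} for the first half-step and the outer equality uses the energy conservation of $\Phi^*_{\tau/2}$ for the second half-step.

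So the real content is establishing that $\Phi^*_\tau$ preserves $H$. I would do this by mirroring the proof of Lemma \ref{lem1} and Theorem \ref{thm1}. First, I would check that $\overline{\nabla}^{\text{dp},*}H(\widehat{\bm Y},\bm Y)$ is a discrete gradient in the sense of McLachlan--Quispel--Robidoux: the consistency property $\overline{\nabla}^{\text{dp},*}H(\bm Y,\bm Y)=\nabla H(\bm Y)$ is immediate from the definition, and the fundamental identity
\[
\overline{\nabla}^{\text{dp},*}H(\widehat{\bm Y},\bm Y)\cdot(\widehat{\bm Y}-\bm Y)=H(\widehat{\bm Y})-H(\bm Y)
\]
follows from the same telescoping argument as in Lemma \ref{lem1}, with the only change being that the outer sum over $l$ is now telescoped in the reverse order $\widetilde{m},\widetilde{m-1},\ldots,\widetilde{1}$ (because in the adjoint the ``already updated'' variables are those with indices larger than $\widetilde{l}$), and the inner sum over $i$ is similarly reordered. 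Once this identity is in hand, contracting \eqref{dp-avfm-adj} with $\overline{\nabla}^{\text{dp},*}H(\widehat{\bm Y},\bm Y)$ and invoking the skew-symmetry of $\bm J$ gives $H(\widehat{\bm Y})=H(\bm Y)$ exactly as in the proof of Theorem \ref{thm1}.

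The step I expect to require the most care is the telescoping bookkeeping for the adjoint discrete gradient: one must verify that the endpoints $\widehat{y}_{\widetilde{l},\widetilde{i}}^\theta$ and the placement of hats on the ``other-variable'' arguments $\widehat{\bm y}_{\widetilde{l+1}},\ldots,\widehat{\bm y}_{\widetilde{m}}$ are consistent with running the variable-by-variable and grid-point-by-grid-point telescoping in reversed order, so that each intermediate term cancels with the next and only $H(\widehat{\bm Y})-H(\bm Y)$ survives. Once this is confirmed, everything else is purely formal and the two-line composition argument above closes the proof.
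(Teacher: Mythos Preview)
Your proposal is correct and follows essentially the same approach as the paper: the paper's proof simply notes that any composition of energy-preserving methods (for the same discrete energy) is energy-preserving, leaving implicit the fact that $\Phi^*_\tau$ is itself energy-preserving. You have spelled out that implicit step in more detail than the paper does, but the route is the same.
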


\begin{prf}
	The proof is straightforward since any composition of energy-preserving methods that preserve the same discrete energy remains energy-preserving.
\end{prf}

\subsection{Energy-preserving integrators for the KGS equations}

According to Remark \ref{rmk1}, constructing an efficient energy-preserving DP-AVF scheme for the KGS equations involves two steps. In the first step, an appropriate variable-based partitioning strategy is identified, ensuring that the corresponding VP-AVF scheme exhibits efficiency, such as being linearly implicit. It is important to note that the actual construction of the VP-AVF scheme is not required; only the determination of an efficient variable-based partitioning strategy is needed. In the second step, each partitioned variable is further partitioned by grid points to enable pointwise computation and even parallel computation.

\subsubsection{Partition by variables}

For the semi-discrete KGS equation \eqref{kgs-semi-compact}, there are four variables, $(\bm{p}, \bm{q}, \bm{u}, \bm{v})$, resulting in 24 possible orderings. However, not all of these orderings lead to an efficient VP-AVF scheme; we illustrate the approach using two specific orderings as examples. For simplicity, we use the notation $\bm{p} > \bm{q}$ to indicate that the variable $\bm{p}$ is updated before $\bm{q}$.

	{\bf Case I:} $\bm p>\bm q>\bm u>\bm v$. The resulting variable-based discrete gradient $\overline{\nabla}^{\text{var}}\mathcal{E}_h(\bm z^{n+1},\bm z^{n})$ is defined by
\begin{equation*}
	\overline{\nabla}^{\text{var}}\mathcal{E}_h(\bm z^{n+1},\bm z^{n})=\left(\begin{aligned}
			 & \int_0^1{\nabla}_{\bm p}\mathcal{E}_h(\bm p^{\theta},\bm q^n,\bm u^n,\bm v^n)d\theta             \\
			 & \int_0^1{\nabla}_{\bm q}\mathcal{E}_h(\bm p^{n+1},\bm q^{\theta},\bm u^n,\bm v^n)d\theta         \\
			 & \int_0^1{\nabla}_{\bm u}\mathcal{E}_h(\bm p^{n+1},\bm q^{n+1},\bm u^{\theta},\bm v^n)d\theta     \\
			 & \int_0^1{\nabla}_{\bm v}\mathcal{E}_h(\bm p^{n+1},\bm q^{n+1},\bm u^{n+1},\bm v^{\theta})d\theta
		\end{aligned}\right),
\end{equation*}
and the corresponding VP-AVF scheme yields
\begin{subequations}\label{vp-avf1}
	\begin{align}[left={\empheqlbrace}]\label{vp-a}
		\delta_t^+\bm p^{n} & =-\frac{\kappa_1}{2} D \bm q^{n+1/2} - \gamma \bm u^n \bm q^{n+1/2},                                     \\\label{vp-b}
		\delta_t^+\bm q^{n} & =\frac{\kappa_1}{2} D \bm p^{n+1/2} + \gamma \bm u^n \bm p^{n+1/2},                                      \\\label{vp-c}
		\delta_t^+\bm u^{n} & =\bm v^{n+1/2},                                                                                          \\\label{vp-d}
		\delta_t^+\bm v^{n} & =\kappa_2 D \bm u^{n+1/2} - \mu^2 \bm u^{n+1/2} + \gamma \left((\bm q^{n+1})^2 + (\bm p^{n+1})^2\right),
	\end{align}
\end{subequations}
where $\delta_t^+\bm p^n=(\bm p^{n+1}-\bm p^n)/\tau$, $\bm p^{n+1/2}=(\bm p^{n+1}+\bm p^n)/2$ and so on. It is easy to observe that the VP-AVF scheme \eqref{vp-avf1} consists of two linear implicit subsystems. By solving the linear system derived from subsystems \eqref{vp-a} and \eqref{vp-b}, we can obtain $\bm{p}^{n+1}$ and $\bm{q}^{n+1}$. Substituting these values into subsystems \eqref{vp-c} and \eqref{vp-d}, we can similarly obtain $\bm{u}^{n+1}$ and $\bm{v}^{n+1}$ by solving the corresponding linear system.

	{\bf Case II:} $\bm p>\bm u>\bm q>\bm v$. The corresponding variable-based discrete gradient $\overline{\nabla}^{\text{var}}\mathcal{E}_h(\bm z^{n+1},\bm z^{n})$ now becomes
\begin{equation*}
	\overline{\nabla}^{\text{var}}\mathcal{E}_h(\bm z^{n+1},\bm z^{n})=\left(\begin{aligned}
			 & \int_0^1{\nabla}_{\bm p}\mathcal{E}_h(\bm p^{\theta},\bm q^n,\bm u^n,\bm v^n)d\theta             \\
			 & \int_0^1{\nabla}_{\bm q}\mathcal{E}_h(\bm p^{n+1},\bm q^{\theta},\bm u^{n+1},\bm v^n)d\theta     \\
			 & \int_0^1{\nabla}_{\bm u}\mathcal{E}_h(\bm p^{n+1},\bm q^{n},\bm u^{\theta},\bm v^n)d\theta       \\
			 & \int_0^1{\nabla}_{\bm v}\mathcal{E}_h(\bm p^{n+1},\bm q^{n+1},\bm u^{n+1},\bm v^{\theta})d\theta
		\end{aligned}\right),
\end{equation*}
and the resulting VP-AVF scheme is given by
\begin{equation}\label{vp-avf2}
	\left\{\begin{aligned}
		\delta_t^+\bm p^{n} & =-\frac{\kappa_1}{2} D \bm q^{n+1/2} - \gamma \bm u^{n+1} \bm q^{n+1/2},                               \\
		\delta_t^+\bm q^{n} & =\frac{\kappa_1}{2} D \bm p^{n+1/2} + \gamma \bm u^n \bm p^{n+1/2},                                    \\
		\delta_t^+\bm u^{n} & =\bm v^{n+1/2},                                                                                        \\
		\delta_t^+\bm v^{n} & =\kappa_2 D \bm u^{n+1/2} - \mu^2 \bm u^{n+1/2} + \gamma \left((\bm q^{n})^2 + (\bm p^{n+1})^2\right).
	\end{aligned}\right.
\end{equation}
It is evident that the VP-AVF scheme \eqref{vp-avf2} does not contain any linearly implicit subsystems, requiring iterative methods to solve a nonlinear system involving four variables. This example highlights the critical role of reordering in constructing efficient VP-AVF schemes.

It is worth noting that the reordering used in scheme \eqref{vp-avf1} is not the only option for constructing a linearly implicit VP-AVF scheme for the KGS equations. For example, the reordering $\bm{v} > \bm{u} > \bm{q} > \bm{p}$ also produces a linearly implicit VP-AVF scheme, which actually is the adjoint of the scheme \eqref{vp-avf1}, obtained by swapping $n+1 \leftrightarrow n$ and $\tau \leftrightarrow -\tau$. Additionally, different reorderings may result in the same scheme. For instance, the reorderings $\bm{q} > \bm{p} > \bm{u} > \bm{v}$ and $\bm{p} > \bm{q} > \bm{v} > \bm{u}$ both lead to a scheme equivalent to \eqref{vp-avf1}.

Without loss of generality, in the following discussion, we will adopt two variable reorderings, $\bm{p} > \bm{q} > \bm{u} > \bm{v}$ and $\bm{v} > \bm{u} > \bm{q} > \bm{p}$, to further partition by grid points. Using these reorderings, we will construct the DP-AVF scheme and its adjoint for the KGS equations, and through symmetric composition, derive a second-order efficient energy-preserving scheme.

\subsubsection{Partition by grid points}

Based on the ordering $\bm{p} > \bm{q} > \bm{u} > \bm{v}$, we have already determined a variable-based discrete gradient or the VP-AVF method to construct a linearly implicit scheme. Building upon this, in this section, we will further partition each variable by grid points to construct the dual-partition discrete gradient and the corresponding DP-AVF scheme.

For the KGS equations, the dual-partition discrete gradient defined in \eqref{dp-avf} can be written as
\begin{equation}\label{dp-kgs}
	\begin{aligned}
		\overline{\nabla}^{\text{dp}}\mathcal{E}_h(\bm z^{n+1},\bm z^n)=\left(\begin{array}{l}
		\overline{\nabla}^{\text{grd}}_{\bm p}\mathcal{E}_h(\bm p^{n+1},\bm p^{n}\,\big|\, \bm q^n,\bm u^n,\bm v^n)         \\[1ex]
		\overline{\nabla}^{\text{grd}}_{\bm q}\mathcal{E}_h(\bm q^{n+1},\bm q^{n}\,\big|\, \bm p^{n+1},\bm u^n,\bm v^n)     \\[1ex]
		\overline{\nabla}^{\text{grd}}_{\bm u}\mathcal{E}_h(\bm u^{n+1},\bm u^{n}\,\big|\, \bm p^{n+1},\bm q^{n+1},\bm v^n) \\[1ex]
		\overline{\nabla}^{\text{grd}}_{\bm v}\mathcal{E}_h(\bm v^{n+1},\bm v^{n}\,\big|\, \bm p^{n+1},\bm q^{n+1},\bm u^{n+1})
		\end{array}\right),
	\end{aligned}
\end{equation}
which means that, based on the variable reordering, we construct the grid-point-based discrete gradient for each variable separately. Next, we will derive the four discrete gradients $\overline{\nabla}^{\text{grd}}_{\bm p}\mathcal{E}_h, \overline{\nabla}^{\text{grd}}_{\bm q}\mathcal{E}_h, \overline{\nabla}^{\text{grd}}_{\bm u}\mathcal{E}_h$ and $ \overline{\nabla}^{\text{grd}}_{\bm v}\mathcal{E}_h$, respectively.

For simplicity in notation, we continue to use the index $i$ to represent the linear index after reordering, instead of $\widetilde{i}$. We also denote $\bm p^{\theta(i)} = (p_1^{n+1}, \cdots, p_{i-1}^{n+1}, p_i^{\theta}, p_{i+1}^n, \cdots, p_M^n)^\top$, and so on. According to the definition in \eqref{grd-dg}, we have
\begin{equation}\label{grd-p}
	\begin{aligned}
		\left[\overline{\nabla}^{\text{grd}}_{\bm p}\mathcal{E}_h(\bm p^{n+1}, \bm p^{n}\,\big|\, \bm q^n, \bm u^n, \bm v^n)\right]_{i}
		 & = \int_0^1 \left[\nabla_{\bm p}\mathcal{E}_h(\bm p^{\theta(i)}; \bm q^n, \bm u^n, \bm v^n)\right]_i \, d\theta \\
		 & = \int_0^1 \left(-\kappa_1 D \bm p^{\theta(i)} - 2\gamma \bm u^n \bm p^{\theta(i)}\right)_i \, d\theta         \\
		 & = \int_0^1 \left(-\kappa_1 \Delta_h P_{j,k}^\theta - 2\gamma U_{j,k}^n P_{j,k}^\theta\right) \, d\theta,
	\end{aligned}
\end{equation}
where in the last equality, we use the correspondence between the linear indices $i$ and the double indices $(j,k)$, i.e., $\left[D \bm p\right]_i = \Delta_h P_{j,k}$. Due to the local nature of the discrete Laplace operator, we only need to know the ordering of the four neighboring grid points around $(j, k)$ to evaluate the integral of $\Delta_h P_{j,k}^\theta$. Without loss of generality, we denote
\begin{equation}\label{LapP}
	\int_0^1\Delta_h P_{j,k}^\theta d\theta= \frac{1}{h^2} \left(P_{j,k-1}^{n+\text{S}} + P_{j-1, k}^{n+\text{W}} - 4 P_{j,k}^{\theta} + P_{j+1, k}^{n+\text{E}} + P_{j, k+1}^{n+\text{N}} \right),
\end{equation}
where the superscripts $(\text{S}, \text{W}, \text{E}, \text{N})$ take values of either $1$ or $0$, depending on the grid point's relative position to $(j, k)$.

Specifically, if the ordering of the five grid points is:
\begin{equation}\label{order1}
	(j, k-1) > (j-1, k) > (j, k) > (j+1, k) > (j, k+1),
\end{equation}
then we have:
\[
	\int_0^1\Delta_h P_{j,k}^\theta d\theta = \frac{1}{h^2} \left(P_{j,k-1}^{n+1} + P_{j-1, k}^{n+1} - 4 P_{j,k}^{\theta} + P_{j+1, k}^n + P_{j, k+1}^n \right).
\]
This means that the grid points ordered before $(j, k)$ use the values at the $n+1$ level for approximation, while those ordered after use the values at the $n$ level. Based on this criterion, if the opposite ordering:
\begin{equation}\label{order2}
	(j, k+1) > (j+1, k) > (j, k) > (j-1, k) > (j, k-1)
\end{equation}
is adopted, we have:
\[
	\int_0^1\Delta_h P_{j,k}^\theta d\theta = \frac{1}{h^2} \left(P_{j,k-1}^n + P_{j-1, k}^n - 4 P_{j,k}^{\theta} + P_{j+1, k}^{n+1} + P_{j, k+1}^{n+1} \right).
\]
Figure~\ref{order}(a) and (b) illustrate two specific grid point ordering strategies, where each grid point $(j,k)$ follows the above two kinds of orderings \eqref{order1} and \eqref{order2}, respectively.
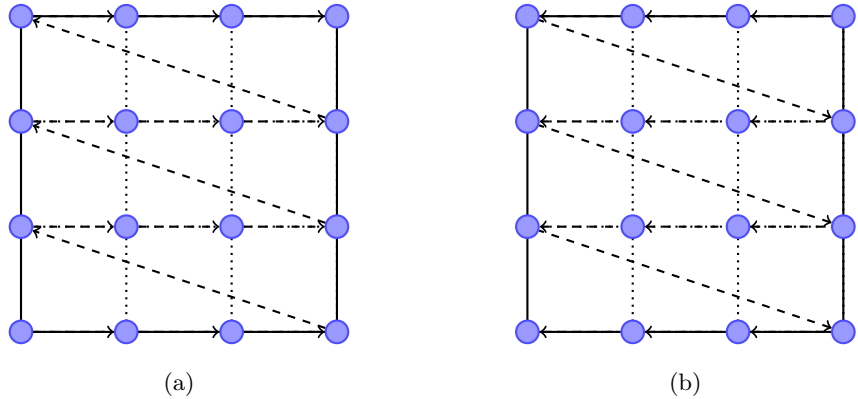
\begin{figure}[H]
	\centering
	\begin{minipage}[t]{0.4\textwidth}
		\centering
		\begin{tikzpicture}[scale=0.7,inner sep=0pt,minimum size=3mm,thick]

			\draw[solid] (0,0) -- (6,0)--(6,-6)--(0,-6)--cycle;

			\foreach \x in {-6,-4,-2}
			\draw[dotted] (0,\x)--(6,\x);

			\foreach \x in {2,4,6}
			\draw[dotted] (\x,0)--(\x,-6);

			\node(10) at (0,-6) [circle,draw=blue!70,fill=blue!40] {};
			\node(11) at (2,-6) [circle,draw=blue!70,fill=blue!40] {};
			\node(12) at (4,-6) [circle,draw=blue!70,fill=blue!40] {};
			\node(13) at (6,-6) [circle,draw=blue!70,fill=blue!40] {};

			\foreach \x in {2,3,4}
			\foreach \y in {0,1,2,3}
			\node(\x\y) at (2*\y,2*\x-8) [circle,draw=blue!70,fill=blue!40] {};

			\path[->,dashed] (10) edge (11);
			\path[->,dashed] (11) edge (12);
			\path[->,dashed] (12) edge (13);
			\path[->,dashed] (13) edge (20);
			\path[->,dashed] (20) edge (21);
			\path[->,dashed] (21) edge (22);
			\path[->,dashed] (22) edge (23);
			\path[->,dashed] (23) edge (30);
			\path[->,dashed] (30) edge (31);
			\path[->,dashed] (31) edge (32);
			\path[->,dashed] (32) edge (33);
			\path[->,dashed] (33) edge (40);
			\path[->,dashed] (40) edge (41);
			\path[->,dashed] (41) edge (42);
			\path[->,dashed] (42) edge (43);

			\node at (3,-7) {\footnotesize (a)};
		\end{tikzpicture}

	\end{minipage}
	\begin{minipage}[t]{0.4\textwidth}
		\centering

		\begin{tikzpicture}[scale=0.7,inner sep=0pt,minimum size=3mm,thick]

			\draw[solid] (0,0) -- (6,0)--(6,-6)--(0,-6)--cycle;

			\foreach \x in {-6,-4,-2}
			\draw[dotted] (0,\x)--(6,\x);

			\foreach \x in {2,4,6}
			\draw[dotted] (\x,0)--(\x,-6);

			\node(10) at (0,-6) [circle,draw=blue!70,fill=blue!40] {};
			\node(11) at (2,-6) [circle,draw=blue!70,fill=blue!40] {};
			\node(12) at (4,-6) [circle,draw=blue!70,fill=blue!40] {};
			\node(13) at (6,-6) [circle,draw=blue!70,fill=blue!40] {};

			\foreach \x in {2,3,4}
			\foreach \y in {0,1,2,3}
			\node(\x\y) at (2*\y,2*\x-8) [circle,draw=blue!70,fill=blue!40] {};

			\path[<-,dashed] (10) edge (11);
			\path[<-,dashed] (11) edge (12);
			\path[<-,dashed] (12) edge (13);
			\path[<-,dashed] (13) edge (20);
			\path[<-,dashed] (20) edge (21);
			\path[<-,dashed] (21) edge (22);
			\path[<-,dashed] (22) edge (23);
			\path[<-,dashed] (23) edge (30);
			\path[<-,dashed] (30) edge (31);
			\path[<-,dashed] (31) edge (32);
			\path[<-,dashed] (32) edge (33);
			\path[<-,dashed] (33) edge (40);
			\path[<-,dashed] (40) edge (41);
			\path[<-,dashed] (41) edge (42);
			\path[<-,dashed] (42) edge (43);

			\node at (3,-7) {\footnotesize (b)};
		\end{tikzpicture}

	\end{minipage}
	\caption{Illustration of two orderings for defining the discrete gradient. The blue circles represent the unknowns, and the dashed arrows indicate the computing orders. (a) Each grid point follows the ordering \eqref{order1}; (b) Each grid point follows the ordering \eqref{order2}.} \label{order}
\end{figure}

Substituting the discrete Laplace \eqref{LapP} into the last equality of \eqref{grd-p}, we obtain
\[
	\begin{aligned}
		\left[\overline{\nabla}^{\text{grd}}_{\bm p}\mathcal{E}_h(\bm q^{n+1},\bm q^{n}\,\big|\, \bm p^{n+1},\bm u^n,\bm v^n)\right]_{i}
		= & -\frac{\kappa_1}{h^2} \left(P_{j,k-1}^{n+\text{S}} + P_{j-1, k}^{n+\text{W}} - 4 P_{j,k}^{n+1/2} + P_{j+1, k}^{n+\text{E}} + P_{j, k+1}^{n+\text{N}}\right) \\
		  & - 2\gamma U_{j,k}^n P_{j,k}^{n+1/2}.
	\end{aligned}
\]
where the values of the superscripts $(\text{S}, \text{W}, \text{E}, \text{N})$ depend on the ordering of the five grid points around $(j, k)$.

Assuming that the variables $\bm{q}$, $\bm{u}$, and $\bm{v}$ follow the same ordering as $\bm{p}$ for the five grid points around $(j, k)$, we can derive their discrete gradients similarly. For $\bm q$ and $\bm u$, since the gradients involve the Laplace operator, the cooresponding discrete gradients are given by
\[
	\begin{aligned}
		\left[\overline{\nabla}^{\text{grd}}_{\bm q}\mathcal{E}_h(\bm q^{n+1},\bm q^{n}\,\big|\, \bm p^{n+1},\bm u^n,\bm v^n)\right]_{i}
		= & \int_0^1 \left(-\kappa_1 \Delta_h Q_{j,k}^\theta - 2\gamma U_{j,k}^n Q_{j,k}^\theta\right) \, d\theta                                                       \\
		= & -\frac{\kappa_1}{h^2} \left(Q_{j,k-1}^{n+\text{S}} + Q_{j-1, k}^{n+\text{W}} - 4 Q_{j,k}^{n+1/2} + Q_{j+1, k}^{n+\text{E}} + Q_{j, k+1}^{n+\text{N}}\right) \\
		  & - 2\gamma U_{j,k}^n Q_{j,k}^{n+1/2},                                                                                                                        \\
		\left[\overline{\nabla}^{\text{grd}}_{\bm u}\mathcal{E}_h(\bm u^{n+1},\bm u^{n}\,\big|\, \bm p^{n+1},\bm q^{n+1},\bm v^n)\right]_{i}
		= & \int_0^1 \left(-\kappa_2 \Delta_h U_{j,k}^\theta + \mu^2 U_{j,k}^\theta - \gamma \left((P_{j,k}^{n+1})^2 + (Q_{j,k}^{n+1})^2\right)\right) \, d\theta       \\
		= & -\frac{\kappa_2}{h^2} \left(U_{j,k-1}^{n+\text{S}} + U_{j-1, k}^{n+\text{W}} - 4 U_{j,k}^{n+1/2} + U_{j+1, k}^{n+\text{E}} + U_{j, k+1}^{n+\text{N}}\right) \\
		  & + \mu^2 U_{j,k}^{n+1/2} - \gamma \left((P_{j,k}^{n+1})^2 + (Q_{j,k}^{n+1})^2\right).
	\end{aligned}
\]
While for $\bm{v}$, since there are no spatial derivatives, the discrete gradient simplifies to:
\[
	\left[\overline{\nabla}^{\text{grd}}_{\bm v}\mathcal{E}_h(\bm v^{n+1},\bm v^{n}\,\big|\, \bm p^{n+1},\bm q^{n+1},\bm u^{n+1})\right]_{i} = V_{j,k}^{n+1/2}.
\]

The advantage of the energy-preserving scheme constructed through grid-point-based partitioning lies in its ability to perform pointwise computations. Therefore, we do not need to integrate the discrete gradients at each point into the vector form as \eqref{dp-kgs}. Instead, we can directly consider the DP-AVF scheme corresponding to any grid point. According to the DP-AVF method \eqref{dp-avfm} and the compact form of the KGS equations \eqref{kgs-equiv}, we derive the system for $(\bm p_i, \bm q_i, \bm u_i, \bm v_i)$ as follows
\begin{equation}\label{dp-avf-i}
	\left(\begin{aligned}
			\delta_t^+\bm p_i^n \\
			\delta_t^+\bm q_i^n \\
			\delta_t^+\bm u_i^n \\
			\delta_t^+\bm v_i^n \\
		\end{aligned}\right)=
	\begin{pmatrix}
		0            & \frac{1}{2} & 0  & 0 \\
		-\frac{1}{2} & 0           & 0  & 0 \\
		0            & 0           & 0  & 1 \\
		0            & 0           & -1 & 0
	\end{pmatrix}\left(\begin{aligned}
			 & \left[\overline{\nabla}^{\text{grd}}_{\bm p}\mathcal{E}_h(\bm p^{n+1},\bm p^{n}\,\big|\, \bm q^n,\bm u^n,\bm v^n)\right]_{i}             \\
			 & \left[\overline{\nabla}^{\text{grd}}_{\bm q}\mathcal{E}_h(\bm q^{n+1},\bm q^{n}\,\big|\, \bm p^{n+1},\bm u^n,\bm v^n)\right]_{i}         \\
			 & \left[\overline{\nabla}^{\text{grd}}_{\bm u}\mathcal{E}_h(\bm u^{n+1},\bm u^{n}\,\big|\, \bm p^{n+1},\bm q^{n+1},\bm v^n)\right]_{i}     \\
			 & \left[\overline{\nabla}^{\text{grd}}_{\bm v}\mathcal{E}_h(\bm v^{n+1},\bm v^{n}\,\big|\,\bm p^{n+1},\bm q^{n+1},\bm u^{n+1})\right]_{i}
		\end{aligned}\right).
\end{equation}
Substituting the obtained grid-point-based discrete gradient and transforming to the index $(j,k)$, we have the equivalent form of \eqref{dp-avf-i}:
\begin{equation}\label{dp-avf-kj}
	\left\{\begin{aligned}
		\delta_t^+P_{j,k}^n= & -\frac{\kappa_1}{2h^2}(Q_{j,k-1}^{n+\text{S}}+Q_{j-1, k}^{n+\text{W}} - 4 Q_{j,k}^{n+1/2}+ Q_{j+1, k}^{n+\text{E}} + Q_{j, k+1}^{n+\text{N}})-\gamma U_{j,k}^nQ_{j,k}^{n+1/2},                                             \\
		\delta_t^+Q_{j,k}^n= & \frac{\kappa_1}{2h^2}(P_{j,k-1}^{n+\text{S}}+P_{j-1, k}^{n+\text{W}} - 4 P_{j,k}^{n+1/2}+ P_{j+1, k}^{n+\text{E}} + P_{j, k+1}^{n+\text{N}})+\gamma U_{j,k}^nP_{j,k}^{n+1/2},                                              \\
		\delta_t^+U_{j,k}^n= & V_{j,k}^{n+1/2},                                                                                                                                                                                                           \\
		\delta_t^+V_{j,k}^n= & \frac{\kappa_2}{h^2}(U_{j,k-1}^{n+\text{S}}+U_{j-1, k}^{n+\text{W}} - 4 U_{j,k}^{n+1/2}+ U_{j+1, k}^{n+\text{E}} + U_{j, k+1}^{n+\text{N}})\\
		&-\mu^2 U_{j,k}^{n+1/2}+\gamma \big((P_{j,k}^{n+1})^2+(Q_{j,k}^{n+1})^2\big).
	\end{aligned}\right.
\end{equation}

To construct a second-order energy-preserving scheme, we consider to the reverse ordering of variables $\bm{v} > \bm{u} > \bm{q} > \bm{p}$ and define the adjoint of the dual-partition discrete gradient \eqref{dp-kgs} as
\begin{equation*}
	\begin{aligned}
		\overline{\nabla}^{\text{dp},*}\mathcal{E}_h(\bm z^{n+1},\bm z^n)=\left(\begin{array}{l}
		\overline{\nabla}^{\text{grd},*}_{\bm p}\mathcal{E}_h(\bm p^{n+1},\bm p^{n}\,\big|\, \bm q^{n+1},\bm u^{n+1},\bm v^{n+1}) \\[1ex]
		\overline{\nabla}^{\text{grd},*}_{\bm q}\mathcal{E}_h(\bm q^{n+1},\bm q^{n}\,\big|\, \bm p^{n},\bm u^{n+1},\bm v^{n+1})   \\[1ex]
		\overline{\nabla}^{\text{grd},*}_{\bm u}\mathcal{E}_h(\bm u^{n+1},\bm u^{n}\,\big|\, \bm p^{n},\bm q^{n},\bm v^{n+1})     \\[1ex]
		\overline{\nabla}^{\text{grd},*}_{\bm v}\mathcal{E}_h(\bm v^{n+1},\bm v^{n}\,\big|\, \bm p^{n},\bm q^{n},\bm u^{n})
	 \end{array}\right),
	\end{aligned}
\end{equation*}
where the grid-point-based discrete gradient is defined by 
\begin{equation*}
	\begin{aligned}
		\left[\overline{\nabla}^{\text{grd},*}_{\bm p}\mathcal{E}_h(\bm p^{n+1},\bm p^{n}; \bm q^{n+1},\bm u^{n+1},\bm v^{n+1})\right]_{i}= & \int_0^1\left[\nabla_{\bm p}\mathcal{E}_h(\bm p^{\theta(i),*}\,\big|\,\bm q^{n+1},\bm u^{n+1},\bm v^{n+1})\right]_i d\theta                                                                      \\
		=                                                                                                                                   & \int_0^1\left[-\kappa_1D \bm p^{\theta(i),*}-2\gamma \bm u^{n+1}\bm p^{\theta(i),*}\right]_i d\theta                                                                                     \\
		=                                                                                                                                   & \int_0^1\left(-\kappa_1\Delta_h P_{j,k}^{\theta,*}-2\gamma U_{j,k}^{n+1}P_{j,k}^{\theta,*}\right) d\theta                                                                                \\
		=                                                                                                                                   & -\frac{\kappa_1}{h^2}(P_{j,k-1}^{n+\overline{\text{S}}}+P_{j-1, k}^{n+\overline{\text{W}}} - 4 P_{j,k}^{n+1/2}+ P_{j+1, k}^{n+\overline{\text{E}}} + P_{j, k+1}^{n+\overline{\text{N}}}) \\
		    &-2\gamma U_{j,k}^{n+1}P_{j,k}^{n+1/2}.
	\end{aligned}
\end{equation*}
Here, $\bm p^{\theta(i),*} = (p_1^{n}, \cdots, p_{i-1}^{n}, p_i^{\theta}, p_{i+1}^{n+1}, \cdots, p_M^{n+1})^\top$ represents the reverse ordering of $\bm p^{\theta(i)}$. Therefore, we have $\overline{\text{S}}=1-{\text{S}}$ and so on which corresponds the exchange of $n+1\leftrightarrow n$. Similarly, we can derive the rest discrete gradient $\overline{\nabla}^{\text{grd},*}_{\bm q}\mathcal{E}_h, \overline{\nabla}^{\text{grd},*}_{\bm u}\mathcal{E}_h, \overline{\nabla}^{\text{grd},*}_{\bm v}\mathcal{E}_h$, and the corresponding adjoint DP-AVF scheme for the KGS equations yields
\begin{equation}\label{dp-avf-kj-adj}
	\left\{\begin{aligned}
		\delta_t^+P_{j,k}^n= & -\frac{\kappa_1}{2h^2}(Q_{j,k-1}^{n+\overline{\text{S}}}+Q_{j-1, k}^{n+\overline{\text{W}}} - 4 Q_{j,k}^{n+1/2}+ Q_{j+1, k}^{n+\overline{\text{E}}} + Q_{j, k+1}^{n+\overline{\text{N}}})-\gamma U_{j,k}^{n+1}Q_{j,k}^{n+1/2},                                     \\
		\delta_t^+Q_{j,k}^n= & \frac{\kappa_1}{2h^2}(P_{j,k-1}^{n+\overline{\text{S}}}+P_{j-1, k}^{n+\overline{\text{W}}} - 4 P_{j,k}^{n+1/2}+ P_{j+1, k}^{n+\overline{\text{E}}} + P_{j, k+1}^{n+\overline{\text{N}}})+\gamma U_{j,k}^{n+1}P_{j,k}^{n+1/2},                                      \\
		\delta_t^+U_{j,k}^n= & V_{j,k}^{n+1/2},                                                                                                                                                                                                                                                   \\
		\delta_t^+V_{j,k}^n= & \frac{\kappa_2}{h^2}(U_{j,k-1}^{n+\overline{\text{S}}}+U_{j-1, k}^{n+\overline{\text{W}}} - 4 U_{j,k}^{n+1/2}+ U_{j+1, k}^{n+\overline{\text{E}}} + U_{j, k+1}^{n+\overline{\text{N}}})\\
		&-\mu^2 U_{j,k}^{n+1/2}+\gamma \big((P_{j,k}^{n})^2+(Q_{j,k}^{n})^2\big).
	\end{aligned}\right.
\end{equation}

By applying the symmetric composition \eqref{dp-comp} to the schemes \eqref{dp-avf-kj} and \eqref{dp-avf-kj-adj}, we can derive a second-order energy-preserving scheme for the KGS equations, referred to as DP-AVF2 in the subsequent discussion. It is important to note that DP-AVF2 represents a broad family of energy-preserving methods, as the subscripts $(\text{S}, \text{W}, \text{E}, \text{N})$ are determined by the partitioning strategies chosen for the grid points, which offer a variety of options.

For example, if the update order shown in Figure \ref{order}(a) is adopted, where each grid point $(j, k)$ satisfies the ordering \eqref{order1}, then in the corresponding scheme \eqref{dp-avf-kj}, the parameters are $\text{S} = \text{W} = 1$ and $\text{E} = \text{N} = 0$ for all $j, k = 1, 2, \dots, N$. Conversely, if the update order shown in Figure \ref{order}(b) is used, where each grid point $(j, k)$ satisfies the reverse ordering \eqref{order2}, then in the corresponding scheme \eqref{dp-avf-kj}, the parameters are $\text{S} = \text{W} = 0$ and $\text{E} = \text{N} = 1$ for all $j, k = 1, 2, \dots, N$.  In general, once a specific update order for the grid points is given, the DP-AVF scheme \eqref{dp-avf-kj} is fully determined. Consequently, its adjoint scheme \eqref{dp-avf-kj-adj} is also uniquely defined. Combining the two yields the second-order energy-preserving scheme DP-AVF2 for the KGS equations.

\begin{rmk}
	It is important to note that for the sake of simplicity in both notation and exposition, we assumed that all variables $\bm{p}, \bm{q}, \bm{u}, \bm{v}$ follow the same grid-point-based partitioning strategy. However, according to the proof of energy conservation, each variable can actually adopt a different grid-point-based partitioning strategy, and the resulting DP-AVF scheme for the KGS equations will still preserve energy.
\end{rmk}

\section{Explicit and Parallel implementation}\label{sec:4}

In this section,  we will demonstrate that, regardless of the grid point ordering, the DP-AVF schemes \eqref{dp-avf-kj}, \eqref{dp-avf-kj-adj}, and their composition scheme DP-AVF2 enable pointwise explicit computation. Moreover, with a suitable ordering strategy, the scheme can be parallelized for both CPU and GPU implementations.

\subsection{Pointwise explicit computation}

Let $\Psi_{j,k}=P_{i,j}+iQ_{j,k}$. After some arrangement, we can rewrite the DP-AVF scheme \eqref{dp-avf-kj}  as
\begin{equation}\label{scheme1}
	\left\{\begin{aligned}
		 & (i-\frac{\tau\kappa_1}{h^2}+\frac{\tau\gamma}{2}U_{j,k}^n) \Psi_{j,k}^{n+1}=(i+\frac{\tau\kappa_1}{h^2}-\frac{\tau\gamma}{2} U_{j,k}^n)\Psi_{j,k}^{n}                     \\
		 & \qquad\qquad-\frac{\tau\kappa_1}{2h^2}(\Psi_{j,k-1}^{n+\text{S}}+\Psi_{j-1, k}^{n+\text{W}} + \Psi_{j+1, k}^{n+\text{E}} + \Psi_{j, k+1}^{n+\text{N}}),                   \\
		 & U_{j,k}^{n+1}-\frac{\tau}{2} V_{j,k}^{n+1}=U_{j,k}^n+\frac{\tau}{2} V_{j,k}^{n},                                                                                          \\
		 & (\frac{2\tau\kappa_2}{h^2}+\frac{\tau\mu^2}{2} )U_{j,k}^{n+1}+V_{j,k}^{n+1}=V_{j,k}^n-(\frac{2\tau\kappa_2}{h^2}+\frac{\tau\mu^2}{2}) U_{j,k}^{n}                         \\
		 & \qquad\qquad+\frac{\tau\kappa_2}{h^2}(U_{j,k-1}^{n+\text{S}}+U_{j-1, k}^{n+\text{W}}+ U_{j+1, k}^{n+\text{E}} + U_{j, k+1}^{n+\text{N}})+\tau\gamma |\Psi_{j,k}^{n+1}|^2.
	\end{aligned}\right.j, k = 1, \cdots, N.
\end{equation}
Apparently, if the values at the four neighboring grid points of $(j,k)$ are known, we can explicitly solve for $\Psi_{j,k}^{n+1}$ from the first equation. Substituting this into the last equation, and noting that the $2\times 2$ linear system for $U_{j,k}^{n+1}$ and $V_{j,k}^{n+1}$ has constant coefficients (its inverse can be computed in advance), we can also explicitly calculate $U_{j,k}^{n+1}$ and $V_{j,k}^{n+1}$. By iterating over all $j,k$ indices, the entire scheme can be computed pointwise in an explicit manner. In the following, we will demonstrate that if the update order during computation aligns with the ordering of the grid points, then when updating $\Psi_{j,k}^{n+1}$ and $U_{j,k}^{n+1}$, the values at the four neighboring nodes are precomputed, regardless of whether the superscripts $\text{S}, \text{W}, \text{E}, \text{N}$ take 1 or 0.

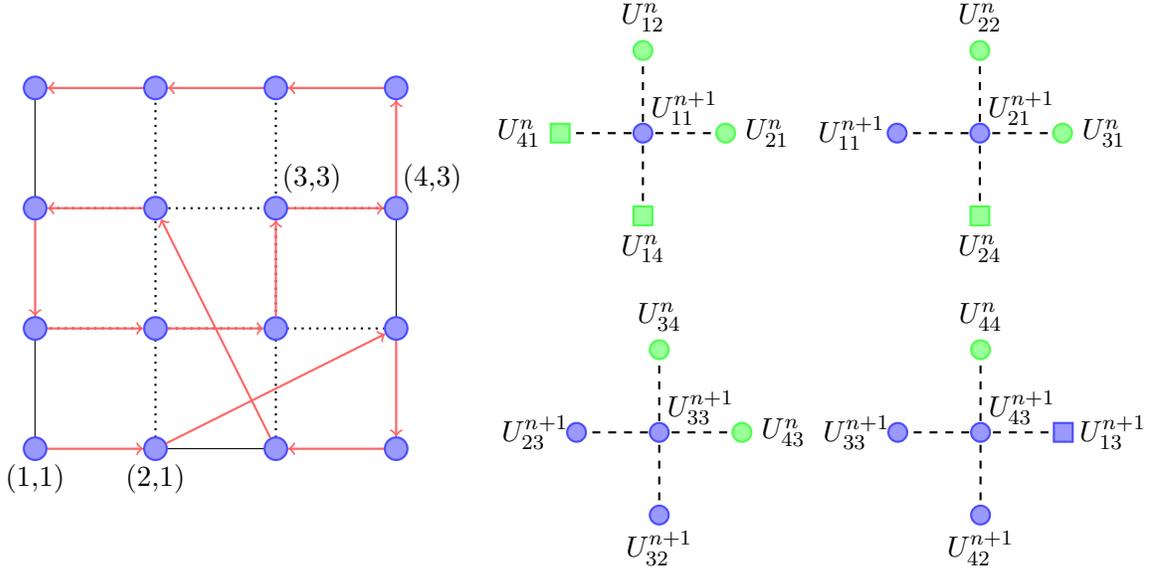
\begin{figure}[H]
	\centering

	\begin{minipage}{0.4\textwidth}
		\centering
		\begin{tikzpicture}[scale=0.8,inner sep=0pt,minimum size=3mm,thick]

			\draw[solid,thin] (0,0) --(0,-6)--(6,-6)--(6,0)--(0,0);

			\foreach \x in {2,4}
			\draw[dotted] (\x,0)--(\x,-6);

			\foreach \y in {-4,-2}
			\draw[dotted] (0,\y)--(6,\y);

			\foreach \x in {0,1,2,3} {
					\foreach \y in {0,1,2,3} {
							\node(\x\y) at (2*\x, -2*\y) [circle,draw=blue!70,fill=blue!40] {};
						}
				}

			\draw[->, red!60,thick] (03) -- (13); 
			\draw[->, red!60,thick] (13) -- (32); 
			\draw[->, red!60,thick] (32) -- (33); 
			\draw[->, red!60,thick] (33) -- (23); 
			\draw[->, red!60,thick] (23) -- (11); 
			\draw[->, red!60,thick] (11) -- (01); 
			\draw[->, red!60,thick] (01) -- (02); 
			\draw[->, red!60,thick] (02) -- (12); 
			\draw[->, red!60,thick] (12) -- (22); 
			\draw[->, red!60,thick] (22) -- (21); 
			\draw[->, red!60,thick] (21) -- (31); 
			\draw[->, red!60,thick] (31) -- (30); 
			\draw[->, red!60,thick] (30) -- (20); 
			\draw[->, red!60,thick] (20) -- (10); 
			\draw[->, red!60,thick] (10) -- (00); 

			\node at (0,-6.5) {(1,1)};
			\node at (2,-6.5) {(2,1)};
			\node at (4.6,-1.5) {(3,3)};
			\node at (6.6,-1.5) {(4,3)};

		\end{tikzpicture}
	\end{minipage}
	\begin{minipage}{0.5\textwidth}
		\centering
		\begin{minipage}{0.45\textwidth}
			\centering
			\begin{tikzpicture}[scale=0.55,inner sep=0pt,minimum size=2.5mm,thick]
				\draw[dashed] (0,-2)--(4,-2);
				\draw[dashed] (2,0)--(2,-4);
				\node at (4,-2) [circle,draw=green!70,fill=green!40]{};
				\node at (2,-2) [circle,draw=blue!70,fill=blue!40] {};
				\node at (2,0) [circle,draw=green!70,fill=green!40] {};
				\node at (0,-2) [rectangle,draw=green!70,fill=green!40] {};
				\node at (2,-4) [rectangle,draw=green!70,fill=green!40] {};

				\node at (3,-1.4) {$U_{11}^{n+1}$};
				\node at (2,0.8) {$U_{12}^{n}$};
				\node at (2,-4.8) {$U_{14}^{n}$};
				\node at (-1,-2) {$U_{41}^{n}$};
				\node at (5,-2) {$U_{21}^{n}$};

			\end{tikzpicture}
		\end{minipage}
		\hspace{0.3cm}
		\begin{minipage}{0.45\textwidth}
			\centering
			\begin{tikzpicture}[scale=0.55,inner sep=0pt,minimum size=2.5mm,thick]
				\draw[dashed] (0,-2)--(4,-2);
				\draw[dashed] (2,0)--(2,-4);
				\node at (4,-2) [circle,draw=green!70,fill=green!40]{};
				\node at (2,-2) [circle,draw=blue!70,fill=blue!40] {};
				\node at (2,0) [circle,draw=green!70,fill=green!40] {};
				\node at (0,-2) [circle,draw=blue!70,fill=blue!40] {};
				\node at (2,-4) [rectangle,draw=green!70,fill=green!40] {};

				\node at (3,-1.4) {$U_{21}^{n+1}$};
				\node at (2,0.8) {$U_{22}^{n}$};
				\node at (2,-4.8) {$U_{24}^{n}$};
				\node at (-1.1,-2) {$U_{11}^{n+1}$};
				\node at (5,-2) {$U_{31}^{n}$};

			\end{tikzpicture}
		\end{minipage}
		\vspace{0.5cm}

		\begin{minipage}{0.45\textwidth}
			\centering
			\begin{tikzpicture}[scale=0.55,inner sep=0pt,minimum size=2.5mm,thick]
				\draw[dashed] (0,-2)--(4,-2);
				\draw[dashed] (2,0)--(2,-4);
				\node at (4,-2) [circle,draw=green!70,fill=green!40]{};
				\node at (2,-2) [circle,draw=blue!70,fill=blue!40] {};
				\node at (2,0) [circle,draw=green!70,fill=green!40] {};
				\node at (0,-2) [circle,draw=blue!70,fill=blue!40] {};
				\node at (2,-4) [circle,draw=blue!70,fill=blue!40] {};

				\node at (3,-1.4) {$U_{33}^{n+1}$};
				\node at (2,0.8) {$U_{34}^{n}$};
				\node at (2,-4.8) {$U_{32}^{n+1}$};
				\node at (-1/1,-2) {$U_{23}^{n+1}$};
				\node at (5,-2) {$U_{43}^{n}$};

			\end{tikzpicture}
		\end{minipage}
		\hspace{0.3cm}
		\begin{minipage}{0.45\textwidth}
			\centering
			\begin{tikzpicture}[scale=0.55,inner sep=0pt,minimum size=2.5mm,thick]
				\draw[dashed] (0,-2)--(4,-2);
				\draw[dashed] (2,0)--(2,-4);
				\node at (4,-2) [rectangle,draw=blue!70,fill=blue!40] {};
				\node at (2,-2) [circle,draw=blue!70,fill=blue!40] {};
				\node at (2,0) [circle,draw=green!70,fill=green!40] {};
				\node at (0,-2) [circle,draw=blue!70,fill=blue!40] {};
				\node at (2,-4) [circle,draw=blue!70,fill=blue!40] {};

				\node at (3,-1.4) {$U_{43}^{n+1}$};
				\node at (2,0.8) {$U_{44}^{n}$};
				\node at (2,-4.8) {$U_{42}^{n+1}$};
				\node at (-1/1,-2) {$U_{33}^{n+1}$};
				\node at (5.2,-2) {$U_{13}^{n+1}$};

			\end{tikzpicture}
		\end{minipage}

	\end{minipage}

	\caption{A general update order on a $4 \times 4$ grid (left). The arrows indicate the update order for each grid point. Four representative grid points are selected to illustrate the detailed implementation, with their corresponding stencils shown on the right. Green nodes represent known values at time level $n$, while blue nodes represent unknown values at time level $n+1$. Rectangular nodes incorporate the periodic boundary condition.}
	\label{fig:ex}
\end{figure}

Figure~\ref{fig:ex} shows a general grid point update order. To demonstrate that the corresponding scheme \eqref{scheme1} can indeed perform pointwise explicit computation, we selected four typical grid points to show the solution process: $(1,1)$ represents a corner point, $(2,1)$ and $(4,3)$ are boundary points, and $(3,3)$ is an interior point. We only show the update process for the variable $U_{j,k}$ at these four points, as the solution process for $\Psi_{j,k}$ is analogous. The following steps outline the explicit computation process at each of these grid points:

\begin{itemize}
	\item[1.] {\bf Compute $U_{11}^{n+1}$}:  
   To compute $U_{11}^{n+1}$, we require the values of the two boundary points, $U_{41}$ and $U_{14}$. However, these boundary points are updated after $U_{11}$, so we only need to use the values from the previous time step. In the scheme \eqref{scheme1}, both $W$ and $S$ are set to 0, and similarly, $N$ and $E$ are also 0. This setup allows $U_{11}^{n+1}$ to be computed explicitly.

	\item[2.] {\bf  Compute $U_{21}^{n+1}$:} 
   The update for $U_{21}^{n+1}$ differs from that of $U_{11}^{n+1}$ in that, in addition to using the boundary points from the known layer, it also involves $U_{11}^{n+1}$. Since $U_{11}^{n+1}$ has already been computed, $U_{21}^{n+1}$ can be computed explicitly as well.

	\item[3.] {\bf  Compute $U_{33}^{n+1}$:} 
   Updating $U_{33}^{n+1}$ involves the unknown values of $U_{23}$ and $U_{32}$. However, the update order places these values before $U_{33}$, allowing them to be precomputed. As a result, $U_{33}^{n+1}$ can also be computed explicitly in a similar manner.

	\item[4.] {\bf  Compute $U_{43}^{n+1}$:} 
   For the boundary point $U_{43}^{n+1}$, the update requires three neighboring unknown values, including $U_{13}^{n+1}$ under periodic boundary conditions. The update order ensures that all necessary values are precomputed, enabling $U_{43}^{n+1}$ to be computed explicitly.

\end{itemize}

From this general example, it can be seen that the values at the four neighboring grid points to $(j,k)$ involved on the right-hand side of equation \eqref{scheme1} are either already known at time level $n$ or have been precomputed, as long as the update order is consistent with the grid point ordering. Therefore, the scheme \eqref{scheme1} can indeed be computed explicitly point-by-point. Similarly, for its adjoint scheme given by
\begin{equation}\label{scheme1-adj}
	\left\{\begin{aligned}
		 & (i-\frac{\tau\kappa_1}{h^2}+\frac{\tau\gamma}{2}U_{j,k}^{n+1}) \Psi_{j,k}^{n+1}=(i+\frac{\tau\kappa_1}{h^2}-\frac{\tau\gamma}{2} U_{j,k}^n)\Psi_{j,k}^{n}                                                           \\
		 & \qquad\qquad-\frac{\tau\kappa_1}{2h^2}(\Psi_{j,k-1}^{n+\overline{\text{S}}}+\Psi_{j-1, k}^{n+\overline{\text{W}}} + \Psi_{j+1, k}^{n+\overline{\text{E}}} + \Psi_{j, k+1}^{n+\overline{\text{N}}}),                 \\
		 & U_{j,k}^{n+1}-\frac{\tau}{2} V_{j,k}^{n+1}=U_{j,k}^n+\frac{\tau}{2} V_{j,k}^{n},                                                                                                                                    \\
		 & (\frac{2\tau\kappa_2}{h^2}+\frac{\tau\mu^2}{2} )U_{j,k}^{n+1}+V_{j,k}^{n+1}=V_{j,k}^n-(\frac{2\tau\kappa_2}{h^2}+\frac{\tau\mu^2}{2}) U_{j,k}^{n}                                                                   \\
		 & \qquad\qquad+\frac{\tau\kappa_2}{h^2}(U_{j,k-1}^{n+\overline{\text{S}}}+U_{j-1, k}^{n+\overline{\text{W}}}+ U_{j+1, k}^{n+\overline{\text{E}}} + U_{j, k+1}^{n+\overline{\text{N}}})+\tau\gamma |\Psi_{j,k}^{n}|^2,
	\end{aligned}\right.j, k = 1, \cdots, N,
\end{equation}
the same update strategy can be applied to achieve explicit point-by-point computation. In this case, the update order is the reverse of that in scheme \eqref{scheme1}. We need to first solve for $U_{j,k}^{n+1}$ using the last two equations, and then substitute these values into the first equation to find $\Psi_{j,k}^{n+1}$. Again, the values at the four neighboring points involved on the right-hand side are either known or precomputed, as long as the update order is consistent with the grid point ordering.

By combining scheme \eqref{scheme1} with its adjoint scheme \eqref{scheme1-adj}, we obtain the second-order symmetric and energy-preserving DP-AVF2 scheme, which can also be computed explicitly point-by-point. The specific implementation process of the DP-AVF2 scheme is provided in the following {\bf Algorithm~1}.
\begin{table}[H]\label{tab:algo}
	\fontsize{10pt}{12pt}\selectfont
	\renewcommand\arraystretch{1}
	\centering
	\begin{tabular*}{0.9\textwidth}[h]{@{\extracolsep{\fill}}l} \toprule[2pt]
		{\bf Algorithm 1} Implementation of the DP-AVF2 Scheme in 2D  \\\hline
		{\small 1.} {\bf Input:} Initial conditions, update order and parameters  \\
		{\small 2.} {\bf Output:} $\Psi^{n+1}, U^{n+1}, V^{n+1}$\\\hline
		{\small 3.} Compute the inverse of the $2\times 2$ coefficient matrix in the last two equations of scheme \eqref{scheme1}\\
		{\small 4.} {\bf for} $n=1,\cdots,T$\\
		{\small 5.} \qquad {\bf for} $j=\widetilde{1},\cdots,\widetilde{N}$\\
		{\small 6.} \qquad\qquad {\bf for} $k=\widehat{1},\cdots,\widehat{N}$\\
		{\small 7.} \qquad\qquad\qquad Solve the first equation of scheme \eqref{scheme1} for ${\Psi}_{j,k}^{n+1}$ explicitly with $\frac{\tau}{2}$\\
		{\small 8.} \qquad\qquad\qquad Solve the last two equations of scheme \eqref{scheme1} for ${U}_{j,k}^{n+1}$, ${V}_{j,k}^{n+1}$ explicitly with $\frac{\tau}{2}$\\
		{\small 9.} \qquad\qquad {\bf end} \\
		{\small 10.} \qquad {\bf end}\\
		{\small 11.} \qquad {\bf for} $j=\widetilde{N},\cdots,\widetilde{1}$ \\
		{\small 12.} \qquad\qquad {\bf for} $k=\widehat{N},\cdots,\widehat{1}$\\
		{\small 13.} \qquad\qquad\qquad Solve the last two equations of scheme \eqref{scheme1-adj} for $U_{j,k}^{n+1}$, $V_{j,k}^{n+1}$ explicitly with $\frac{\tau}{2}$ \\
		{\small 14.} \qquad\qquad\qquad Solve the first equation of scheme \eqref{scheme1-adj} for $\Psi_{j,k}^{n+1}$  explicitly with $\frac{\tau}{2}$\\
		{\small 15.} \qquad\qquad {\bf end} \\
		{\small 16.} \qquad {\bf end}\\
		{\small 17.} {\bf end}\\
		\bottomrule[2pt]
	\end{tabular*}
\end{table}

Once the update order is determined (e.g., $ j = \widetilde{1}, \cdots, \widetilde{N} $ and $ k = \widehat{1}, \cdots, \widehat{N} $, where the ordering of $ j $ and $ k $ can be independent), only $ N^2 $ iterations are required, with each iteration explicitly updating the three variables at the $(j, k)$ grid point for the $n+1$ layer. Notably, no additional memory is needed to store the updated values, as the values from the $n$-th layer can be overwritten directly. This ensures that when updating later grid points, any required values from earlier points are taken from the already updated $n+1$ layer. This approach simplifies programming and conserves memory. For the adjoint scheme, we simply reverse the order of $j$ and $k$ and swap the sequence of solving the equations. Similarly, only $N^2$ iterations of the for-loop are required to achieve explicit pointwise updates for all grid points using the same memory storage.

\subsection{CPU and GPU parallel computation}

As described above, given any update order of grid points, even if randomly generated, the DP-AVF2 scheme can perform explicit pointwise computation with a computational complexity of $\mathcal{O}(N^2)$. In this section, we will further exploit the locality of the spatial central difference operator and demonstrate how the DP-AVF2 scheme can achieve efficient parallel computation on both CPUs and GPUs by carefully selecting the update strategy.

For the basic scheme \eqref{scheme1} and its adjoint scheme \eqref{scheme1-adj} used in the DP-AVF2 method, Figure~\ref{cpu-2d} illustrates an example of a parallel update order that can be implemented on CPUs. It is important to note that any parallel update order is essentially derived from a specific serial update order. Taking the parallelization of the base method shown in the left panel as an example, the direction of the arrows defines a clear serial update order. Solving the problem following this order still allows for explicit pointwise computation. However, we observe that while the grid points in the right blue block must be updated after those in the left blue block. The locality of the central difference scheme ensures that updates within each region only involve the $n$-th layer values at the neighboring grid points within the two adjacent green regions. As a result, in practice, the two blue blocks can be updated simultaneously in two separate processes, $\mathcal{P}_1$ and $\mathcal{P}_2$. Similarly, the adjoint scheme can also be parallelized on CPUs, with the key difference being that the update order for the grid points in the red and green blocks is reversed compared to the base method.

\begin{figure}[H]
	\centering
	\begin{minipage}{0.35\textwidth}

		\begin{tikzpicture}[scale=0.5,inner sep=0pt,minimum size=3mm,thick]

			\draw[solid] (0,0) -- (12,0)--(12,-6)--(0,-6)--cycle;

			\foreach \x in {-4,-2}
			\draw[dotted] (0,\x)--(12,\x);

			\foreach \x in {2,4,6,8,10,12}
			\draw[dotted] (\x,0)--(\x,-6);

			\foreach \x in {1,2,3,4}
			\foreach \y in {1,2,3,4,5,6,7}
			\node(\x\y) at (2*\y-2,2*\x-8) [circle,draw=blue!50,fill=blue!20] {};

			\path[->,dashed] (11) edge (21);
			\path[->,dashed] (21) edge (31);
			\path[->,dashed] (31) edge (41);
			\path[->,dashed] (41) edge (12);

			\path[->,dashed] (12) edge (22);
			\path[->,dashed] (22) edge (32);
			\path[->,dashed] (32) edge (42);
			\path[->,dashed] (42) edge (13);
			\path[->,dashed] (13) edge (23);
			\path[->,dashed] (23) edge (33);
			\path[->,dashed] (33) edge (43);

			\path[<-,dashed] (15) edge (25);
			\path[<-,dashed] (25) edge (35);
			\path[<-,dashed] (35) edge (45);
			\path[->,dashed] (15) edge (46);
			\path[<-,dashed] (16) edge (26);
			\path[<-,dashed] (26) edge (36);
			\path[<-,dashed] (36) edge (46);

			\path[->,dashed] (14) edge (24);
			\path[->,dashed] (24) edge (34);
			\path[->,dashed] (34) edge (44);

			\path[->,dashed] (16) edge (47);
			\path[<-,dashed] (17) edge (27);
			\path[<-,dashed] (27) edge (37);
			\path[<-,dashed] (37) edge (47);

			\draw [<-,dashed] (14) .. controls (7,-7.5) and (11,-7.5).. (17);
			\draw [->,dashed] (43) .. controls (4.55,1.5) and (7.5,1.5).. (45);

			\filldraw[fill=blue!20!white, draw=blue!70!black,very thin,opacity=0.5]
			(1.5,-6.5) rectangle (4.5,0.5);
			\filldraw[fill=blue!20!white, draw=blue!70!black,very thin,opacity=0.5]
			(7.5,-6.5) rectangle (10.5,0.5);

			\filldraw[fill=magenta!20!white, draw=red!70!black,very thin,opacity=0.5]
			(-0.5,-6.5) rectangle (0.5,0.5);
			\filldraw[fill=green!20!white, draw=green!70!black,very thin,opacity=0.5]
			(5.5,-6.5) rectangle (6.5,0.5);
			\filldraw[fill=green!20!white, draw=green!70!black,very thin,opacity=0.5]
			(11.5,-6.5) rectangle (12.5,0.5);
			\node at (3,1.5) {$\mathcal{P}_1$};
			\node at (9,1.5) {$\mathcal{P}_2$};
		\end{tikzpicture}
	\end{minipage}
	\hspace{1.5cm}
	\begin{minipage}{0.35\textwidth}

		\begin{tikzpicture}[scale=0.5,inner sep=0pt,minimum size=3mm,thick]

			\draw[solid] (0,0) -- (12,0)--(12,-6)--(0,-6)--cycle;

			\foreach \x in {-4,-2}
			\draw[dotted] (0,\x)--(12,\x);

			\foreach \x in {2,4,6,8,10,12}
			\draw[dotted] (\x,0)--(\x,-6);

			\foreach \x in {1,2,3,4}
			\foreach \y in {1,2,3,4,5,6,7}
			\node(\x\y) at (2*\y-2,2*\x-8) [circle,draw=blue!50,fill=blue!20] {};

			\path[<-,dashed] (11) edge (21);
			\path[<-,dashed] (21) edge (31);
			\path[<-,dashed] (31) edge (41);
			\path[<-,dashed] (41) edge (12);

			\path[<-,dashed] (12) edge (22);
			\path[<-,dashed] (22) edge (32);
			\path[<-,dashed] (32) edge (42);
			\path[<-,dashed] (42) edge (13);
			\path[<-,dashed] (13) edge (23);
			\path[<-,dashed] (23) edge (33);
			\path[<-,dashed] (33) edge (43);

			\path[->,dashed] (15) edge (25);
			\path[->,dashed] (25) edge (35);
			\path[->,dashed] (35) edge (45);
			\path[<-,dashed] (15) edge (46);
			\path[->,dashed] (16) edge (26);
			\path[->,dashed] (26) edge (36);
			\path[->,dashed] (36) edge (46);

			\path[<-,dashed] (14) edge (24);
			\path[<-,dashed] (24) edge (34);
			\path[<-,dashed] (34) edge (44);

			\path[<-,dashed] (16) edge (47);
			\path[->,dashed] (17) edge (27);
			\path[->,dashed] (27) edge (37);
			\path[->,dashed] (37) edge (47);

			\draw [->,dashed] (14) .. controls (7,-7.5) and (11,-7.5).. (17);
			\draw [<-,dashed] (43) .. controls (4.55,1.5) and (7.5,1.5).. (45);

			\filldraw[fill=blue!20!white, draw=blue!70!black,very thin,opacity=0.5]
			(1.5,-6.5) rectangle (4.5,0.5);
			\filldraw[fill=blue!20!white, draw=blue!70!black,very thin,opacity=0.5]
			(7.5,-6.5) rectangle (10.5,0.5);

			\filldraw[fill=green!20!white, draw=green!70!black,very thin,opacity=0.5]
			(-0.5,-6.5) rectangle (0.5,0.5);
			\filldraw[fill=magenta!20!white, draw=red!70!black,very thin,opacity=0.5]
			(5.5,-6.5) rectangle (6.5,0.5);
			\filldraw[fill=magenta!20!white, draw=red!70!black,very thin,opacity=0.5]
			(11.5,-6.5) rectangle (12.5,0.5);
			\node at (3,1.5) {$\mathcal{P}_1$};
			\node at (9,1.5) {$\mathcal{P}_2$};
		\end{tikzpicture}
	\end{minipage}

	\caption{Parallelizable update order of the base method (left) and adjoint method (right) for the KGS equations in 2D.}\label{cpu-2d}
\end{figure}
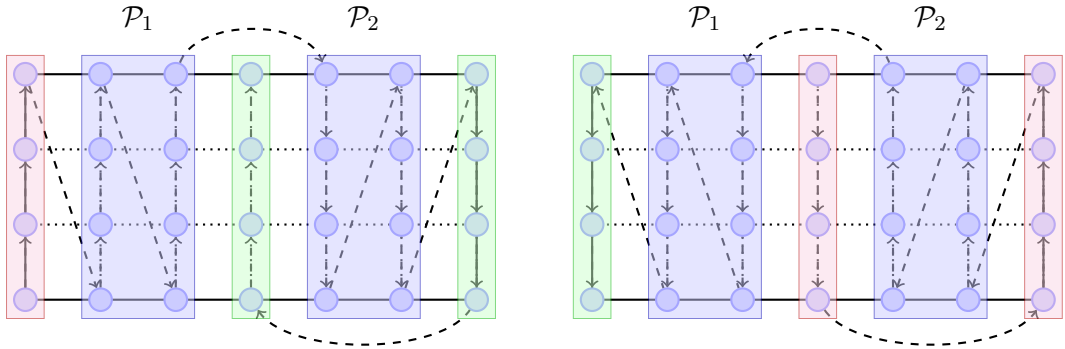

This parallelization strategy can be similarly extended to 3D grids, as illustrated in Figure~\ref{cpu-3d}. In the figure, we have omitted the arrows indicating the update order of the grid points and focused only on the core parallel block structure. It is clear that, following a similar update order to the 2D case, the grid points within the blue regions in Figure~\ref{cpu-3d} can be updated concurrently across different processes. Moreover, as long as the overall update order between blocks is maintained, the update order within each individual block can be arbitrary. This does not disrupt the parallelization strategy and, in fact, enhances the flexibility of the scheme.

\begin{figure}[H]
	\centering
	\begin{minipage}{0.45\textwidth}
			\begin{tikzpicture}[scale=0.52,inner sep=0pt,minimum size=2mm,thick]

				\foreach \y in {-2,-4} {
						\foreach \z in {0,1.2} {
								\draw[dotted] (1,\y,\z) -- (13,\y,\z);
							}
					}
				\foreach \y in {-2,-4,-6} {
						\foreach \z in {1.2} {
								\draw[dotted] (1,\y,\z) -- (13,\y,\z);
							}
					}
				\foreach \y in {0,-6} {
						\foreach \z in {2.4} {
								\draw[solid,thin] (1,\y,\z) -- (13,\y,\z);
							}
					}
				\foreach \y in {0} {
						\foreach \z in {0} {
								\draw[solid,thin] (1,\y,\z) -- (13,\y,\z);
							}
					}
				\foreach \y in {-2,-4} {
						\foreach \z in {2.4} {
								\draw[solid,thin] (1,\y,\z) -- (13,\y,\z);
							}
					}
				\foreach \y in {0} {
						\foreach \z in {1.2} {
								\draw[solid,thin] (1,\y,\z) -- (13,\y,\z);
							}
					}
				\foreach \y in {-6} {
						\foreach \z in {0} {
								\draw[dotted] (1,\y,\z) -- (13,\y,\z);
							}
					}
				\foreach \x in {3,5,7,9,11} {
						\foreach \z in {0,1.2,2.4} {
								\draw[dotted] (\x,0,\z) -- (\x,-6,\z);
							}
					}
				\foreach \x in {1,3,5,7,9,11,13} {
						\foreach \z in {1.2} {
								\draw[dotted] (\x,0,\z) -- (\x,-6,\z);
							}
					}
				\foreach \x in {1,13} {
						\foreach \z in {0,2.4} {
								\draw[solid,thin] (\x,0,\z) -- (\x,-6,\z);
							}
					}
				\foreach \x in {3,5,7,9,11} {
						\foreach \z in {2.4} {
								\draw[solid,thin] (\x,0,\z) -- (\x,-6,\z);
							}
					}
				\foreach \x in {13} {
						\foreach \z in {1.2} {
								\draw[solid,thin] (\x,0,\z) -- (\x,-6,\z);
							}
					}
				\foreach \x in {3,5,7,9,11} {
						\foreach \y in {0,-2,-4,-6} {
								\draw[dotted] (\x,\y,0) -- (\x,\y,2.4);
							}
					}
				\foreach \x in {1,3,5,7,9,11,13} {
						\foreach \y in {-2,-4} {
								\draw[dotted] (\x,\y,0) -- (\x,\y,2.4);
							}
					}
				\foreach \x in {1,13} {
						\foreach \y in {0,-6} {
								\draw[solid,thin] (\x,\y,0) -- (\x,\y,2.4);
							}
					}
				\foreach \x in {13} {
						\foreach \y in {-2,-4} {
								\draw[solid,thin] (\x,\y,0) -- (\x,\y,2.4);
							}
					}
				\foreach \x in {3,5,7,9,11} {
						\foreach \y in {0} {
								\draw[solid,thin] (\x,\y,0) -- (\x,\y,2.4);
							}
					}
				\foreach \x in {1,3,5,7,9,11,13} {
						\foreach \y in {0,-2,-4,-6} {
								\foreach \z in {0,1.2,2.4} {
										\node[circle,draw=blue!50,fill=blue!20] at (\x,\y,\z) {};
									}
							}
					}
				\draw[fill=blue!20!white, draw=blue!70!black,very thin,opacity=0.5] (1.8,-7.5,0) -- (4.4,-7.5,0) -- (4.4,-0.48,0) -- (1.8,-0.48,0) -- cycle ;
				\draw[fill=blue!20!white, draw=blue!70!black,very thin,opacity=0.5] (4.4,-7.5,0) -- (4.4,-7.5,-3) -- (4.4,-0.48,-3) -- (4.4,-0.48,0) -- cycle ;
				\draw[fill=blue!20!white, draw=blue!70!black,very thin,opacity=0.5] (1.8,-0.48,0) -- (4.4,-0.48,0) -- (4.4,-0.48,-3) -- (1.8,-0.48,-3) -- cycle ;
				\draw[fill=blue!20!white, draw=blue!70!black,very thin,opacity=0.5] (7.8,-7.5,0) -- (10.4,-7.5,0) -- (10.4,-0.48,0) -- (7.8,-0.48,0) -- cycle ;
				\draw[fill=blue!20!white, draw=blue!70!black,very thin,opacity=0.5] (10.4,-7.5,0) -- (10.4,-7.5,-3) -- (10.4,-0.48,-3) -- (10.4,-0.48,0) -- cycle ;
				\draw[fill=blue!20!white, draw=blue!70!black,very thin,opacity=0.5] (7.8,-0.48,0) -- (10.4,-0.48,0) -- (10.4,-0.48,-3) -- (7.8,-0.48,-3) -- cycle ;
				\draw[fill=magenta!20!white, draw=red!70!black,very thin,opacity=0.5] (-0.2,-7.5,0) -- (0.4,-7.5,0) -- (0.4,-0.48,0) -- (-0.2,-0.48,0) -- cycle ;
				\draw[fill=magenta!20!white, draw=red!70!black,very thin,opacity=0.5] (0.4,-7.5,0) -- (0.4,-7.5,-3) -- (0.4,-0.48,-3) -- (0.4,-0.48,0) -- cycle ;
				\draw[fill=magenta!20!white, draw=red!70!black,very thin,opacity=0.5] (-0.2,-0.48,0) -- (0.4,-0.48,0) -- (0.4,-0.48,-3) -- (-0.4,-0.48,-3) -- cycle ;

				\draw[fill=green!20!white, draw=green!70!black,very thin,opacity=0.5] (5.8,-7.5,0) -- (6.4,-7.5,0) -- (6.4,-0.48,0) -- (5.8,-0.48,0) -- cycle ;
				\draw[fill=green!20!white, draw=green!70!black,very thin,opacity=0.5] (6.4,-7.5,0) -- (6.4,-7.5,-3) -- (6.4,-0.48,-3) -- (6.4,-0.48,0) -- cycle ;
				\draw[fill=green!20!white, draw=green!70!black,very thin,opacity=0.5] (5.8,-0.48,0) -- (6.4,-0.48,0) -- (6.4,-0.48,-3) -- (5.8,-0.48,-3) -- cycle ;
				\draw[fill=green!20!white, draw=green!70!black,very thin,opacity=0.5] (11.8,-7.5,0) -- (12.4,-7.5,0) -- (12.4,-0.48,0) -- (11.8,-0.48,0) -- cycle ;
				\draw[fill=green!20!white, draw=green!70!black,very thin,opacity=0.5] (12.4,-7.5,0) -- (12.4,-7.5,-3) -- (12.4,-0.48,-3) -- (12.4,-0.48,0) -- cycle ;
				\draw[fill=green!20!white, draw=green!70!black,very thin,opacity=0.5] (11.8,-0.48,0) -- (12.4,-0.48,0) -- (12.4,-0.48,-3) -- (11.8,-0.48,-3) -- cycle ;

	\draw[->,dashed] (0,-0.48,-1.5) .. controls (0.5,0.8,-1.5) and (2.5,0.8,-1.5) .. (2.9,-0.48,-1.5)  ;	
	\draw[->,dashed] (3.1,-0.48,-1.5) .. controls (3.5,1.5,-1.5) and (8.5,1.5,-1.5) .. (8.9,-0.48,-1.5)  ;	
	\draw[->,dashed] (9.1,-0.48,-1.5)  .. controls (9.5,0.8,-1.5) and (11.5,0.8,-1.5) .. (12,-0.48,-1.5)  ;	
	\draw[->,dashed] (12,-7.5,-1.5)  .. controls (11.5,-9.5,-1.5) and (6.5,-9.5,-1.5) .. (6,-7.5,-1.5)  ;
	
	\node at (3,1,-1.5)  {$\mathcal{P}_1$};
	\node at (9,1,-1.5) {$\mathcal{P}_2$};

			\end{tikzpicture}
	\end{minipage}
	\begin{minipage}{0.45\textwidth}
			\begin{tikzpicture}[scale=0.52,inner sep=0pt,minimum size=2mm,thick]

				\foreach \y in {-2,-4} {
						\foreach \z in {0,1.2} {
								\draw[dotted] (1,\y,\z) -- (13,\y,\z);
							}
					}
				\foreach \y in {-2,-4,-6} {
						\foreach \z in {1.2} {
								\draw[dotted] (1,\y,\z) -- (13,\y,\z);
							}
					}
				\foreach \y in {0,-6} {
						\foreach \z in {2.4} {
								\draw[solid,thin] (1,\y,\z) -- (13,\y,\z);
							}
					}
				\foreach \y in {0} {
						\foreach \z in {0} {
								\draw[solid,thin] (1,\y,\z) -- (13,\y,\z);
							}
					}
				\foreach \y in {-2,-4} {
						\foreach \z in {2.4} {
								\draw[solid,thin] (1,\y,\z) -- (13,\y,\z);
							}
					}
				\foreach \y in {0} {
						\foreach \z in {1.2} {
								\draw[solid,thin] (1,\y,\z) -- (13,\y,\z);
							}
					}	
				\foreach \y in {-6} {
						\foreach \z in {0} {
								\draw[dotted] (1,\y,\z) -- (13,\y,\z);
							}
					}
				\foreach \x in {3,5,7,9,11} {
						\foreach \z in {0,1.2,2.4} {
								\draw[dotted] (\x,0,\z) -- (\x,-6,\z);
							}
					}
				\foreach \x in {1,3,5,7,9,11,13} {
						\foreach \z in {1.2} {
								\draw[dotted] (\x,0,\z) -- (\x,-6,\z);
							}
					}
				\foreach \x in {1,13} {
						\foreach \z in {0,2.4} {
								\draw[solid,thin] (\x,0,\z) -- (\x,-6,\z);
							}
					}
				\foreach \x in {3,5,7,9,11} {
						\foreach \z in {2.4} {
								\draw[solid,thin] (\x,0,\z) -- (\x,-6,\z);
							}
					}
				\foreach \x in {13} {
						\foreach \z in {1.2} {
								\draw[solid,thin] (\x,0,\z) -- (\x,-6,\z);
							}
					}
				\foreach \x in {3,5,7,9,11} {
						\foreach \y in {0,-2,-4,-6} {
								\draw[dotted] (\x,\y,0) -- (\x,\y,2.4);
							}
					}
				\foreach \x in {1,3,5,7,9,11,13} {
						\foreach \y in {-2,-4} {
								\draw[dotted] (\x,\y,0) -- (\x,\y,2.4);
							}
					}
				\foreach \x in {1,13} {
						\foreach \y in {0,-6} {
								\draw[solid,thin] (\x,\y,0) -- (\x,\y,2.4);
							}
					}
				\foreach \x in {13} {
						\foreach \y in {-2,-4} {
								\draw[solid,thin] (\x,\y,0) -- (\x,\y,2.4);
							}
					}
				\foreach \x in {3,5,7,9,11} {
						\foreach \y in {0} {
								\draw[solid,thin] (\x,\y,0) -- (\x,\y,2.4);
							}
					}
				\foreach \x in {1,3,5,7,9,11,13} {
						\foreach \y in {0,-2,-4,-6} {
								\foreach \z in {0,1.2,2.4} {
										\node[circle,draw=blue!50,fill=blue!20] at (\x,\y,\z) {};
									}
							}
					}
				\draw[fill=blue!20!white, draw=blue!70!black,very thin,opacity=0.5] (1.8,-7.5,0) -- (4.4,-7.5,0) -- (4.4,-0.48,0) -- (1.8,-0.48,0) -- cycle ;
				\draw[fill=blue!20!white, draw=blue!70!black,very thin,opacity=0.5] (4.4,-7.5,0) -- (4.4,-7.5,-3) -- (4.4,-0.48,-3) -- (4.4,-0.48,0) -- cycle ;
				\draw[fill=blue!20!white, draw=blue!70!black,very thin,opacity=0.5] (1.8,-0.48,0) -- (4.4,-0.48,0) -- (4.4,-0.48,-3) -- (1.8,-0.48,-3) -- cycle ;
				\draw[fill=blue!20!white, draw=blue!70!black,very thin,opacity=0.5] (7.8,-7.5,0) -- (10.4,-7.5,0) -- (10.4,-0.48,0) -- (7.8,-0.48,0) -- cycle ;
				\draw[fill=blue!20!white, draw=blue!70!black,very thin,opacity=0.5] (10.4,-7.5,0) -- (10.4,-7.5,-3) -- (10.4,-0.48,-3) -- (10.4,-0.48,0) -- cycle ;
				\draw[fill=blue!20!white, draw=blue!70!black,very thin,opacity=0.5] (7.8,-0.48,0) -- (10.4,-0.48,0) -- (10.4,-0.48,-3) -- (7.8,-0.48,-3) -- cycle ;
				\draw[fill=green!20!white, draw=green!70!black,very thin,opacity=0.5] (-0.2,-7.5,0) -- (0.4,-7.5,0) -- (0.4,-0.48,0) -- (-0.2,-0.48,0) -- cycle ;
				\draw[fill=green!20!white, draw=green!70!black,very thin,opacity=0.5] (0.4,-7.5,0) -- (0.4,-7.5,-3) -- (0.4,-0.48,-3) -- (0.4,-0.48,0) -- cycle ;
				\draw[fill=green!20!white, draw=green!70!black,very thin,opacity=0.5] (-0.2,-0.48,0) -- (0.4,-0.48,0) -- (0.4,-0.48,-3) -- (-0.4,-0.48,-3) -- cycle ;

				\draw[fill=magenta!20!white, draw=red!70!black,very thin,opacity=0.5] (5.8,-7.5,0) -- (6.4,-7.5,0) -- (6.4,-0.48,0) -- (5.8,-0.48,0) -- cycle ;
				\draw[fill=magenta!20!white, draw=red!70!black,very thin,opacity=0.5] (6.4,-7.5,0) -- (6.4,-7.5,-3) -- (6.4,-0.48,-3) -- (6.4,-0.48,0) -- cycle ;
				\draw[fill=magenta!20!white, draw=red!70!black,very thin,opacity=0.5] (5.8,-0.48,0) -- (6.4,-0.48,0) -- (6.4,-0.48,-3) -- (5.8,-0.48,-3) -- cycle ;
				\draw[fill=magenta!20!white, draw=red!70!black,very thin,opacity=0.5] (11.8,-7.5,0) -- (12.4,-7.5,0) -- (12.4,-0.48,0) -- (11.8,-0.48,0) -- cycle ;
				\draw[fill=magenta!20!white, draw=red!70!black,very thin,opacity=0.5] (12.4,-7.5,0) -- (12.4,-7.5,-3) -- (12.4,-0.48,-3) -- (12.4,-0.48,0) -- cycle ;
				\draw[fill=magenta!20!white, draw=red!70!black,very thin,opacity=0.5] (11.8,-0.48,0) -- (12.4,-0.48,0) -- (12.4,-0.48,-3) -- (11.8,-0.48,-3) -- cycle ;

	\draw[<-,dashed] (0,-0.48,-1.5) .. controls (0.5,0.8,-1.5) and (2.5,0.8,-1.5) .. (2.9,-0.48,-1.5)  ;	
	\draw[<-,dashed] (3.1,-0.48,-1.5) .. controls (3.5,1.5,-1.5) and (8.5,1.5,-1.5) .. (8.9,-0.48,-1.5)  ;	
	\draw[<-,dashed] (9.1,-0.48,-1.5)  .. controls (9.5,0.8,-1.5) and (11.5,0.8,-1.5) .. (12,-0.48,-1.5)  ;	
	\draw[<-,dashed] (12,-7.5,-1.5)  .. controls (11.5,-9.5,-1.5) and (6.5,-9.5,-1.5) .. (6,-7.5,-1.5)  ;
	
		\node at (3,1,-1.5)  {$\mathcal{P}_1$};
	\node at (9,1,-1.5) {$\mathcal{P}_2$};
			\end{tikzpicture}
	\end{minipage}

	\caption{Parallelizable update order of the base method (left) and adjoint method (right) for the KGS equations in 3D.}\label{cpu-3d}
\end{figure}
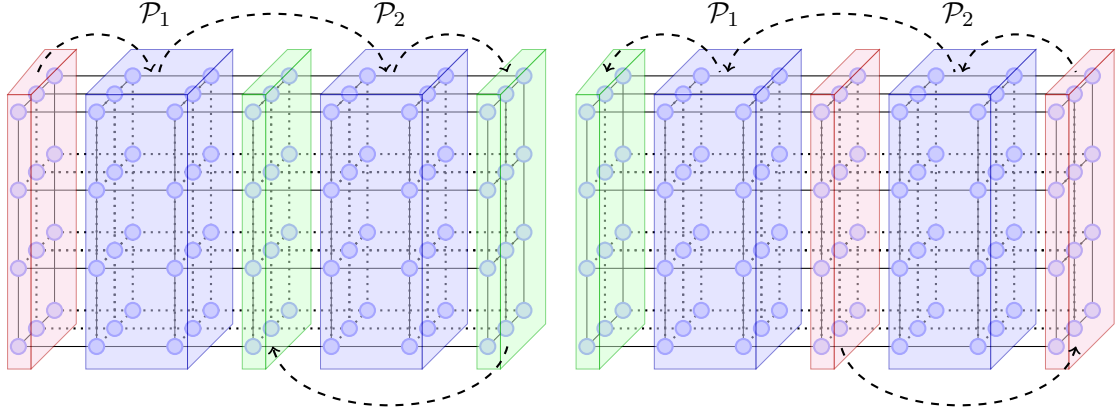

{\bf Algorithm 2} provides the specific implementation flow for the CPU parallel strategy on the 2D grid shown in Figure~\ref{cpu-2d}, and a similar parallel implementation can be obtained for the 3D grid shown in Figure~\ref{cpu-3d}. In {\bf Algorithm 2}, we first define the number of processes required for parallel computation, denoted as $N_p$, and then divide the grid into $N_p$ square regions, as shown by the blue boxes in Figure~\ref{cpu-2d}. Each pair of adjacent square regions is separated by a column, as indicated by the green boxes in Figure~\ref{cpu-2d}. To ensure load balancing across the processes, we strive to make the number of grid points within each of the $N_p$ square regions as equal as possible. During the update process, we first update the grid points on the left boundary, as shown by the red box in Figure~\ref{cpu-2d}. Then, the grid points within the blue boxes are updated simultaneously across $N_p$ processes. Note that, for simplicity of notation, the update process in {\bf Algorithm 2} only rearranges the index $k$, while the index $j$ is sorted in ascending order as $ j_{l-1}, j_{l-1}+1, \cdots, j_l $. In practice, the index $j$ can also be rearranged, meaning that for each process, the update order of the grid points within each blue square can be fully arbitrary for both the $j$ and $k$ indices, without affecting the parallelism or the conservation properties. Similarly, for the adjoint scheme \eqref{scheme1-adj}, the calculation process can be derived from the base method \eqref{scheme1} by reversing the variable computation and grid indices. Therefore, the DP-AVF2 scheme not only enables explicit pointwise serial computation but also allows for easy extension to CPU parallelization.

\begin{table}[H]\label{tab:algo_cpu}
	\fontsize{10pt}{12pt}\selectfont
	\renewcommand\arraystretch{1}
	\centering
	\begin{tabular*}{0.9\textwidth}[h]{@{\extracolsep{\fill}}l} \toprule[2pt]
		{\bf Algorithm 2} CPU Parallel Implementation of the DP-AVF2 Scheme in 2D  \\\hline
		{\small 1.} {\bf Input:} Initial conditions, update order, parameters, number of threads/processors $N_p$  \\
		{\small 2.} {\bf Output:} $\Psi^{n+1}, U^{n+1}, V^{n+1}$\\\hline
		{\small 3.} Divide the spatial domain into $N_p$ subdomains (i.e., $N_p$ blue blocks as shown in Figure \ref{cpu-2d}). \\
		{\small 4.} Record the row indices of the red and green blocks as $j_0, j_1, \dots, j_{N_p}$. \\
		{\small 5.} Compute the inverse of the $2 \times 2$ coefficient matrix in the last two equations of scheme \eqref{scheme1}. \\
		{\small 6.} {\bf for} $n = 1, \dots, T$ \\
		{\small 7.~} \qquad {\bf for} $k = \widehat{1}, \dots, \widehat{N}$ \hfill // {\scriptsize\it Update left boundary} \\
		{\small 8.} \qquad \qquad Solve scheme \eqref{scheme1} explicitly with $\frac{\tau}{2}$ for $\Psi_{j_0,k}^{n+1}$, followed by $U_{j_0,k}^{n+1}$, $V_{j_0,k}^{n+1}$. \\
		{\small 9.} ~\qquad {\bf end} \\
		{\small 10.} \qquad {\bf Parallel for} $l = 1, \dots, N_p$ \hfill // {\scriptsize\it Update blocks} \\
		{\small 11.} \qquad \qquad {\bf for} $j = j_{l-1}, j_{l-1}+1, \dots, j_l$ \\
		{\small 12.} \qquad \qquad \qquad {\bf for} $k = \widehat{1}, \dots, \widehat{N}$ \\
		{\small 13.} \qquad \qquad \qquad \qquad Solve scheme \eqref{scheme1} explicitly with $\frac{\tau}{2}$ for $\Psi_{j,k}^{n+1}$, followed by $U_{j,k}^{n+1}$, $V_{j,k}^{n+1}$. \\
		{\small 14.} \qquad \qquad \qquad {\bf end} \\
		{\small 15.} \qquad \qquad {\bf end} \\
		{\small 16.} \qquad {\bf end} \\
		{\small 17.} \qquad {\bf for} $j = j_1, \dots, j_{N_p}$ \hfill // {\scriptsize\it Update slides} \\
		{\small 18.} \qquad \qquad {\bf for} $k = \widehat{1}, \dots, \widehat{N}$ \\
		{\small 19.} \qquad \qquad \qquad Solve scheme \eqref{scheme1} explicitly with $\frac{\tau}{2}$ for $\Psi_{j,k}^{n+1}$, followed by $U_{j,k}^{n+1}$, $V_{j,k}^{n+1}$. \\
		{\small 20.} \qquad \qquad {\bf end} \\
		{\small 21.} \qquad {\bf end} \\
		{\small 22.} \qquad {\bf for} $j = j_{N_p}, \dots, j_1$ \hfill // {\scriptsize\it Update slides in reverse order} \\
		{\small 23.} \qquad \qquad {\bf for} $k = \widehat{N}, \dots, \widehat{1}$ \\
		{\small 24.} \qquad \qquad \qquad Solve scheme \eqref{scheme1} explicitly with $\frac{\tau}{2}$ for $U_{j,k}^{n+1}$, $V_{j,k}^{n+1}$, followed by $\Psi_{j,k}^{n+1}$. \\
		{\small 25.} \qquad \quad {\bf end} \\
		{\small 26.} \qquad {\bf end} \\
		{\small 27.} \qquad {\bf Parallel for} $l = 1, \dots, N_p$ \hfill // {\scriptsize\it Update blocks in reverse order} \\
		{\small 28.} \qquad \qquad {\bf for} $j = j_l, j_l-1, \dots, j_{l-1}$ \\
		{\small 29.} \qquad \qquad \qquad {\bf for} $k = \widehat{N}, \dots, \widehat{1}$ \\
		{\small 30.} \qquad \qquad \qquad \qquad Solve scheme \eqref{scheme1} explicitly with $\frac{\tau}{2}$ for $U_{j,k}^{n+1}$, $V_{j,k}^{n+1}$, followed by $\Psi_{j,k}^{n+1}$. \\
		{\small 31.} \qquad \qquad \qquad {\bf end} \\
		{\small 32.} \qquad \qquad {\bf end} \\
		{\small 33.} \qquad {\bf end} \\
		{\small 34.} \qquad {\bf for} $k = \widehat{N}, \dots, \widehat{1}$  \hfill // {\scriptsize\it Update left boundary} \\
		{\small 35.} \qquad \qquad Solve scheme \eqref{scheme1} explicitly with $\frac{\tau}{2}$ for $U_{j_0,k}^{n+1}$, $V_{j_0,k}^{n+1}$, followed by $\Psi_{j_0,k}^{n+1}$. \\
		{\small 36.} \qquad {\bf end} \\
		{\small 37.} {\bf end} \\
		\bottomrule[2pt]
	\end{tabular*}
\end{table}

The CPU parallel strategy for the DP-AVF2 scheme exploits spatial locality, primarily along the $x$-direction, as shown in Figure~\ref{cpu-2d} and \ref{cpu-3d}. By considering locality in all directions, more efficient GPU parallel strategies can be developed. Figure~\ref{gpu-2d} illustrates a checkerboard grid of red and blue grid points, where each point interacts only with its four immediate neighbors. Moreover, since no red grid point depends on other red points, updates to red points can be performed in any order without affecting the results, and the same applies to the blue grid points. This independence allows for a sequential update strategy for the base method \eqref{scheme1}, where all red grid points are updated first, followed by all blue points. Both updates are executed in parallel on the GPU in any arbitrary order.

For the adjoint method, both serial and CPU parallel strategies require reversing the update order to maintain the symmetry and second-order accuracy of the DP-AVF2 scheme. However, in the GPU parallel strategy, since the order within the red or blue grid points does not impact the result, only the update order between red and blue points needs to be swapped, with both updates still performed in parallel on the GPU.

\begin{figure}[H]
	\centering
	\begin{minipage}{0.45\textwidth}
	\centering
		\begin{tikzpicture}[scale=0.75,inner sep=0pt,minimum size=3mm,thick]

			\draw[solid] (0,0) -- (6,0)--(6,-6)--(0,-6)--cycle;

			\foreach \x in {-6,-4,-2}
			\draw[dotted] (0,\x)--(6,\x);

			\foreach \x in {2,4,6}
			\draw[dotted] (\x,0)--(\x,-6);


			\node(10) at (0,-6) [circle,draw=blue!70,fill=blue!40] {};
			\node(11) at (2,-6) [circle,draw=red!70,fill=red!40] {};
			\node(12) at (4,-6) [circle,draw=blue!70,fill=blue!40] {};
			\node(13) at (6,-6) [circle,draw=red!70,fill=red!40] {};

			\node(20) at (0,-4) [circle,draw=red!70,fill=red!40] {};
			\node(21) at (2,-4) [circle,draw=blue!70,fill=blue!40] {};
			\node(22) at (4,-4) [circle,draw=red!70,fill=red!40] {};
			\node(23) at (6,-4) [circle,draw=blue!70,fill=blue!40] {};

			\node(30) at (0,-2) [circle,draw=blue!70,fill=blue!40] {};
			\node(31) at (2,-2) [circle,draw=red!70,fill=red!40] {};
			\node(32) at (4,-2) [circle,draw=blue!70,fill=blue!40] {};
			\node(33) at (6,-2) [circle,draw=red!70,fill=red!40] {};

			\node(40) at (0,0) [circle,draw=red!70,fill=red!40] {};
			\node(41) at (2,0) [circle,draw=blue!70,fill=blue!40] {};
			\node(42) at (4,0) [circle,draw=red!70,fill=red!40] {};
			\node(43) at (6,0) [circle,draw=blue!70,fill=blue!40] {};

		\end{tikzpicture}
	\end{minipage}
	\caption{2D checkerboard update order for GPU parallelization of the DP-AVF2 scheme for the KGS equations.}
	\label{gpu-2d}
\end{figure}
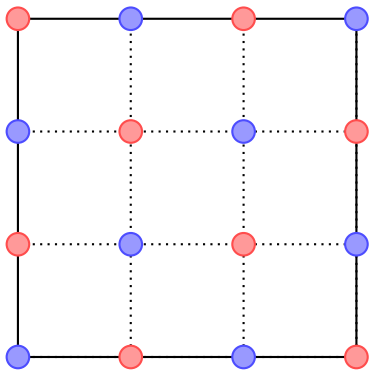

The GPU parallel strategy for the 2D case can be extended to 3D, as shown in Figure~\ref{gpu-3d}. Each red grid point in the 3D checkerboard grid depends only on its six adjacent blue grid points, with no dependencies among red points. For the base method \eqref{scheme1}, all red grid points are updated first, followed by all blue points, with both updates executed in parallel on the GPU in any order. For the adjoint method \eqref{scheme1-adj}, the update order is reversed: blue points are updated first, followed by red points, without affecting the internal order within each group.

\begin{figure}[H]
	\centering
\begin{minipage}{0.38\textwidth}
		\resizebox{1.0\textwidth}{5.5cm}{
			\begin{tikzpicture}[scale=1.0, every node/.style={circle, draw, inner sep=0pt, minimum size=2mm}]
				\draw[solid] (0,0,0) -- (3,0,0) -- (3,4.2,0) -- (0,4.2,0) -- cycle; 
				\draw[solid] (0,0,3) -- (3,0,3) -- (3,4.2,3) -- (0,4.2,3) -- cycle; 
				\draw[solid] (0,0,0) -- (0,0,3);
				\draw[solid] (3,0,0) -- (3,0,3);
				\draw[solid] (3,4.2,0) -- (3,4.2,3);
				\draw[solid] (0,4.2,0) -- (0,4.2,3);

				\fill[green!20,opacity=0.5] (0,0,0) -- (3,0,0) -- (3,0,3) -- (0,0,3) -- cycle; 
				\fill[blue!20,opacity=0.5] (0,1.4,0) -- (3,1.4,0) -- (3,1.4,3) -- (0,1.4,3) -- cycle; 
				\fill[green!20,opacity=0.5] (0,2.8,0) -- (3,2.8,0) -- (3,2.8,3) -- (0,2.8,3) -- cycle; 
				\fill[blue!20,opacity=0.5] (0,4.2,0) -- (3,4.2,0) -- (3,4.2,3) -- (0,4.2,3) -- cycle; 

				\foreach \z in {1, 2} {
						\draw[dotted] (0,0,\z) -- (3,0,\z);
						\draw[dotted] (0,4.2,\z) -- (3,4.2,\z);
						\draw[dotted] (0,0,\z) -- (0,4.2,\z);
						\draw[dotted] (3,0,\z) -- (3,4.2,\z);
						\draw[dotted] (1,0,\z) -- (1,4.2,\z);
						\draw[dotted] (2,0,\z) -- (2,4.2,\z);
						\draw[dotted] (0,1.4,\z) -- (3,1.4,\z);
						\draw[dotted] (0,2.8,\z) -- (3,2.8,\z);
					}

				\foreach \x in {1, 2} {
						\draw[dotted] (\x,0,0) -- (\x,4.2,0);
						\draw[dotted] (\x,0,3) -- (\x,4.2,3);
						\draw[dotted] (\x,0,0) -- (\x,0,3);
						\draw[dotted] (\x,4.2,0) -- (\x,4.2,3);
						\draw[dotted] (\x,1.4,0) -- (\x,1.4,3);
						\draw[dotted] (\x,2.8,0) -- (\x,2.8,3);
					}

				\foreach \y in {1.4, 2.8} {
						\draw[dotted] (0,\y,0) -- (3,\y,0);
						\draw[dotted] (0,\y,3) -- (3,\y,3);
						\draw[dotted] (0,\y,0) -- (0,\y,3);
						\draw[dotted] (3,\y,0) -- (3,\y,3);
						\draw[dotted] (1,\y,0) -- (1,\y,3);
						\draw[dotted] (2,\y,0) -- (2,\y,3);
					}

				\foreach \z in {1, 2}
				\foreach \x in {1, 2}
				\foreach \y in {1.4, 2.8} {
						\draw[dotted] (\x,\y,0) -- (\x,\y,3);
					}

				\foreach \x in {0, 1, 2, 3}
				\foreach \y in {0, 1.4, 2.8, 4.2}
				\foreach \z in {0, 1, 2, 3} {
						\pgfmathtruncatemacro{\colorindex}{mod(\x+\y/1.4+\z, 2)}
						\ifnum\colorindex=0
							\node[draw=red!70,fill=red!40] at (\x, \y, \z) {};
						\else
							\node[draw=blue!70,fill=blue!40] at (\x, \y, \z) {};
						\fi
					}
			\end{tikzpicture}}
	\end{minipage}
	\caption{3D checkerboard update order for GPU parallelization of the DP-AVF2 scheme for the KGS equations.}
	\label{gpu-3d}
\end{figure}
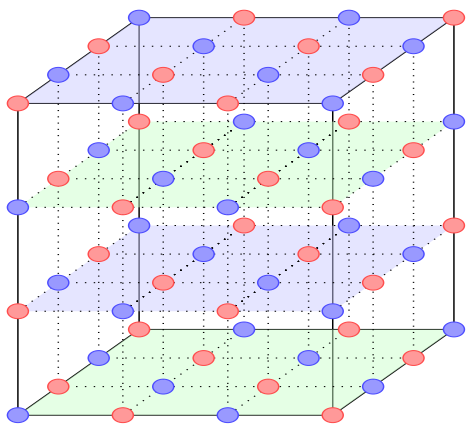


{\bf Algorithm 3} describes the GPU parallel implementation using a 2D checkerboard grid, as shown in Figure~\ref{gpu-2d}, with a similar approach applicable to the 3D checkerboard grid in Figure~\ref{gpu-3d}. The process begins by copying the required data to GPU memory to ensure efficient access during computation. For each time step $n$, the calculation is carried out in parallel by multiple threads, which are classified into red and blue grid points based on the parity of $(j+k)$. In {\bf Algorithm 3}, the base method is computed first, where red grid points are updated before blue points. Then, the adjoint method is computed, with the update order reversed: blue grid points are updated first, followed by red points. Each red or blue grid point is assigned to an individual thread, and all updates are performed in parallel across all grid points, fully exploiting the parallel capabilities of the GPU. After completing the updates, the results are copied back to the host memory. By employing this parallelization strategy, the algorithm fully exploits the GPU’s computational power, enabling a significant acceleration in solving large-scale problems.

\begin{table}[H]\label{tab:algo-gpu}
	\fontsize{10pt}{12pt}\selectfont
	\renewcommand\arraystretch{1}
	\centering
	\begin{tabular*}{0.9\textwidth}[h]{@{\extracolsep{\fill}}l} \toprule[2pt]
		{\bf Algorithm 3} GPU Parallel Implementation of the DP-AVF2 Scheme with Checkerboard Grid   \\\hline
		{\small 1.} {\bf Input:} Initial conditions, parameters \\
		{\small 2.} {\bf Output:} $\Psi^{n+1}$, $U^{n+1}$, $V^{n+1}$  \\\hline
		{\small 3.} Compute the inverse of the $2 \times 2$ coefficient matrix in the last two equations of scheme \eqref{scheme1}. \\
		{\small 4.} Copy data to GPU memory. \\
		{\small 5.} {\bf for} $n = 1, \cdots, T$ \\
		{\small 6.} \qquad {\bf for each thread $(j,k)$ {(in parallel):}}\\
		{\small 7.} \qquad \qquad {\bf if} $(j + k) \bmod 2 == 1$:  \hfill {\scriptsize \it // Check if red point} \\
		{\small 8.} \qquad \qquad \qquad Solve scheme \eqref{scheme1} explicitly with $\frac{\tau}{2}$ for $\Psi_{j,k}^{n+1}$, followed by $U_{j,k}^{n+1}$, $V_{j,k}^{n+1}$. \\
		{\small 9.} \qquad \qquad {\bf end} \\
		{\small 10.} \qquad {\bf end} \\
		{\small 11.} \qquad {\bf for each thread $(j,k)$ (in parallel):} \\
		{\small 12.} \qquad \qquad {\bf if} $(j + k) \bmod 2 == 0$:  \hfill {\scriptsize \it // Check if blue point} \\
		{\small 13.} \qquad \qquad \qquad Solve scheme \eqref{scheme1} explicitly with $\frac{\tau}{2}$ for $\Psi_{j,k}^{n+1}$, followed by $U_{j,k}^{n+1}$, $V_{j,k}^{n+1}$. \\
		{\small 14.} \qquad \qquad {\bf end} \\
		{\small 15.} \qquad {\bf end} \\
		{\small 16.} \qquad {\bf for each thread $(j,k)$ (in parallel):} \\
		{\small 17.} \qquad \qquad {\bf if} $(j + k) \bmod 2 == 0$:  \hfill {\scriptsize \it // Check if blue point} \\
		{\small 18.} \qquad \qquad \qquad Solve scheme \eqref{scheme1} explicitly with $\frac{\tau}{2}$ for $\Psi_{j,k}^{n+1}$, followed by $U_{j,k}^{n+1}$, $V_{j,k}^{n+1}$. \\
		{\small 19.} \qquad \qquad {\bf end} \\
		{\small 20.} \qquad {\bf end} \\
		{\small 21.} \qquad {\bf for each thread $(j,k)$ {(in parallel):}}\\
		{\small 22.} \qquad \qquad {\bf if} $(j + k) \bmod 2 == 1$:  \hfill {\scriptsize \it // Check if red point} \\
		{\small 23.} \qquad \qquad \qquad Solve scheme \eqref{scheme1} explicitly with $\frac{\tau}{2}$ for $U_{j,k}^{n+1}$, $V_{j,k}^{n+1}$, followed by $\Psi_{j,k}^{n+1}$. \\
		{\small 24.} \qquad \qquad {\bf end} \\
		{\small 25.} \qquad {\bf end} \\
		{\small 26.} {\bf end} \\
		{\small 27.} Copy $\Psi^{n+1}$, $U^{n+1}$, and $V^{n+1}$ back from GPU memory to host memory. \\
		\bottomrule[2pt]
	\end{tabular*}
\end{table}

\section{Numerical experiments}\label{sec:5}

In this section, we present numerical experiments to assess the efficiency and conservative properties of the proposed DP-AVF2 scheme, comparing it with three existing energy-preserving methods: the Coupled Implicit Finite Difference (CIFD) method \cite{kgs_cifd}, the Decoupled Implicit Finite Difference (DIFD) method \cite{kgs_difd}, and the SAV Crank-Nicolson (SAV-CN) scheme \cite{fkgs_sav_mcs,kgs_sav_jsc}. 

The CIFD scheme is given by the following system of equations:
\begin{equation*}
\left\lbrace
\begin{aligned}
 & i \delta_{t}^+ \Psi^{n} = -\frac{\kappa_1}{2}\Delta\Psi^{n+\frac{1}{2}} - \gamma \Psi^{n+\frac{1}{2}} U^{n+\frac{1}{2}}, \\
 & \delta_t^2 U^n = \kappa_2 \Delta_h U^{\bar{n}} - \mu^2 U^{\bar{n}} + \gamma |\Psi^n|^2,
\end{aligned}
\right.
\end{equation*}
where $\delta_t^2 U^n = (U^{n+1} - 2U^n + U^{n-1})/\tau^2$ and $U^{\bar{n}} = (U^{n+1} + U^{n-1})/2$. In the CIFD scheme, $ U^{n+1} $ is computed first by solving a constant linear system derived from the second equation. This result is then substituted into the first equation to solve for $ \Psi^{n+1} $, which requires solving a variable linear system.

The DIFD scheme is expressed as:
\begin{equation}\label{difd}
\left\lbrace
\begin{aligned}
 & i \delta_{t} \Psi^n = -\frac{\kappa_1}{2}\Delta_h\Psi^{\bar{n}} - \gamma \Psi^{\bar{n}} U^n, \\
 & \delta_t^2 U^n = \kappa_2 \Delta_h U^{\bar{n}} - \mu^2 U^{\overline{n}} + \gamma |\Psi^n|^2,
\end{aligned}
\right.
\end{equation}
where $ \delta_{t} U^n = (U^{n+1} - U^{n-1})/(2\tau)$. The DIFD scheme allows for the simultaneous solutions of $ \Psi^{n+1} $ and $ U^{n+1} $ from the first and second equations, respectively. This enables parallel computation by variables. Although $ U^{n+1} $ can be efficiently solved using FFT, the primary computational cost of the DIFD scheme arises from solving for $ \Psi^{n+1} $, which involves a variable linear system, making the computational efficiency of the DIFD method comparable to that of the CIFD scheme.

The SAV-CN scheme is given by:
\begin{equation*}
\left\lbrace
\begin{aligned}
 & i \delta_{t}^+ \Psi^{n}  = -\frac{\kappa_1}{2} \Delta_h \Psi^{n+\frac{1}{2}} - \gamma \frac{ R^{n+\frac{1}{2}}}{\sqrt{\mathcal{E}_1+C_0}}\Psi^{\star} U^{\star}, \\
 & \delta_{t}^+ V^{n} =  \kappa_2 \Delta_h U^{n+\frac{1}{2}} - \mu^2 U^{n+\frac{1}{2}} + \gamma \frac{ R^{n+\frac{1}{2}}}{\sqrt{\mathcal{E}_1+C_0}}\left|\Psi^{\star}\right|^2, \\
 & \delta_{t}^+ U^{n} = V^{n+\frac{1}{2}}, \\
 & \delta_{t}^+ R^{n} = - \frac{ 1 }{2} \frac{ 2{\rm Re} \left( \Psi^{\star} U^{\star}, \delta_t \Psi^{n} \right)_h + \left(|\Psi^{\star}|^2, \delta_t U^{n} \right)_h }{\sqrt{\mathcal{E}_1+C_0}},
\end{aligned}
\right.
\end{equation*}
where $\mathcal{E}_1 := -\left(|\Psi^{\star}|^2, U^{\star}\right)_h$, and  $U^{\star} = (3U^n - U^{n-1})/2$. The SAV-CN scheme is highly efficient, as it leverages FFT to solve several constant linear systems along with a scalar linear equation. This makes it one of the most computationally efficient schemes among those presented. However, in the following experiments, we demonstrate that the proposed DP-AVF2 scheme, even in its serial implementation, outperforms the SAV-CN scheme in terms of computational speed, with even greater advantages seen in its CPU and GPU parallel implementations.

\subsection{Examples of 2D KGS equations}

We begin by assessing the accuracy, energy conservation, and computational efficiency of the proposed DP-AVF2 scheme, comparing it with the above three existing energy-preserving methods. These comparisons are conducted using the 2D KGS equations \eqref{kgs} with the following initial conditions:
\[
\left\lbrace
\begin{aligned}
  & \psi_0(\bm{x}) = (1 + i) \exp{\left( -(x^2 + y^2) \right)}, \\
  & u_0(\bm{x}) = \tanh{\left( x^2 + y^2 \right)}, \\
  & v_0(\bm{x}) = \sin{(x + y)} \exp{\left( -2(x^2 + y^2) \right)}.
\end{aligned}
\right.
\]
The computational domain is specified as $ \Omega = (-10, 10) $, and the model parameters are set as $ \kappa_1 = \kappa_2 = \gamma = \mu = 1 $. Since the exact solution of the KGS equations is unavailable for these initial conditions, a high-accuracy reference solution is computed using a sixth-order Petrov-Galerkin method for time discretization and a Fourier spectral method for spatial discretization, as described in Refs. \cite{kgs_hbvms}. To ensure high precision, the discretization parameters are chosen as $ (h, \tau) = \left(\frac{5}{256}, \frac{1}{200}\right) $, which results in the reference solutions $ \psi^{\text{ref}} $ and $ u^{\text{ref}} $. 

We first conduct accuracy tests on the DP-AVF2 scheme and the three comparison schemes by selecting different spatiotemporal step sizes, with the computation terminating at $ t = 1 $. The discrete $ L^2 $ errors of $\psi$ and $u$ are computed as follows
\[
\text{Error}_{\psi}(h, \tau) = \max\left\{ \left\| \text{Re} \left( \Psi^{N_t} - \psi^{\text{ref}} \right) \right\|_h, \left\| \text{Im} \left( \Psi^{N_t} - \psi^{\text{ref}} \right) \right\|_h \right\},
\quad
\text{Error}_u (h, \tau) = \left\| U^{N_t} - u^{\text{ref}} \right\|_h,
\]
where $ \Psi^{N_t} $ and $ U^{N_t} $ are the numerical solutions at the final time step. The convergence rate is then determined using the formula
\[
\text{Order} = \log_2 \left( \frac{\text{Error}(h, \tau)}{\text{Error}(h/2, \tau/2)} \right).
\]

Table~\ref{tab:conv2d} presents the computational errors and corresponding convergence rates for the four schemes with different spatiotemporal step sizes. We adopted a strategy of halving both the time and space step sizes simultaneously, and as shown in Table~\ref{tab:conv2d}, all schemes achieve second-order convergence in both time and space. Moreover, for each step size, the errors of the four schemes are generally of the same order of magnitude. Therefore, to compare computational efficiency, we only need to directly examine the computational time for each scheme when simulating up to $ t = 1 $. Table~\ref{cost-2d} reports the CPU times for the four schemes with a fixed time step size of $ \tau = 0.005 $ and different spatial discretizations. This test was performed on a laptop with an 8-core Apple M3 Pro chip and 32 GB of memory, using Cython for the programming language.

From Table~\ref{cost-2d}, it is evident that the DP-AVF2 scheme requires the least CPU time, as it allows for pointwise explicit solution, with a computational complexity of $ \mathcal{O}(N^2) $. The SAV-CN scheme, involving the solution of constant coefficient linear systems and efficiently implemented using FFT, has a computational complexity of $ \mathcal{O}(N^2 \log N) $, making it the second most efficient method after DP-AVF2. In contrast, both the CIFD and DIFD schemes involve solving variable coefficient linear systems, making them the two most time-consuming schemes. 

\renewcommand{\arraystretch}{1.5}
\begin{table}[H]
	\caption{Solution errors and convergence orders of different schemes with $(h_0, \tau_0) = \left(\frac{5}{16}, \frac{1}{50}\right)$.}\label{tab:conv2d}
	\centering
		\resizebox{\textwidth}{!}{
			\begin{tabular}{lllllllll}
				\toprule
				$(h, \tau)$ & ${ \rm Error }_u$ & ${ \rm Order }_u$ & ${ \rm Error }_\psi$ & ${ \rm Order }_\psi$ & ${ \rm Error }_u$ & ${ \rm Order }_u$ & ${ \rm Error }_\psi$ & ${ \rm Order }_\psi$                                                                                              \\\hline
				& DP-AVF2 & & &  & SAV-CN & & &\\
				$(h_0, \tau_0)$              & 7.5372e-02                  & -                           & 7.6161e-02               & -                        & 7.6973e-02           & -                    & 7.7062e-02           & -                    \\
				$(h_0/2, \tau_0/2)$          & 1.9193e-02                  & 1.9734                      & 1.9582e-02               & 1.9595                   & 1.9848e-02           & 1.9554               & 1.9869e-02           & 1.9555               \\
				$(h_0/4, \tau_0/4)$          & 4.5475e-03                  & 2.0774                      & 4.8110e-03               & 2.0251                   & 4.8798e-03           & 2.0241               & 4.8848e-03           & 2.0241               \\
				\midrule
				& CIFD & & &  & DIFD & & &\\
				$(h_0, \tau_0)$              & 2.1597e-02                  & -                           & 2.2127e-02               & -                        & 2.2052e-02           & -                    & 2.2113e-02           & -                    \\
				$(h_0/2, \tau_0/2)$          & 4.8403e-03                  & 2.1577                      & 5.3051e-03               & 2.0604                   & 5.2957e-03           & 2.0579               & 5.3028e-03           & 2.0601               \\
				$(h_0/4, \tau_0/4)$          & 8.5701e-04                  & 2.4977                      & 1.3089e-03               & 2.0190                   & 1.3071e-03           & 2.0185               & 1.3086e-03           & 2.0187               \\
				\bottomrule
			\end{tabular}}
\end{table}

\begin{table}[H]
	\caption{The CPU time of the proposed different schemes under different spatial grid sizes.}\label{tab:cpu_time_diff_methods2d}\label{cost-2d}
	\centering
		\resizebox{\textwidth}{!}{
		{\scriptsize\begin{tabular}{ccccccc}
			\toprule
			$N$    & DP-AVF2 & {SAV-CN} & {CIFD}-FFT & {CIFD}-PBiCGS & {DIFD}-FFT & {DIFD}-PBiCGS \\
			\midrule
			$256$  & 7.54e$-$02       & 8.35e$-$01      & 2.63e+00          & 5.57e+00             & 2.96e+00          & 6.15e+00             \\
			$512$  & 2.99e+00         & 4.83e+00        & 1.55e+01          & 1.42e+01             & 1.58e+01          & 1.34e+01             \\
			$1024$ & 1.19e+01         & 2.42e+01        & 7.34e+01          & 4.85e+01             & 7.36e+01          & 4.94e+01             \\
			\bottomrule
		\end{tabular}}}
\end{table}

We also compared two strategies for solving variable coefficient linear systems for the CIFD and DIFD schemes: the first involves using existing iterative solvers for linear systems, i.e., the multigrid preconditioned BiCGS stable (PBiCGS) method. The second approach combines FFT with fixed-point iteration techniques. Taking the solution of $ \Psi^{n+1} $ in the DIFD scheme \eqref{difd} as an example, we can solve the first equation as:
\[
\left( i + \frac{\tau \kappa_1 \Delta}{2} \right) \Psi^{n+1,s+1} = \left( i - \frac{\tau \kappa_1 \Delta_h}{2} \right) \Psi^{n-1} - \tau \gamma U^{n-1} (\Psi^{n+1,s} + \Psi^{n-1}), \quad s = 0, 1, \cdots
\]
Here, the superscript $ s $ represents the iteration index, and $ \Psi^{n+1,0} = \Psi^n $. In each iteration, only a constant linear system needs to be solved, where FFT can be efficiently applied. As shown in Table~\ref{cost-2d}, when $ N $ is small, the FFT-based fixed-point iteration method is faster than the iterative solvers. However, as $ N $ increases, the iterative solvers outperform the FFT-based approach. Nonetheless, the computational times for both of these strategies remain significantly larger than those for the DP-AVF2 and SAV-CN schemes.

Next, we consider the energy conservation of the presented schemes. The left figure of Figure~\ref{efficiency-energy-2d} shows the evolution of the relative energy error for the four schemes util $t=10$ with $h=\frac{5}{64}, \tau=0.1$. The relative energy is defined as:
\[
RE^n = \frac{\left|\mathcal{E}_h^n - \mathcal{E}_h^0\right|}{\left|\mathcal{E}^0_h\right|}.
\]
It is clear that all schemes maintain the discrete energy to machine precision. However, it is important to note that the energy definition in the CIFD and DIFD schemes involves averaging over two time steps, whereas the energy in the SAV-CN scheme corresponds to a modified quadratic energy. For specific forms of the energy in these three schemes, please refer to Refs. \cite{kgs_cifd,kgs_difd,fkgs_sav_mcs,kgs_sav_jsc}. Only the energy of the DP-AVF2 scheme directly corresponds to the energy \eqref{ene-origin} of the continuous system.

To further demonstrate that the energy conservation property of the DP-AVF2 scheme is independent of the grid point update order, we tested three randomly selected update orders which actually result three different DP-AVF2 schemes. The right-hand plot in Figure~\ref{efficiency-energy-2d} shows the evolution of relative energy errors for the three DP-AVF2 schemes. It is evident that the discrete energy is preserved to machine precision regardless of the update order.

\begin{figure}[H]
	\centering
	\includegraphics[width=0.48\textwidth]{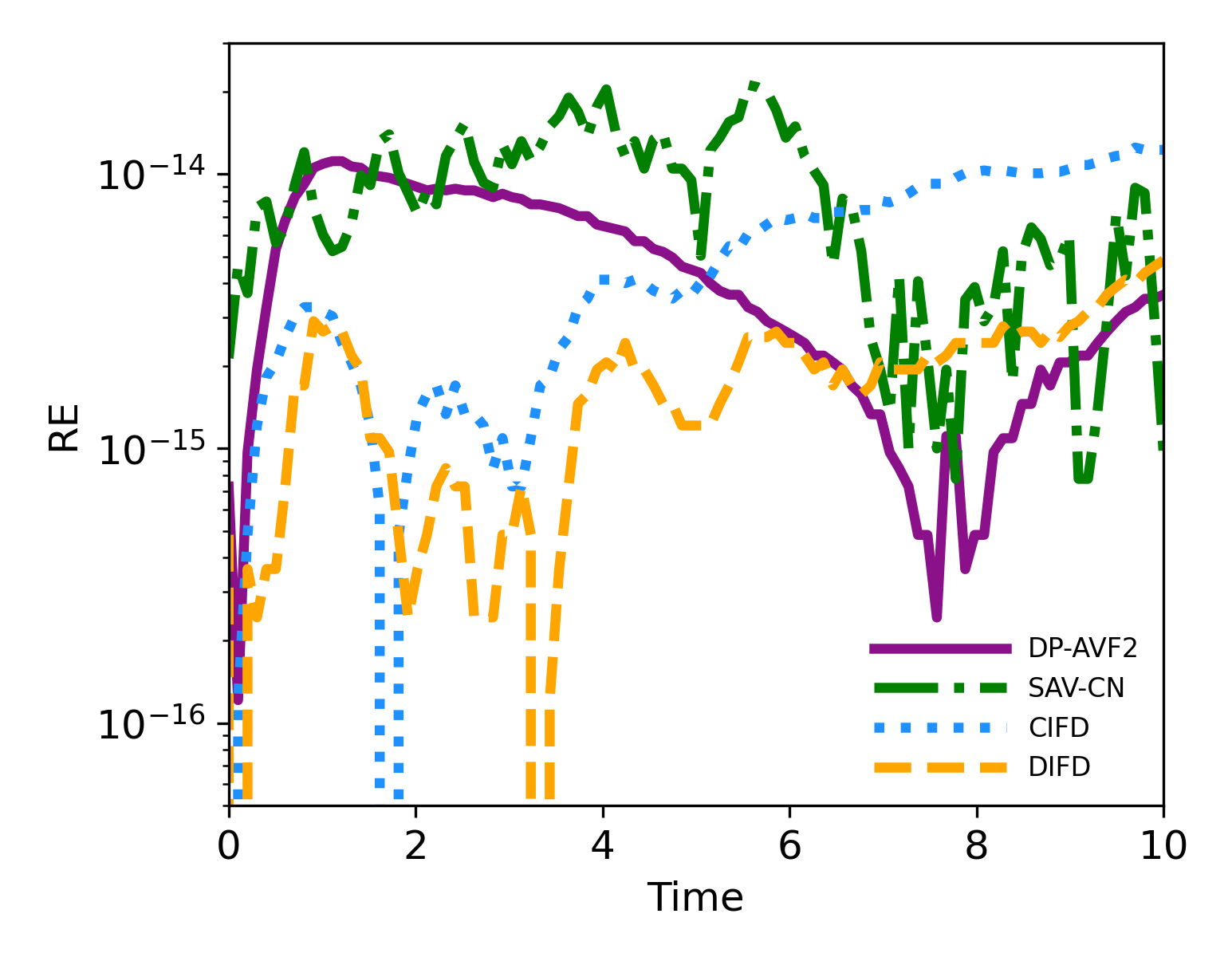}
	\includegraphics[width=0.48\textwidth]{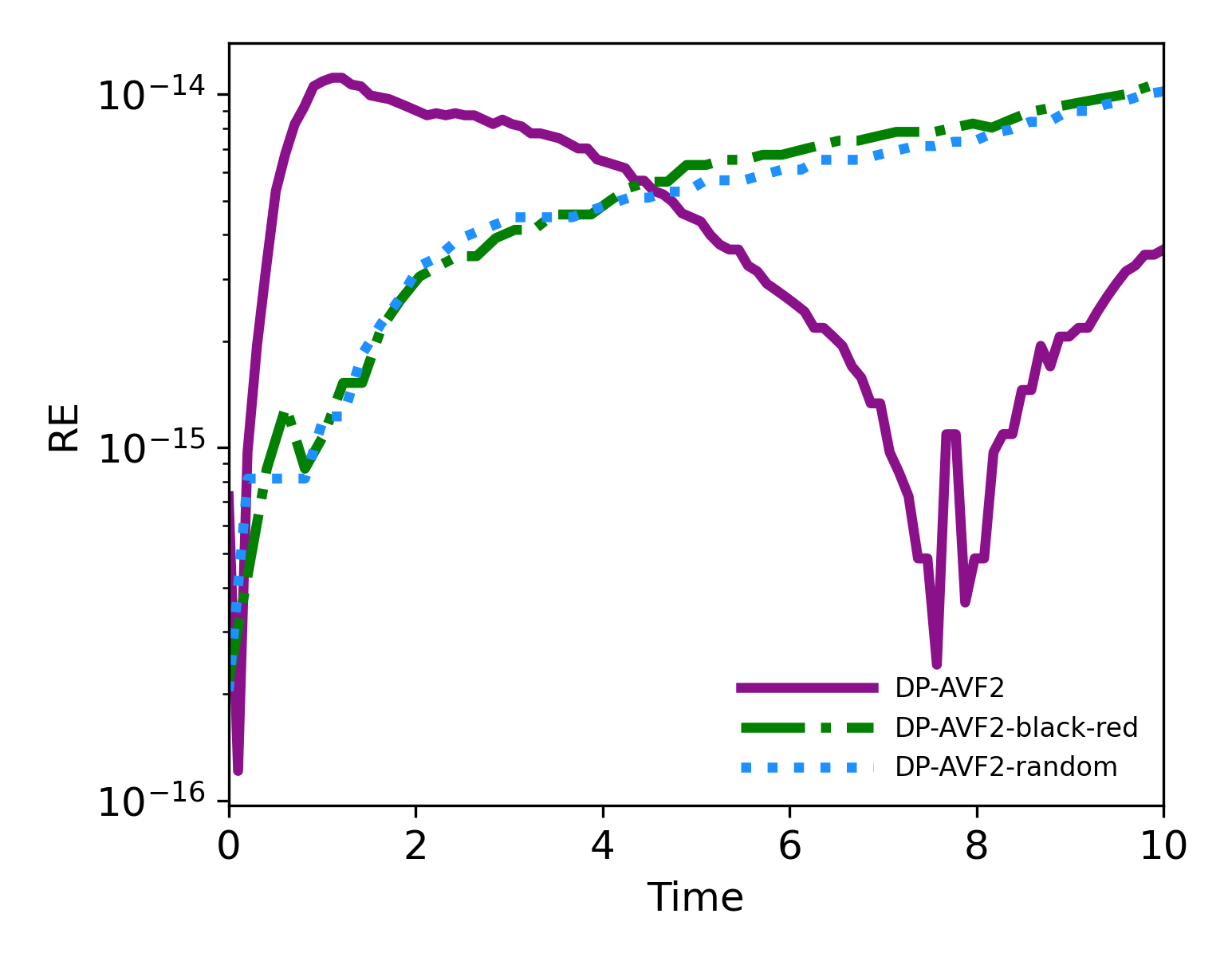}
	\caption{The temporal evolution of the relative discrete energy error of different schemes (left) and different update orders of the DP-AVF2 method (right).}\label{efficiency-energy-2d}
\end{figure}

In the second example, we investigate the parallel efficiency of the DP-AVF2 method applied to the KGS equations with the following initial conditions:
\[
\left\lbrace
\begin{aligned}
\psi_0(\bm{x}) & = (1 + i) \sum\limits_{j = 1}^4 \exp{\left(- (x - x_j)^2 - (y - y_j)^2\right)}, \\
u_0(\bm{x})    & = \sum\limits_{j=1}^4 \tanh{\left( (x - x_j)^2 + (y - y_j)^2 \right)}, \\
v_0(\bm{x})    & = \exp{(-x^2 - y^2)},
\end{aligned}
\right.
\]
where the coordinates are given by $(x_1, y_1) = (0, -3), (x_2, y_2) = (3, 0), (x_3, y_3) = (0, 3), (x_4, y_4) = (-3, 0)$. The spatial domain is set to $\Omega = (-10, 10)^2$. The parameters in \eqref{kgs} are chosen as $\kappa_1 = \kappa_2 = \gamma = \mu = 0.5$, and the time step size is $\tau = 0.01$. The CPU parallel implementation is based on a thread-based environment in Cython, with parallelization achieved by converting the loop range to prange. The GPU implementation is written in C, with the model used being the NVIDIA A40.

Figure~\ref{plot2d} illustrates the evolution of soliton collisions in components $\psi$ and $u$ under CPU or GPU parallel computation with $h = \frac{5}{64}$. Initially, the distribution of $\psi$ consists of four distinct peaks. Over time, these peaks undergo radiation and collision phenomena, leading to the formation of new peaks at the center of the domain. For component $u$, the initial state also consists of four distinct peaks. The results indicate that: (i) interactions among vector solitons generate five peaks within the domain; (ii) during soliton collisions, the amplitude of the central peak continuously increases, while the amplitudes of the other four peaks diminish. Furthermore, as the system evolves, the central peak exhibits noticeable radiative behavior.

\begin{figure}[H]
	\centering
		\includegraphics[width=0.23\textwidth]{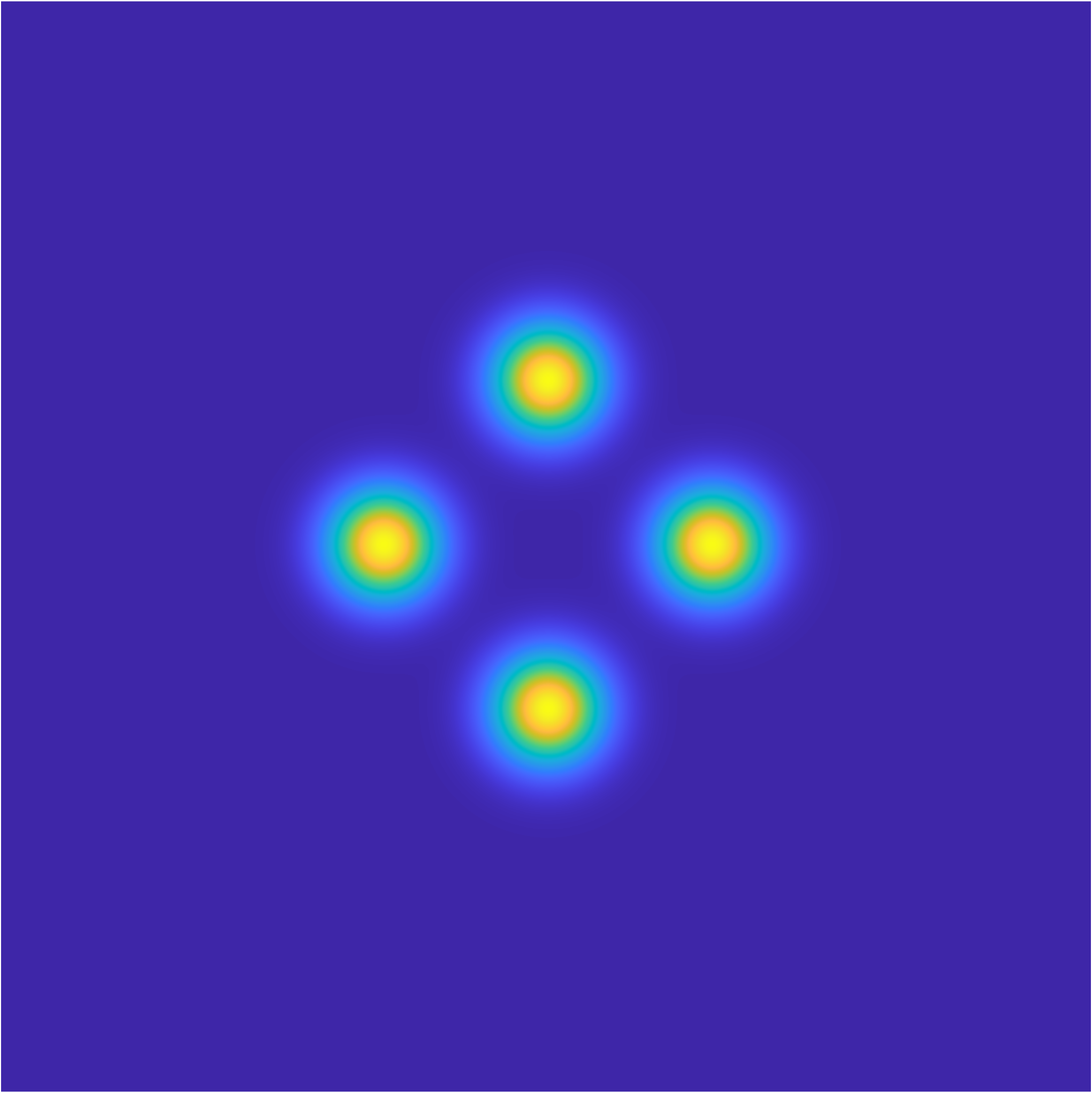}
		\includegraphics[width=0.23\textwidth]{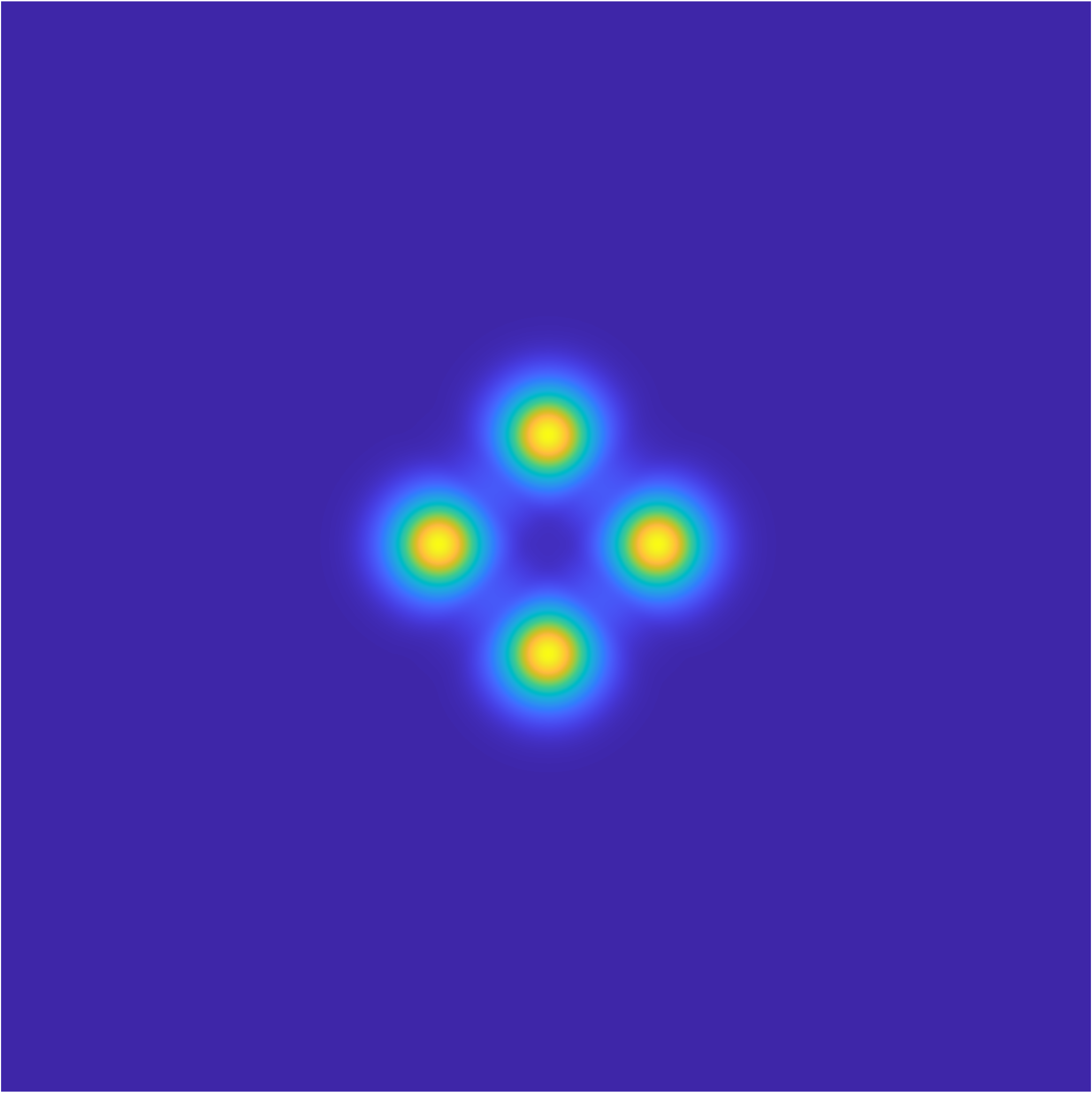}
		\includegraphics[width=0.23\textwidth]{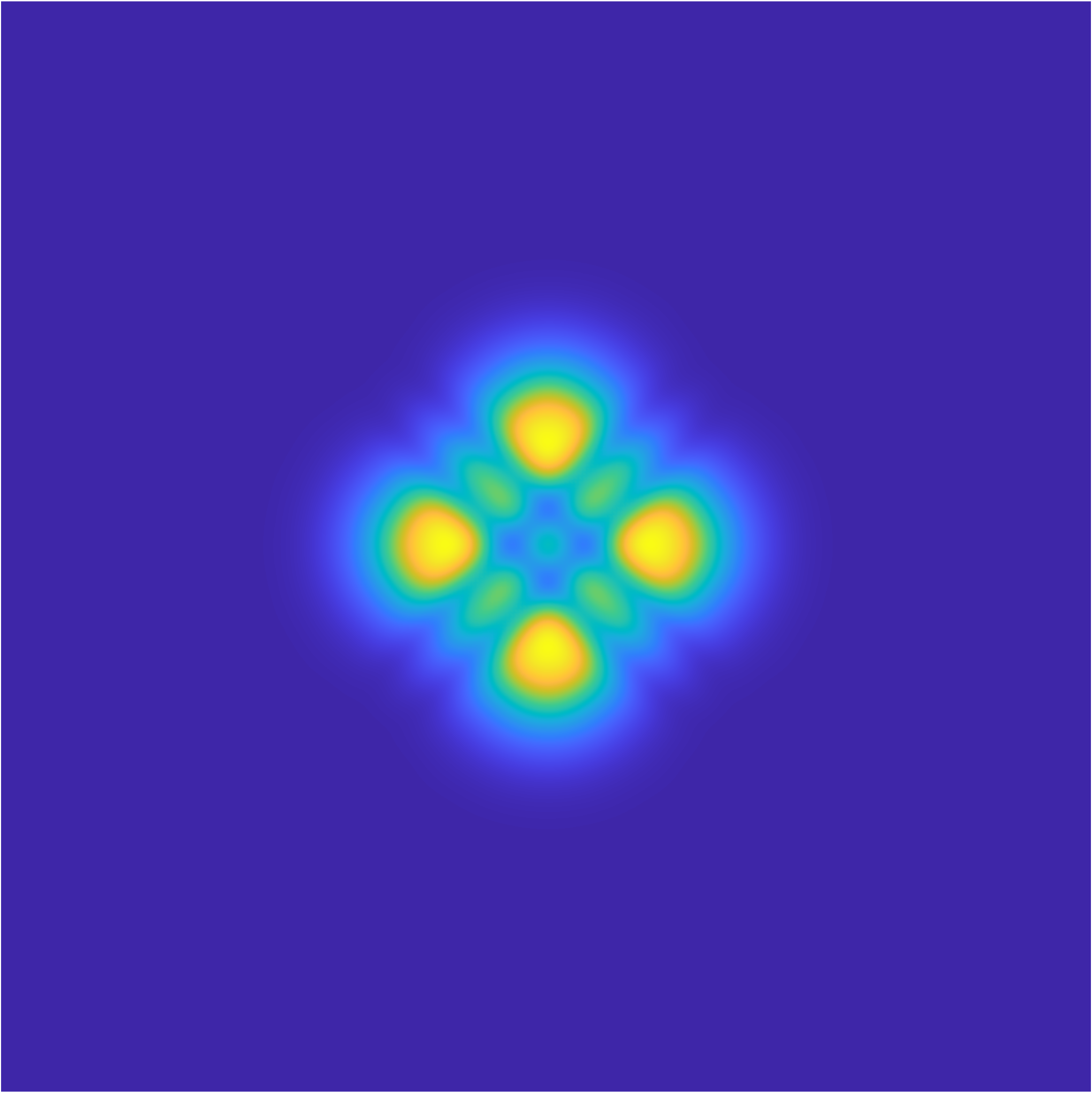}
		\includegraphics[width=0.23\textwidth]{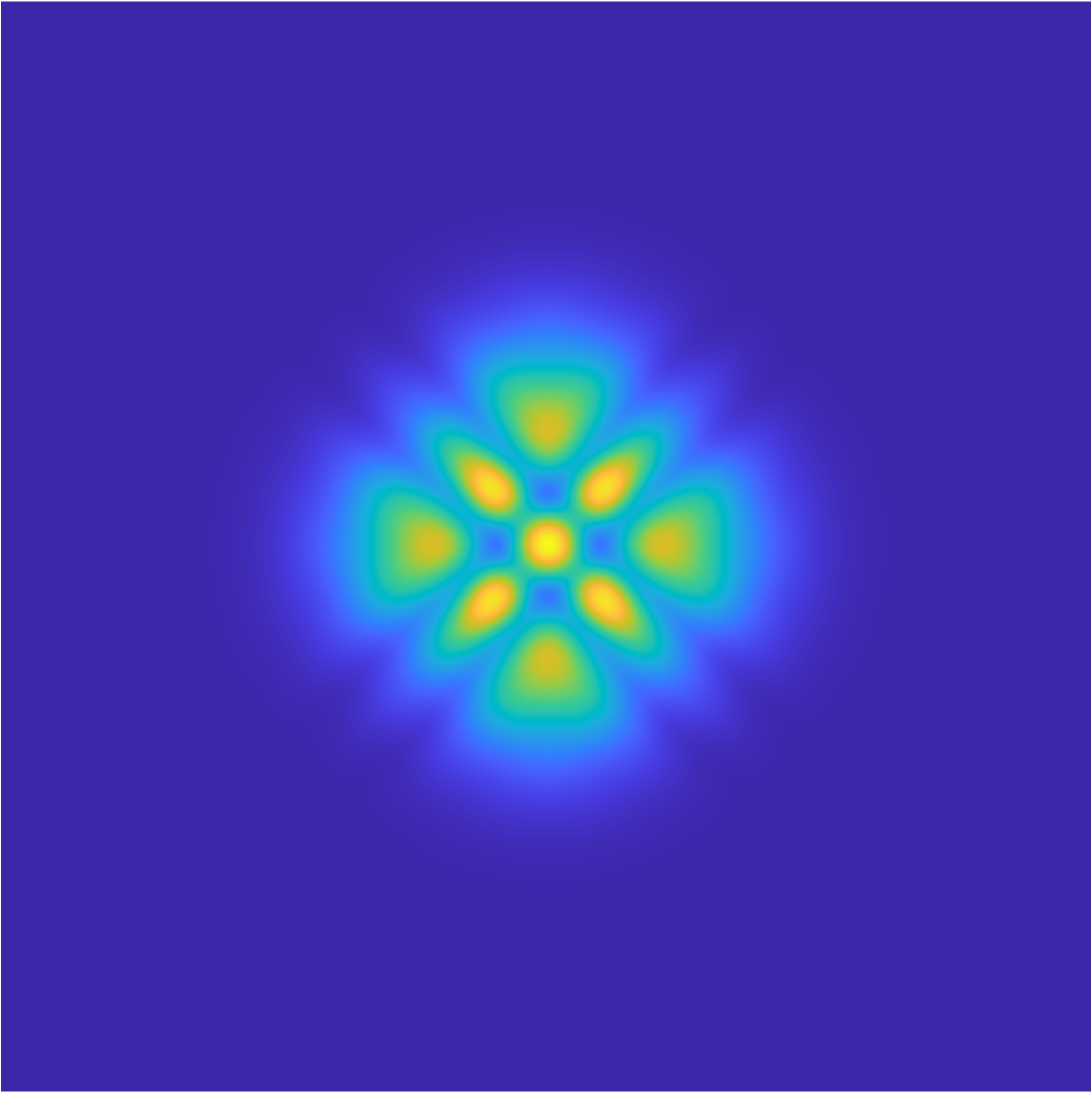}
		\includegraphics[width=0.23\textwidth]{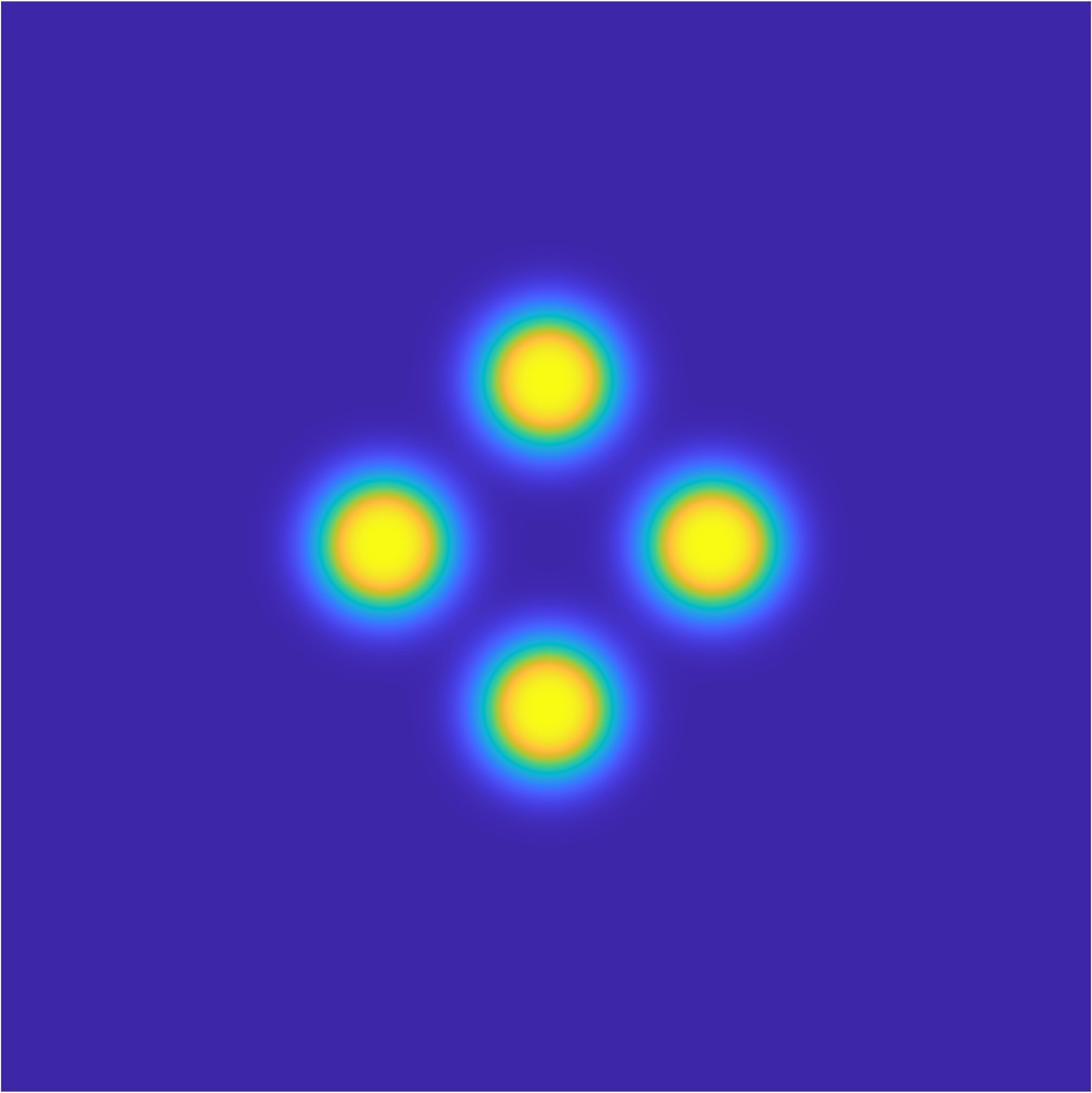}
		\includegraphics[width=0.23\textwidth]{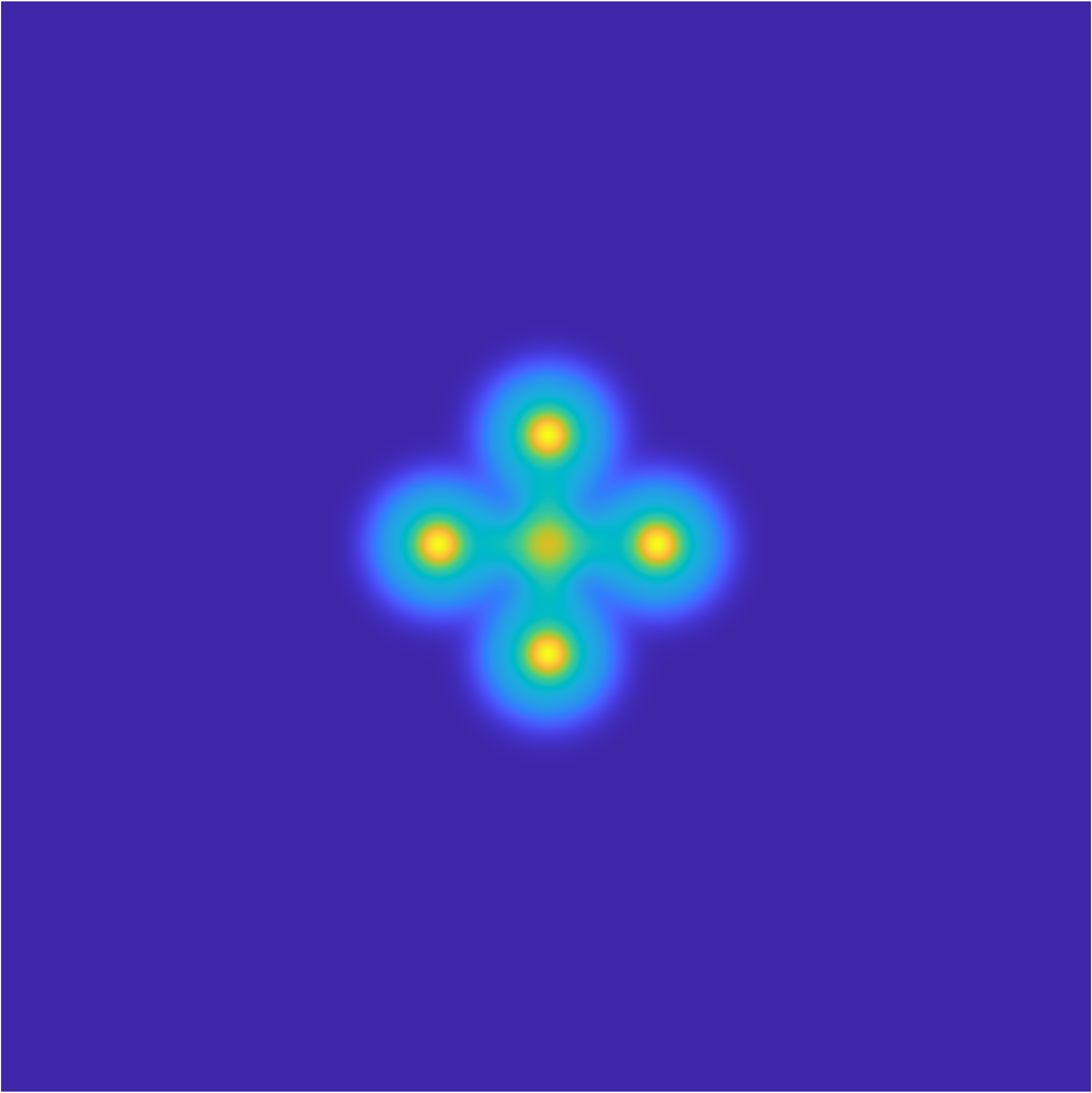}
		\includegraphics[width=0.23\textwidth]{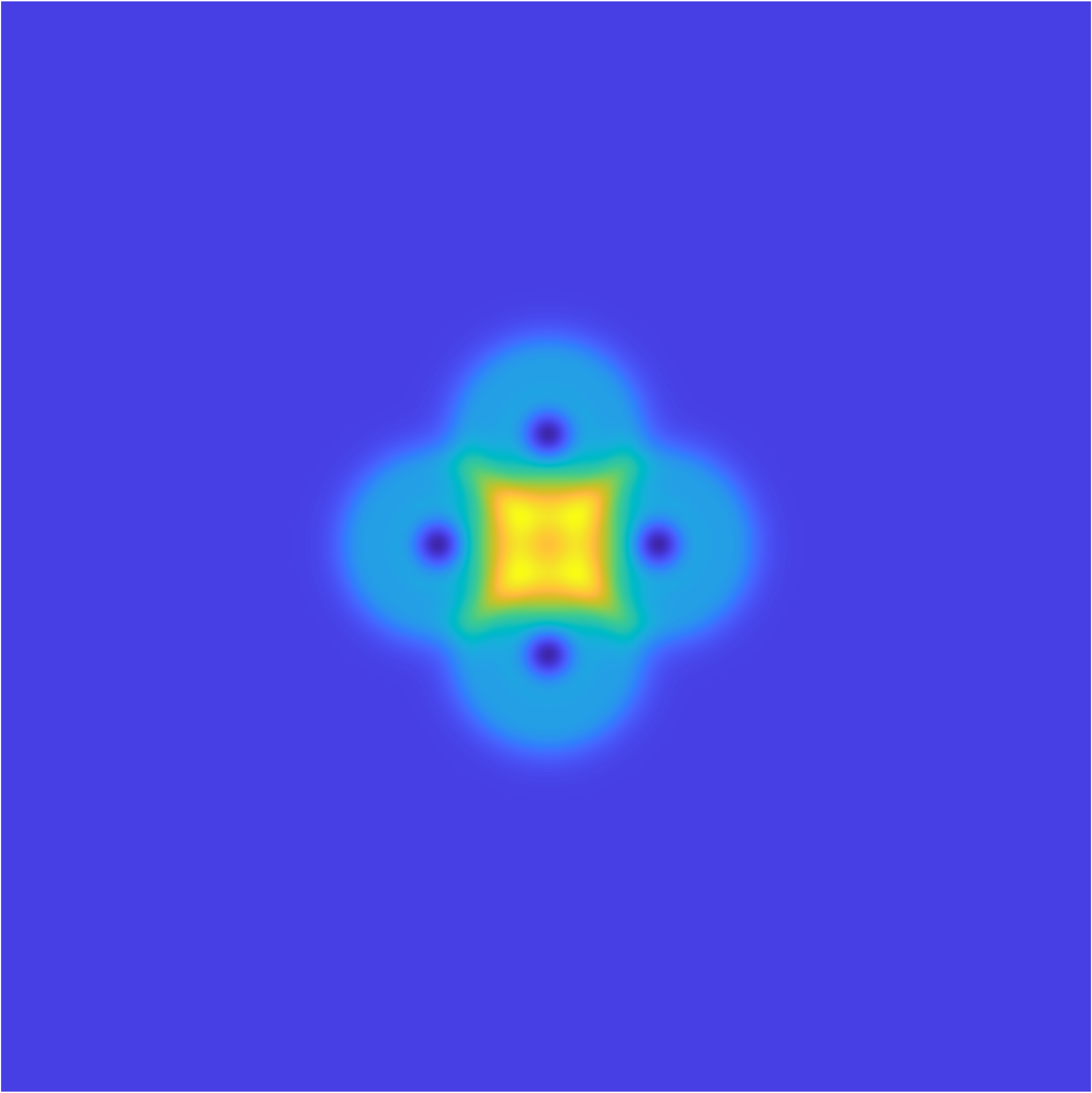}
		\includegraphics[width=0.23\textwidth]{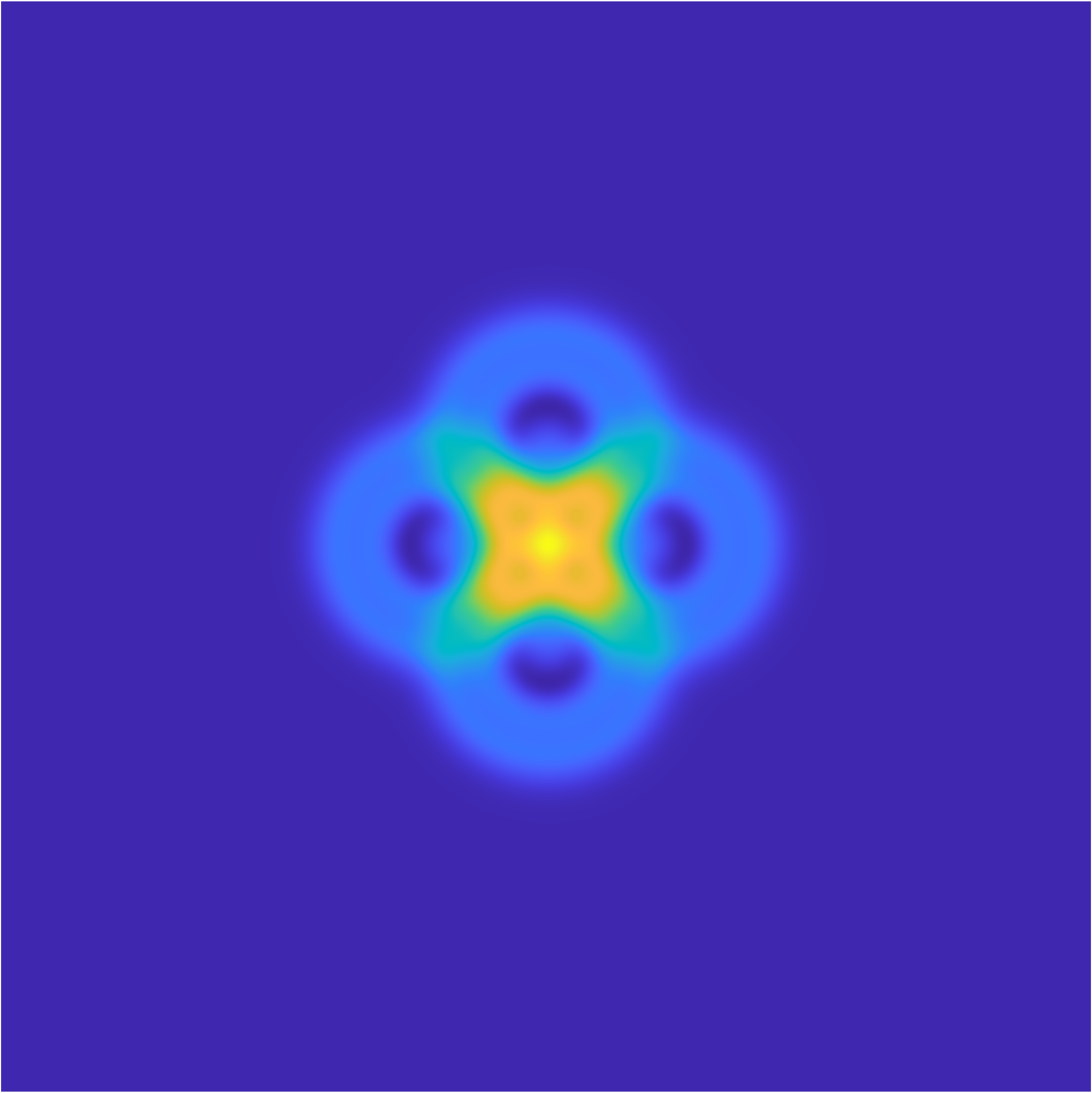}
	\caption{Snapshot $|\psi|$ (top row) and $u$ (bottom row) solved by the DP-AVF2 scheme under different times $t = 0, 1, 2, 3$ (from left to right).}\label{plot2d}
\end{figure}

We then consider the block-splitting parallel strategy illustrated in Figure~\ref{cpu-2d}. Table~\ref{tab:time-2d-cpu} presents the CPU times and speed-ups for the DP-AVF2 schemes, computed up to $t = 1$, at different thread counts and spatial discretizations. The results show that as the number of threads increases, the speed-up improves, but the acceleration is less than the theoretical 2x increase per doubling of threads. This limitation is particularly noticeable when the number of threads approaches the number of CPU cores (8 cores in this case). The parallel efficiency starts to diminish due to thread contention and resource bottlenecks. However, this issue can be mitigated by using parallel clusters with more cores, where the acceleration effect would be more pronounced. Despite this, the DP-AVF2 scheme still demonstrates strong parallel efficiency, making it highly scalable for larger systems and distributed computing environments.

\begin{table}[H]
	\caption{CPU times and speed-ups of the DP-AVF2 schemes with varying numbers of threads and spatial discretizations for the 2D KGS equations. The block-splitting parallel strategy in Figure \ref{cpu-2d} is applied.}\label{tab:time-2d-cpu}
	\centering
	\resizebox{\textwidth}{!}{
		{\scriptsize\begin{tabular}[c]{c|c|cc|cc|cc}
			\hline
			\multicolumn{1}{c|}{}    &
			\multicolumn{7}{c}{\bf CPU time (s)}                                                                            \\
			\hline
			\multicolumn{1}{c|}{$N$} & 1-thread & 2-thread   & speed-up & 4-thread   & speed-up & 8-thread   & speed-up \\
			\hline
			512                      & 1.52e+00 & 8.13e$-$01 & 1.87     & 4.08e$-$01 & 3.73     & 4.02e$-$01 & 3.78     \\
			1024                     & 6.04e+00 & 3.20e+00   & 1.89     & 1.72e+00   & 3.51     & 1.39e+00   & 4.35     \\
			2048                     & 2.42e+01 & 1.27e+01   & 1.91     & 6.47e+00   & 3.74     & 4.75e+00   & 5.09     \\
			\bottomrule
		\end{tabular}}}
\end{table}

Next, we examine the checkerboard parallel strategy of the DP-AVF2 scheme shown in Figure~\ref{gpu-2d}. Although we have focused on the GPU parallel strategy for the checkerboard grid, this approach can also be adapted for CPU parallelism. By assigning the updates of red or blue grid points to multiple CPU threads for simultaneous computation, the same strategy can be applied. Table~\ref{tab:time-2d} lists the computational times for both CPU and GPU parallel strategies based on the checkerboard grid, along with the speed-up compared to single-threaded (serial) computation. 

It is evident that the CPU parallel strategy based on the checkerboard grid requires less time than the block-splitting parallel strategy, as the latter involves updating grid points between blocks, which is not only a serial process but also requires data transfer. In addition, compared to Table~\ref{tab:time-2d-cpu}, the CPU parallel speed-up with the checkerboard grid is slightly lower for the same computation scale, and it still shows a trend where the speed-up decreases more from the theoretical value as the number of processes approaches 8. When the checkerboard grid is used for GPU parallel computation, the computational time is significantly reduced, and the speed-ups exceed 50, demonstrating that the checkerboard parallel strategy is particularly well-suited for GPU parallel computing.

\begin{table}[H]
	\caption{Computational times and speed-ups of the DP-AVF2 schemes on CPU and GPU with varying numbers of threads and spatial discretizations for the 2D KGS equations.}\label{tab:time-2d}
	\centering
	\resizebox{\textwidth}{!}{
		\begin{tabular}[c]{c|c|cc|cc|cc||cc}
			\hline
			\multicolumn{1}{c|}{}             &
			\multicolumn{7}{c||}{\bf CPU time (s)} &
			\multicolumn{2}{c}{\bf GPU time (s)}
			\\
			\hline
			\multicolumn{1}{c|}{$N$}          & 1-thread & 2-thread   & speed-up & 4-thread   & speed-up & 8-thread   & speed-up & time       & speed-up \\
			\hline
			512                               & 1.18e+00 & 6.40e$-$01 & 1.84     & 4.00e$-$01 & 2.95     & 3.59e$-$01 & 3.29     & 2.29e$-$02 & 51.53    \\
			768                               & 2.71e+00 & 1.38e+00   & 1.96     & 8.24e$-$01 & 3.28     & 6.26e$-$01 & 3.73     & 5.25e$-$02 & 51.62    \\
			1024                              & 4.83e+00 & 2.61e+00   & 1.85     & 1.36e+00   & 3.55     & 1.25e+00   & 3.86     & 9.34e$-$02 & 51.71    \\
			1280                              & 7.44e+00 & 3.97e+00   & 1.87     & 2.14e+00   & 3.48     & 1.82e+00   & 4.09     & 1.28e$-$01 & 58.13    \\
			1536                              & 1.08e+01 & 5.51e+00   & 1.96     & 3.02e+00   & 3.58     & 2.57e+00   & 4.20     & 1.96e$-$01 & 55.10    \\
			1792                              & 1.45e+01 & 7.55e+00   & 1.92     & 4.58e+00   & 3.17     & 3.44e+00   & 4.22     & 2.76e$-$01 & 52.54    \\
			2048                              & 1.95e+01 & 1.02e+01   & 1.91     & 5.39e+00   & 3.62     & 4.47e+00   & 4.36     & 3.70e$-$01 & 52.70    \\
			\hline
		\end{tabular}
	}
\end{table}

Moreover, we observe that the computational time for both CPU and GPU parallel implementations scales as $\mathcal{O}(N^2)$. Specifically, doubling $N$ results in a fourfold increase in computational time. This trend is evident from the CPU computational times for different process counts shown in Table~\ref{tab:time-2d}. For GPU parallelization, the computational time similarly scales with the total number of grid points, as task distribution and scheduling overhead depend on the grid size, leading to the same fourfold increase when $N$ is doubled. Figure~\ref{fig:gpu2d_plot} further illustrates the relationship between computational time and the total number of grid points $N^2$ on a logarithmic scale, clearly demonstrating a linear dependence of computational time on $N^2$ for both CPU and GPU parallel strategies, regardless of the number of threads used.

\begin{figure}[H]
	\centering
		\includegraphics[width=0.7\textwidth]{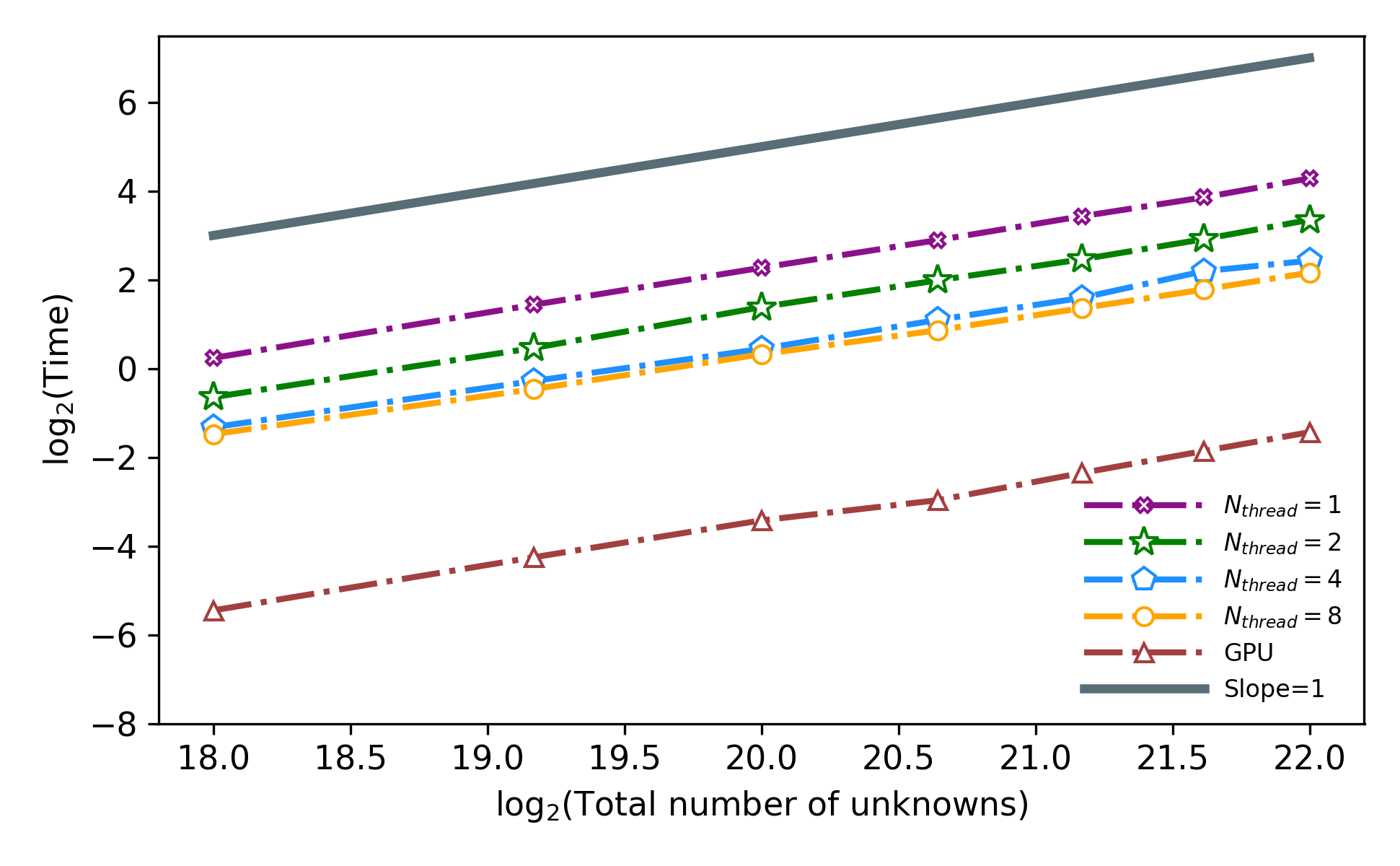}
	\caption{Total number of unknowns versus computational time for the 2D KGS equations.}\label{fig:gpu2d_plot}
\end{figure}

Figure~\ref{parallel-efficiency-energy-2d} illustrates the evolution of the discrete relative energy error for the DP-AVF2 schemes under both CPU and GPU parallel implementations. The left panel shows the results from the CPU parallel strategy based on the block-splitting approach. Different thread counts correspond to different update orders, which in turn result in variations of the DP-AVF2 scheme. Nevertheless, it can still be observed that all variations of the DP-AVF2 scheme preserve the discrete energy within machine precision. The right panel presents the results from the CPU and GPU parallel strategies using the checkerboard grid. For the same grid resolution $N$, the energy errors from CPU and GPU parallel strategies are nearly identical. Therefore, only the GPU parallel energy errors for different $N$ are plotted. It is evident that across different grid resolutions, the discrete energy is also preserved to machine precision.

\begin{figure}[H]
	\centering
		\includegraphics[width=0.48\textwidth]{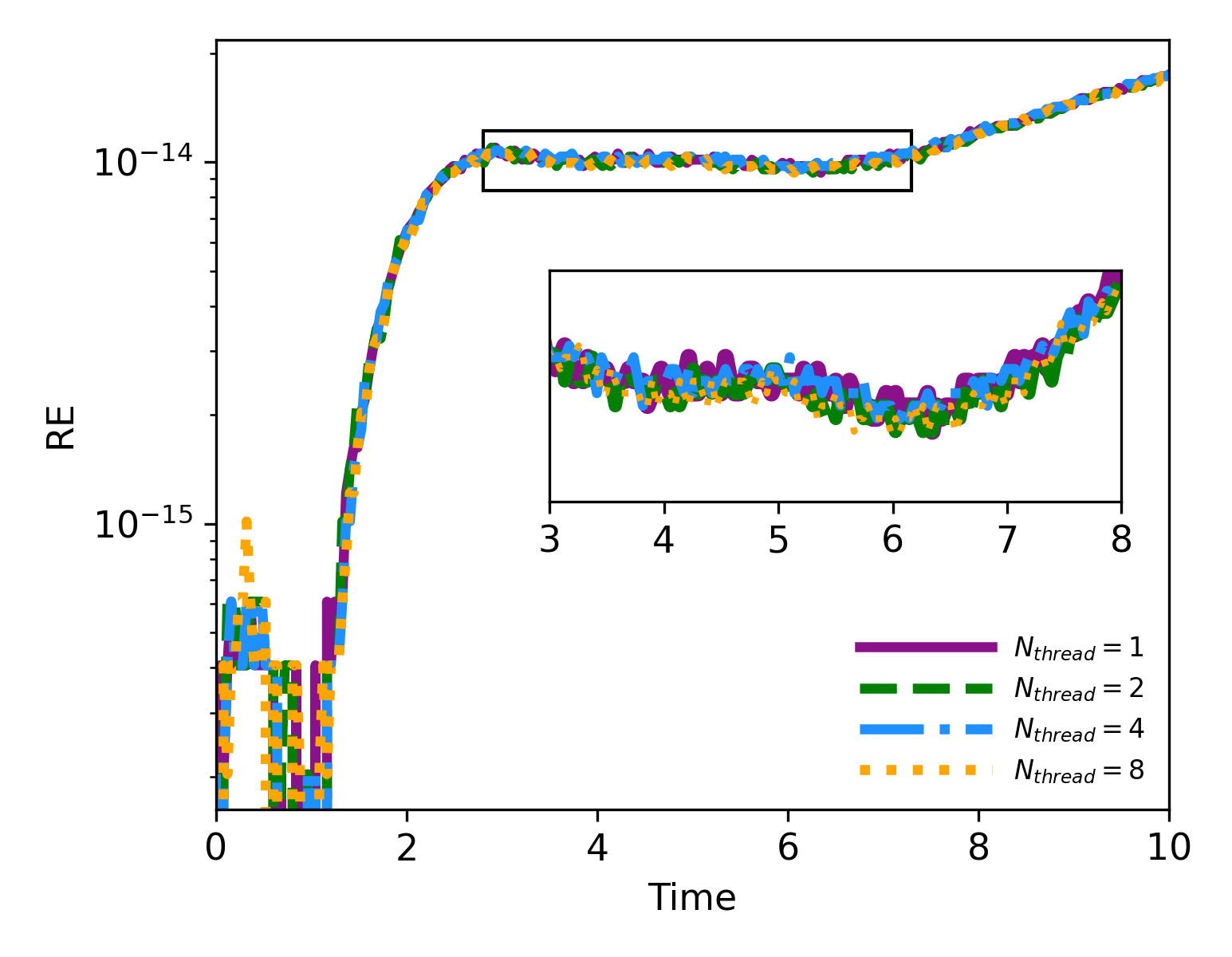}
		\includegraphics[width=0.48\textwidth]{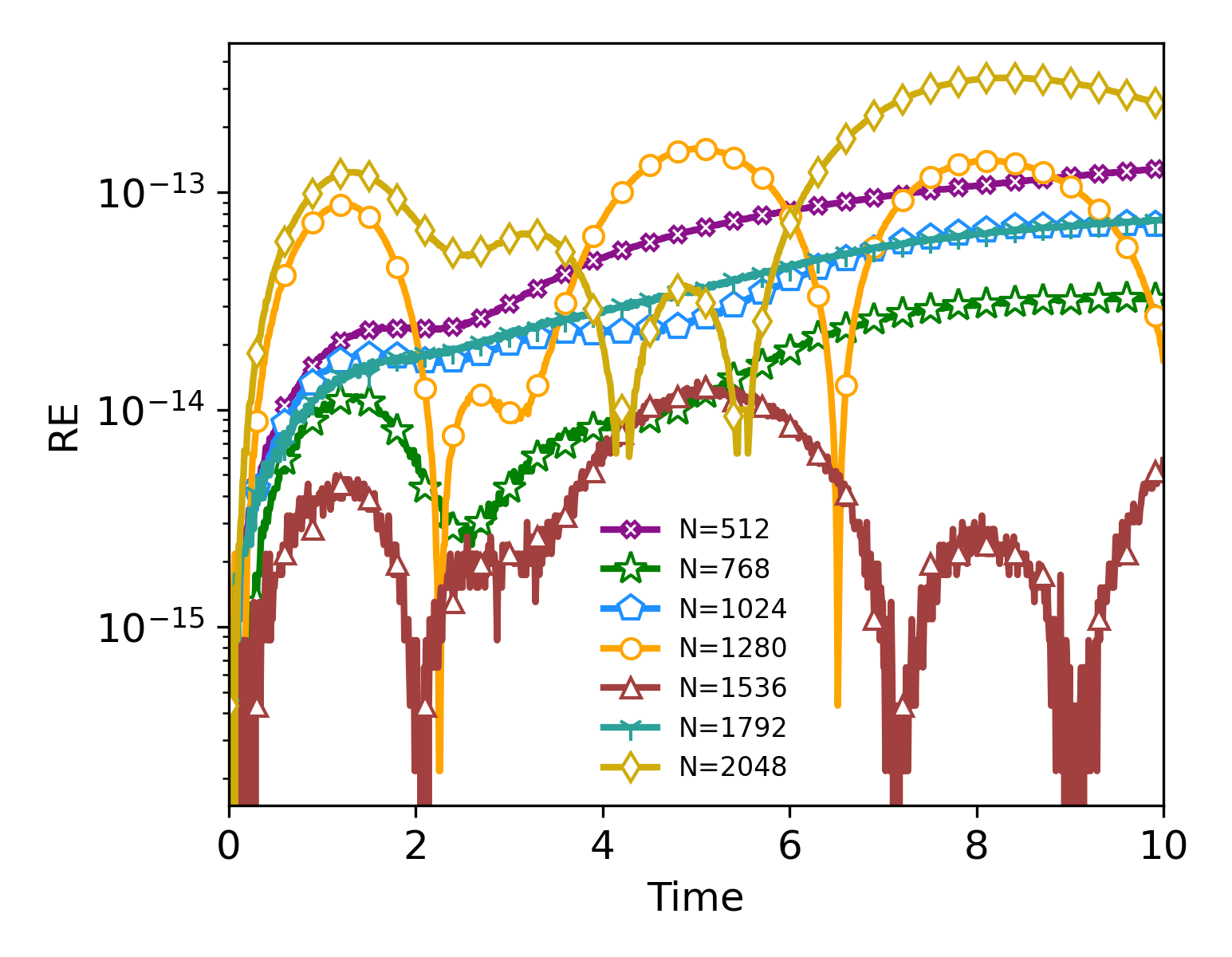}
	\caption{Time evolution of discrete relative energy errors for DP-AVF2 schemes. The left panel shows results from the block-splitting CPU strategy with different thread counts, and the right panel shows results from the checkerboard grid strategy for CPU and GPU implementations.}\label{parallel-efficiency-energy-2d}
\end{figure}

\subsection{Examples of 3D KGS equations}

Since the computational cost of the proposed DP-AVF2 scheme depends linearly on the total number of grid points, it is particularly well-suited for simulating high-dimensional problems. Consider the interactions of 3D circular-elliptical vector solitons governed by the KGS equations. The initial conditions are given as:
\[
\left\lbrace
\begin{aligned}
    & \psi_0(\bm{x}) = \sum_{j = 0}^1 \exp{\left( -\left(x + 2 (-1)^j\right)^2 - y^2 - z^2 \right)} \exp{\left(0.01j \left(x + y + z\right)\right)}, \\
    & u_0(\bm{x}) = \exp{\left(-x^2 - y^2 - (z-2)^2\right)} + \sum_{j = 0}^1 \exp{\left( -\left(x + (-1)^j \sqrt{3}\right)^2 - y^2 - \left(z + 1\right)^2 \right)}, \\
    & v_0(\bm{x}) = \exp{\left( -x^2 - y^2 - z^2 \right)}.
\end{aligned}
\right.
\]
The computational domain is $\Omega = (-10, 10)^3$, and the parameters in \eqref{kgs} are set as $\kappa_1 = -0.4$, $\kappa_2 = 0.1$, $\gamma = 0.2$, and $\mu = 0.1$.

Figure \ref{fig:kgs3dpsi} and \ref{fig:kgs3du} illustrate the interactions of circular-elliptical vector solitons in components $\psi$ and $u$, respectively. The solutions are computed using the DP-AVF2 method with $(h, \tau) = (\frac{5}{64}, \frac{1}{100})$. In component $\psi$, the initial isosurface $ |\psi| = 0.1 $ is composed of two distinct ellipsoids, which radiate and merge at $ t = 1.25 $. As the interaction progresses, a smaller ellipsoid emerges at the center of the domain and gradually expands. In component $u$, the initial configuration comprises three spherical structures. Over time, outward radiation leads to the formation of a new central isosurface $ u = 0.9 $, which subsequently expands and transitions into a single spherical structure as the interaction evolves.

\begin{figure}[H]
	\centering
		\includegraphics[width=0.23\textwidth]{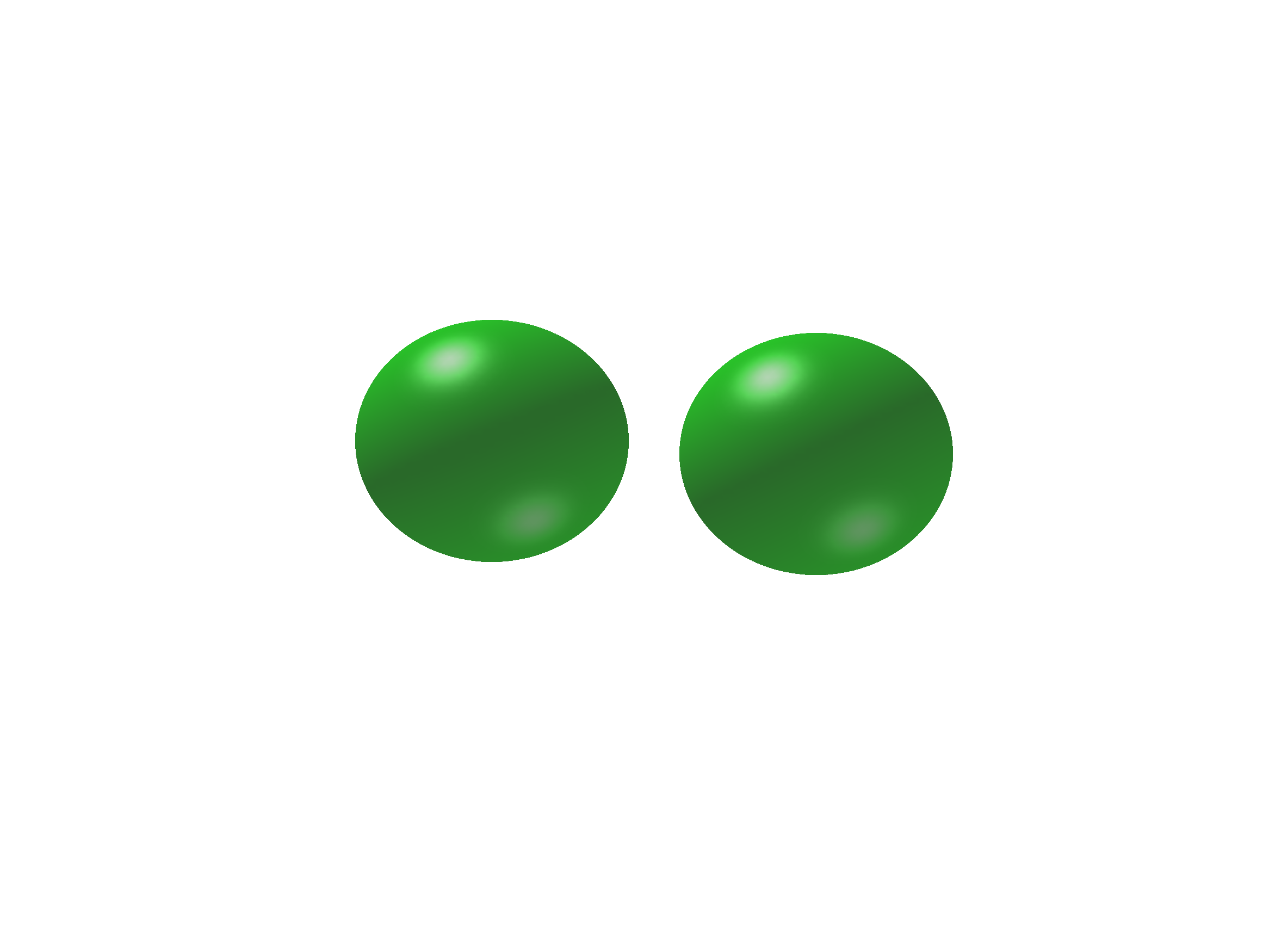}
		\includegraphics[width=0.23\textwidth]{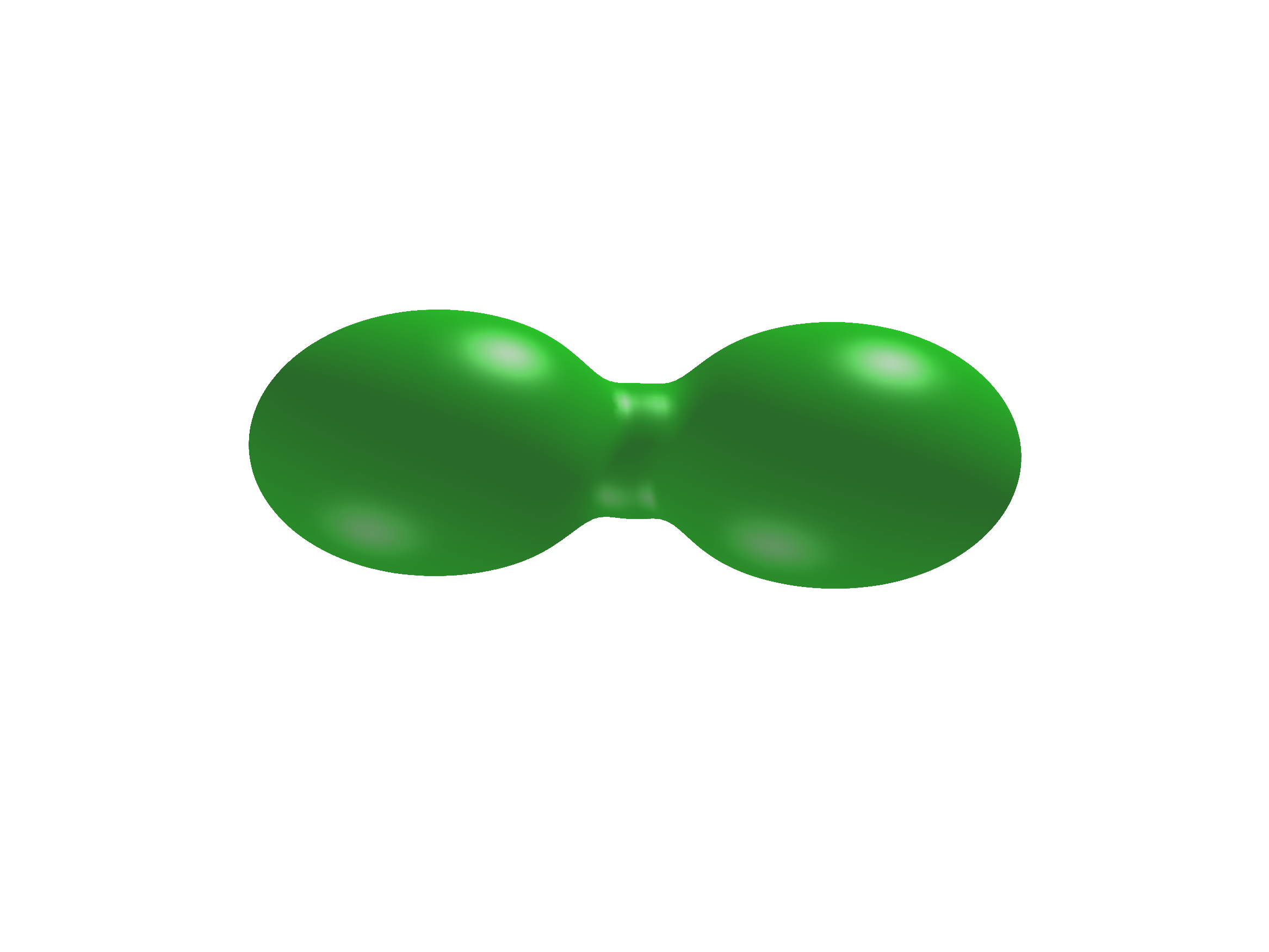}
		\includegraphics[width=0.23\textwidth]{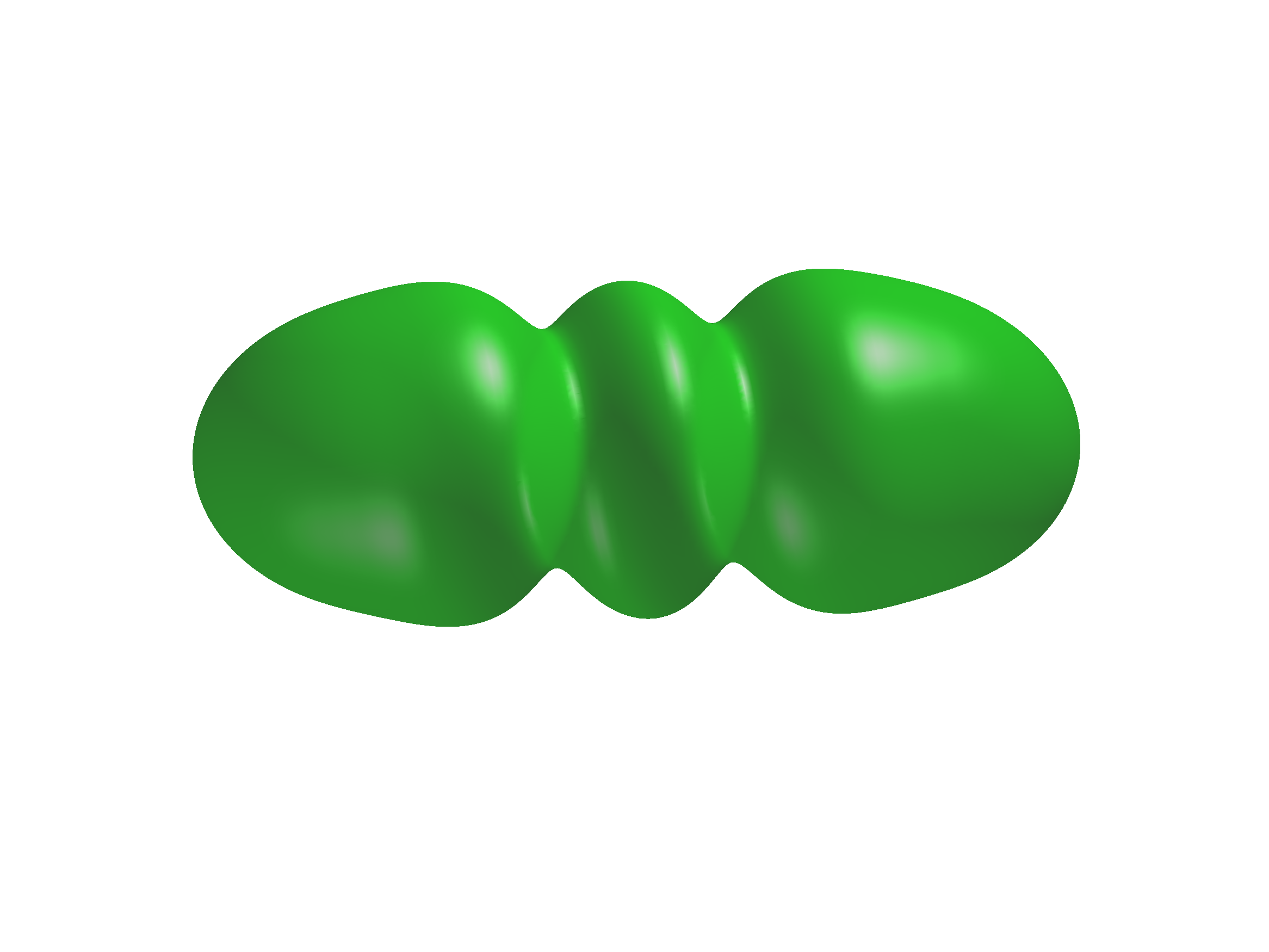}
		\includegraphics[width=0.23\textwidth]{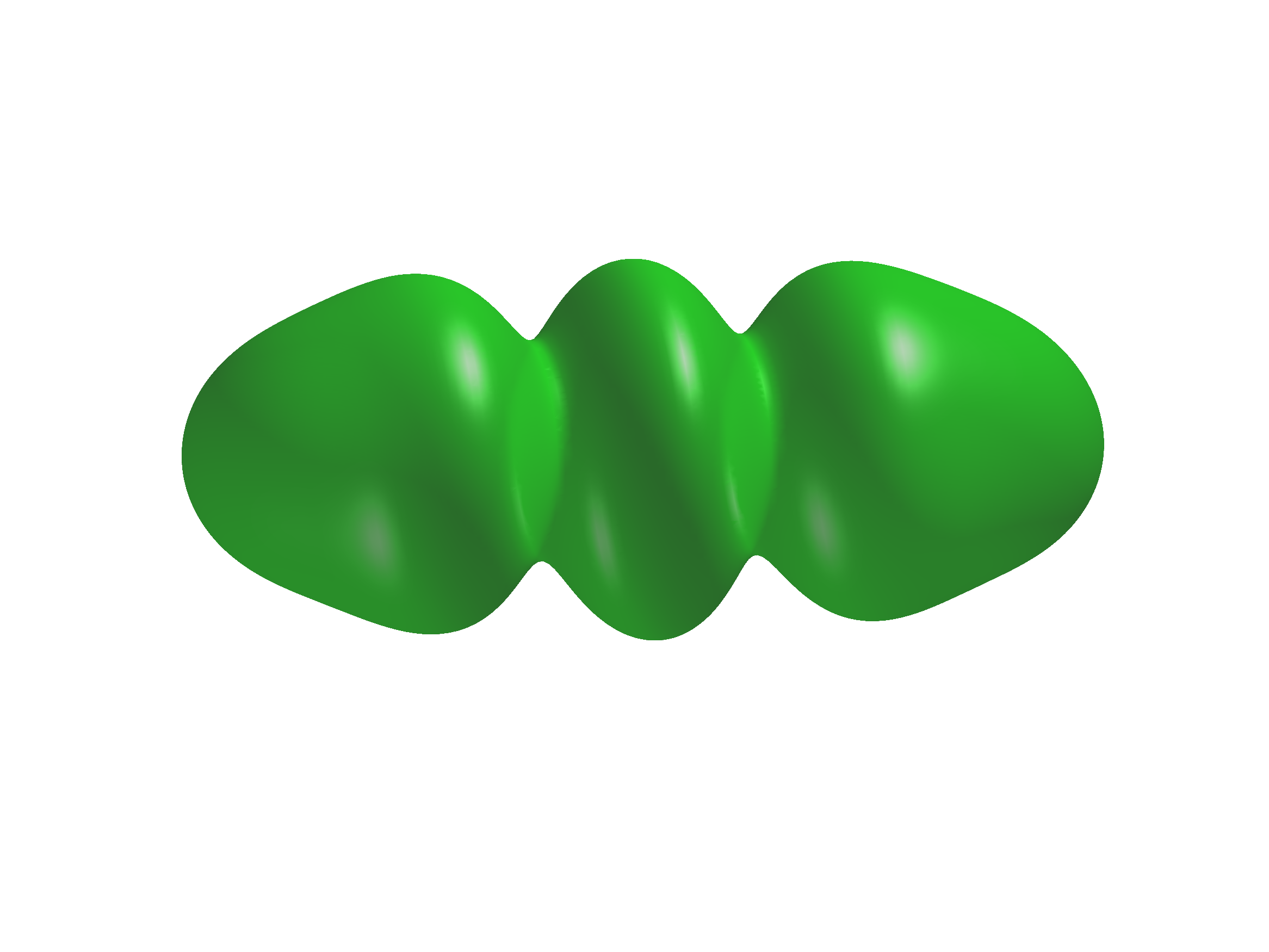}
	\caption{Isosurfaces $|\psi| = 0.1$ solved by the DP-AVF2 scheme under different times $t = 0, 1.25, 2.75, 3.25$.}\label{fig:kgs3dpsi}
\end{figure}

\begin{figure}[H]
	\centering
		\includegraphics[width=0.23\textwidth]{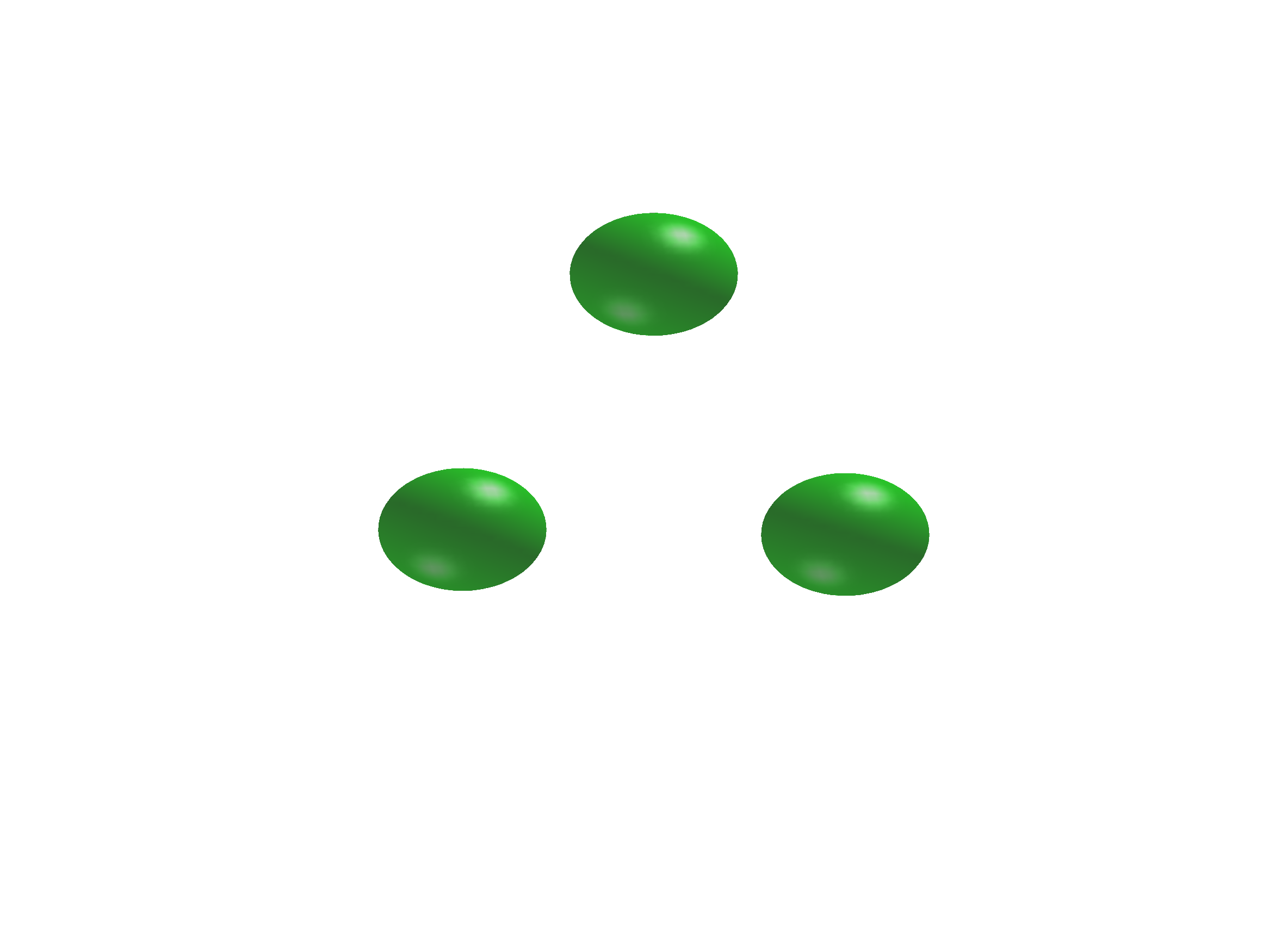}
		\includegraphics[width=0.23\textwidth]{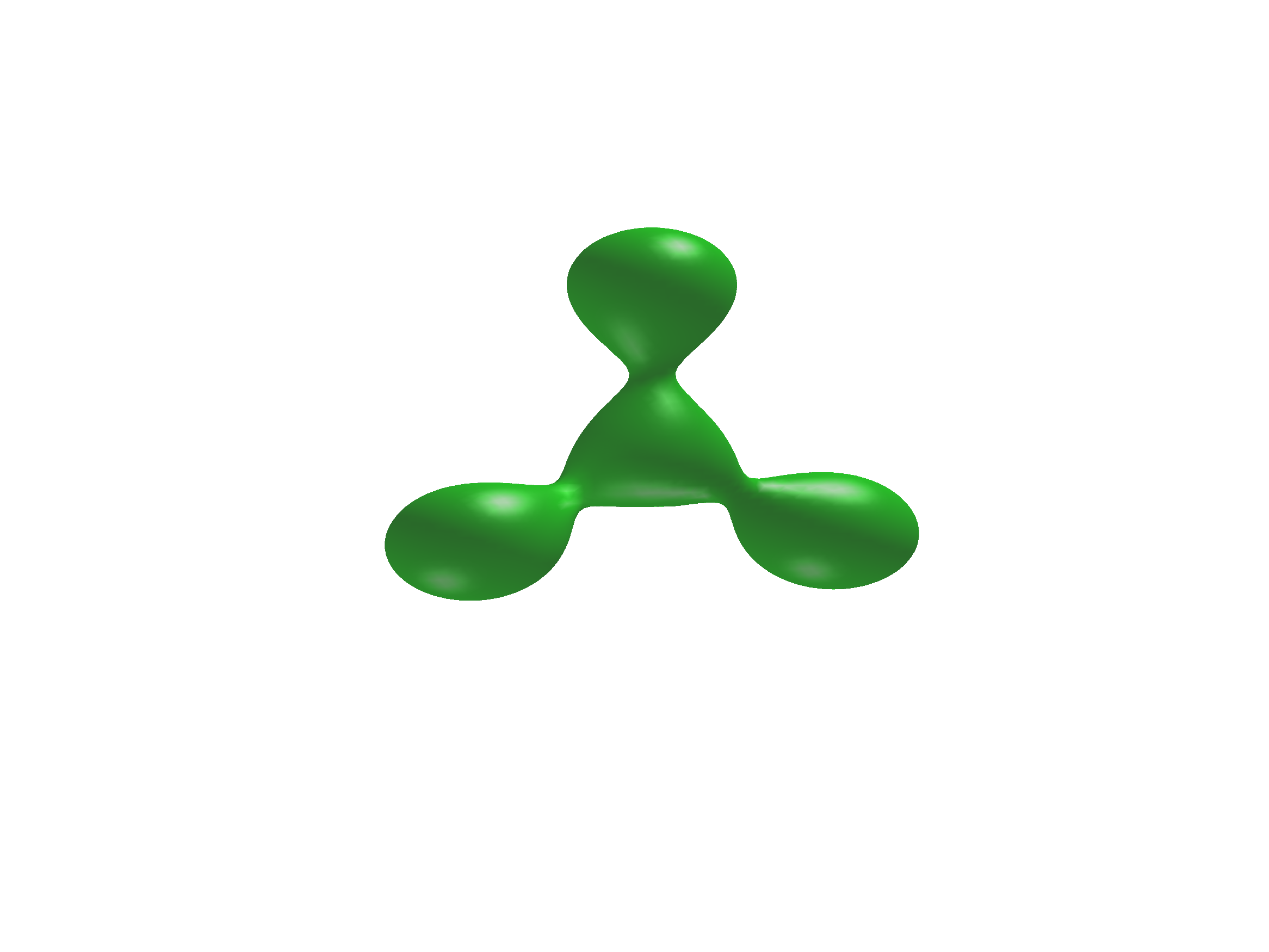}
		\includegraphics[width=0.23\textwidth]{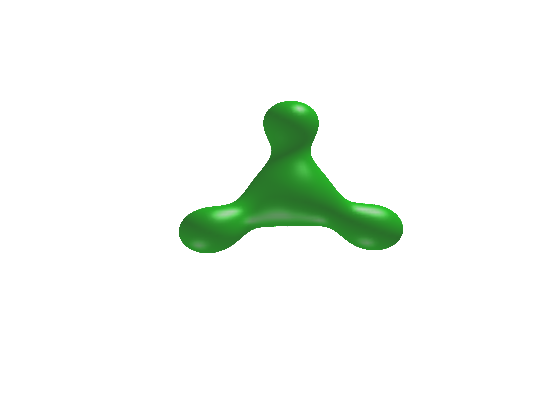}
		\includegraphics[width=0.23\textwidth]{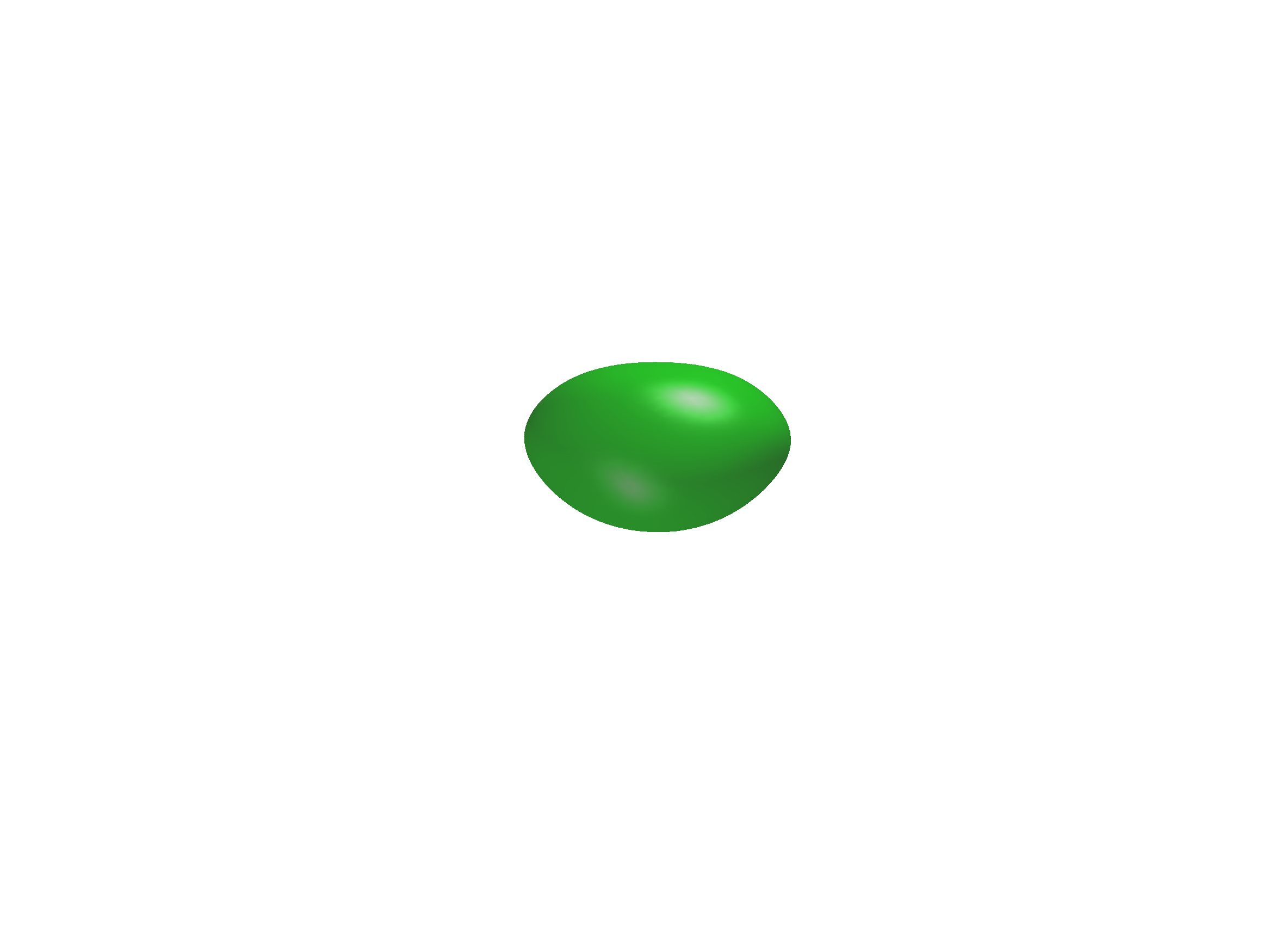}
	\caption{Isosurfaces $u = 0.9$ solved by the DP-AVF2 scheme under different times $t = 0, 0.75, 1, 2.5$.}\label{fig:kgs3du}
\end{figure}

Tables~\ref{tab:cpu3d_list}-\ref{tab:gpu3d_list} present the computational times and speed-ups for CPU and GPU implementations under the block-splitting and checkerboard grid parallel strategies, with $\tau = 0.01$ computed up to $t = 1$. The results reveal several key trends: i) CPU parallelization achieves higher speed-ups as the number of processes increases, but the gap between observed and theoretical speed-up widens as the process count approaches the total number of CPU cores. ii) The checkerboard grid strategy outperforms the block-splitting strategy in terms of CPU computational time, demonstrating greater efficiency. iii) For GPU parallelization with the checkerboard grid, the speed-up significantly exceeds that of CPU parallelization, with values approaching or exceeding 60 for $N = 128, 192, 256$, indicating much higher GPU utilization compared to the 2D case. However, as $N$ increases further, the limited total number of GPU processes restricts consistent speed-up improvements. iv) Computational times for both CPU and GPU parallel strategies scale linearly with the total grid points $N^3$, where doubling $N$ results in an eightfold increase in computational time, a trend further validated by Figure~\ref{fig:gpu3d_plot}.

\begin{table}[H]
	\caption{CPU times and speed-ups of the DP-AVF2 schemes with varying numbers of threads and spatial discretizations for the 3D KGS equations. The block-splitting parallel strategy in Figure \ref{cpu-3d} is applied.}\label{tab:cpu3d_list}
	\centering
	\resizebox{\textwidth}{!}{
		{\scriptsize\begin{tabular}[c]{c|c|cc|cc|cc}
			\toprule
			\multicolumn{1}{c|}{}    &
			\multicolumn{7}{c}{\bf CPU time (s)}                                                                      \\
			\hline
			\multicolumn{1}{c|}{$N$} & 1-thread & 2-thread & speed-up & 4-thread & speed-up & 8-thread & speed-up \\
			\hline
			128                      & 1.25e+01 & 6.66e+00 & 1.88     & 3.44e+00 & 3.63     & 2.87e+00 & 4.36     \\
			256                      & 1.01e+02 & 5.35e+01 & 1.89     & 2.79e+01 & 3.62     & 2.00e+01 & 5.05     \\
			512                      & 8.04e+02 & 4.33e+02 & 1.86     & 2.26e+02 & 3.56     & 1.58e+02 & 5.09     \\
			\bottomrule
		\end{tabular}}
	}
\end{table}

\begin{table}[H]
	\caption{Computational times and speed-ups of the DP-AVF2 schemes on CPU and GPU with varying numbers of threads and spatial discretizations for the 3D KGS equations.}\label{tab:gpu3d_list}
	\centering
	\resizebox{\textwidth}{!}{
		\begin{tabular}[c]{c|c|cc|cc|cc|cc}
			\hline
			\multicolumn{1}{c|}{}             &
			\multicolumn{7}{c|}{\bf CPU time (s)} &
			\multicolumn{2}{c}{\bf GPU time (s)}
			\\
			\hline
			\multicolumn{1}{c|}{$N$}          & 1-thread   & 2-thread   & speed-up & 4-thread   & speed-up & 8-thread   & speed-up & time       & speed-up \\
			\hline
			128                               & 1.19e$+$01 & 6.00e$+$00 & 1.98     & 3.12e$+$00 & 3.81     & 2.76e$+$00 & 4.31     & 1.72e$-$01 & 69.19    \\
			192                               & 3.98e$+$01 & 2.04e$+$01 & 1.95     & 1.04e$+$01 & 3.83     & 7.96e$+$00 & 5.00     & 6.25e$-$01 & 63.68    \\
			256                               & 9.46e$+$01 & 4.81e$+$01 & 1.97     & 2.45e$+$01 & 3.86     & 1.79e$+$01 & 5.28     & 1.61e$+$00 & 58.76    \\
			320                               & 1.87e$+$02 & 9.59e$+$01 & 1.95     & 4.99e$+$01 & 3.75     & 3.48e$+$01 & 5.37     & 3.30e$+$00 & 56.67    \\
			384                               & 3.23e$+$02 & 1.61e$+$02 & 2.01     & 8.80e$+$01 & 3.67     & 5.86e$+$01 & 5.51     & 6.03e$+$00 & 53.57    \\
			448                               & 5.15e$+$02 & 2.69e$+$02 & 1.91     & 1.34e$+$02 & 3.84     & 9.19e$+01$ & 5.60     & 9.96e$+$00 & 51.71    \\
			512                               & 7.71e$+$02 & 3.90e$+$02 & 1.98     & 1.98e$+$02 & 3.89     & 1.33e$+$02 & 5.80     & 1.56e$+$01 & 49.42    \\
			\hline
		\end{tabular}
	}
\end{table}

\begin{figure}[H]
	\centering
		\includegraphics[width=0.7\textwidth]{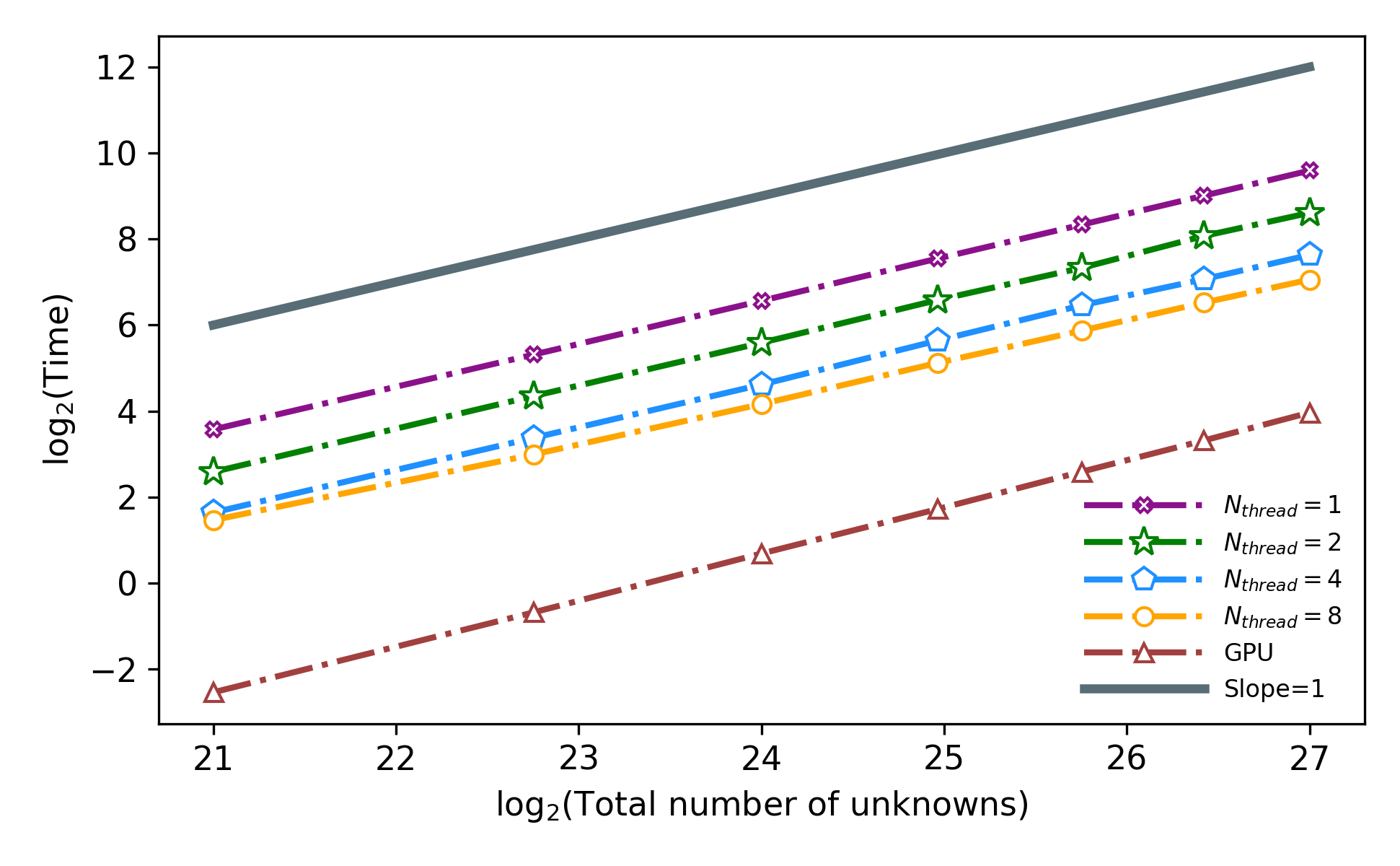}
	\caption{Total number of unknowns versus computational time for the 3D KGS equations.}\label{fig:gpu3d_plot}
\end{figure}

\begin{figure}[H]
	\centering
		\includegraphics[width=0.48\textwidth]{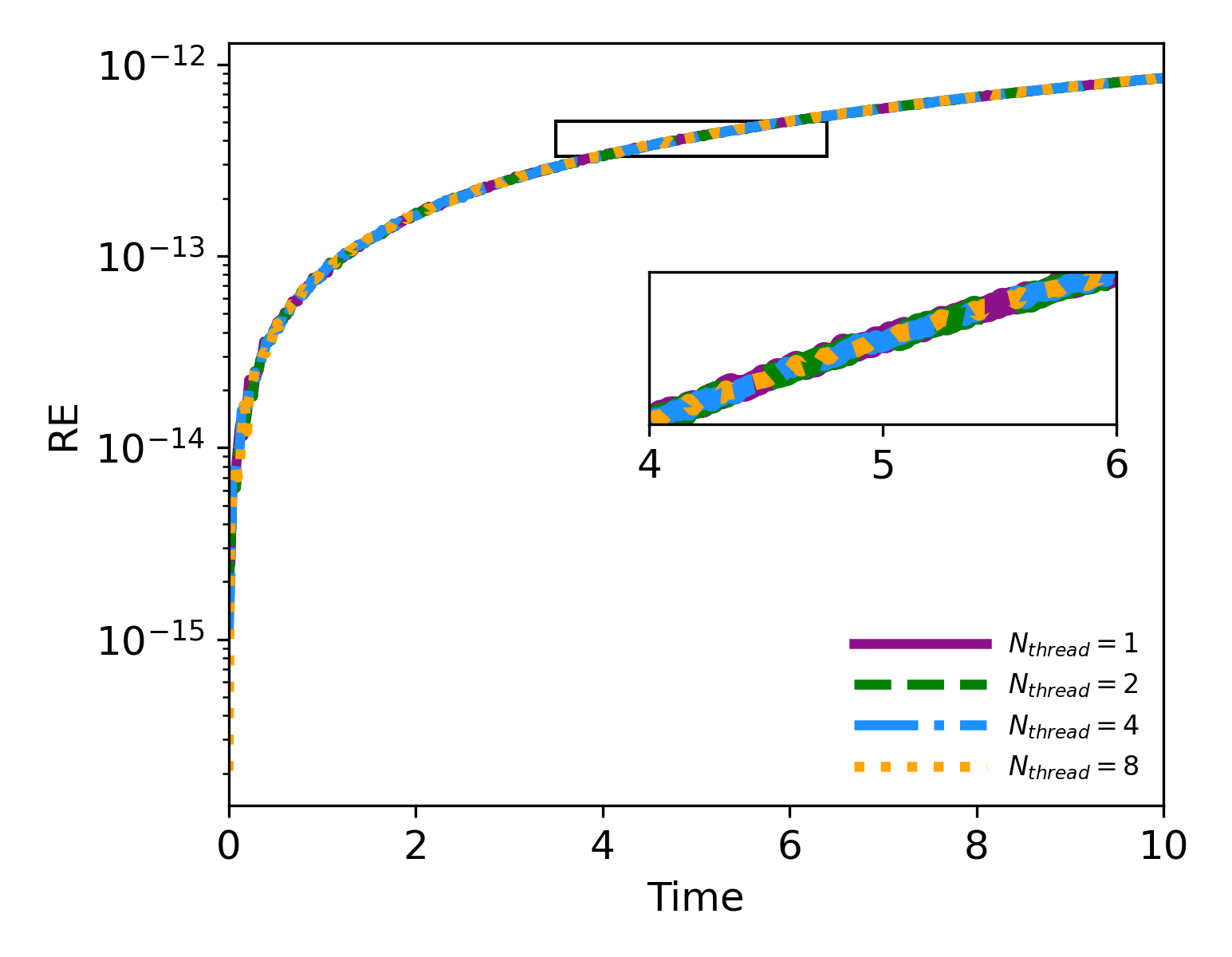}
		\includegraphics[width=0.48\textwidth]{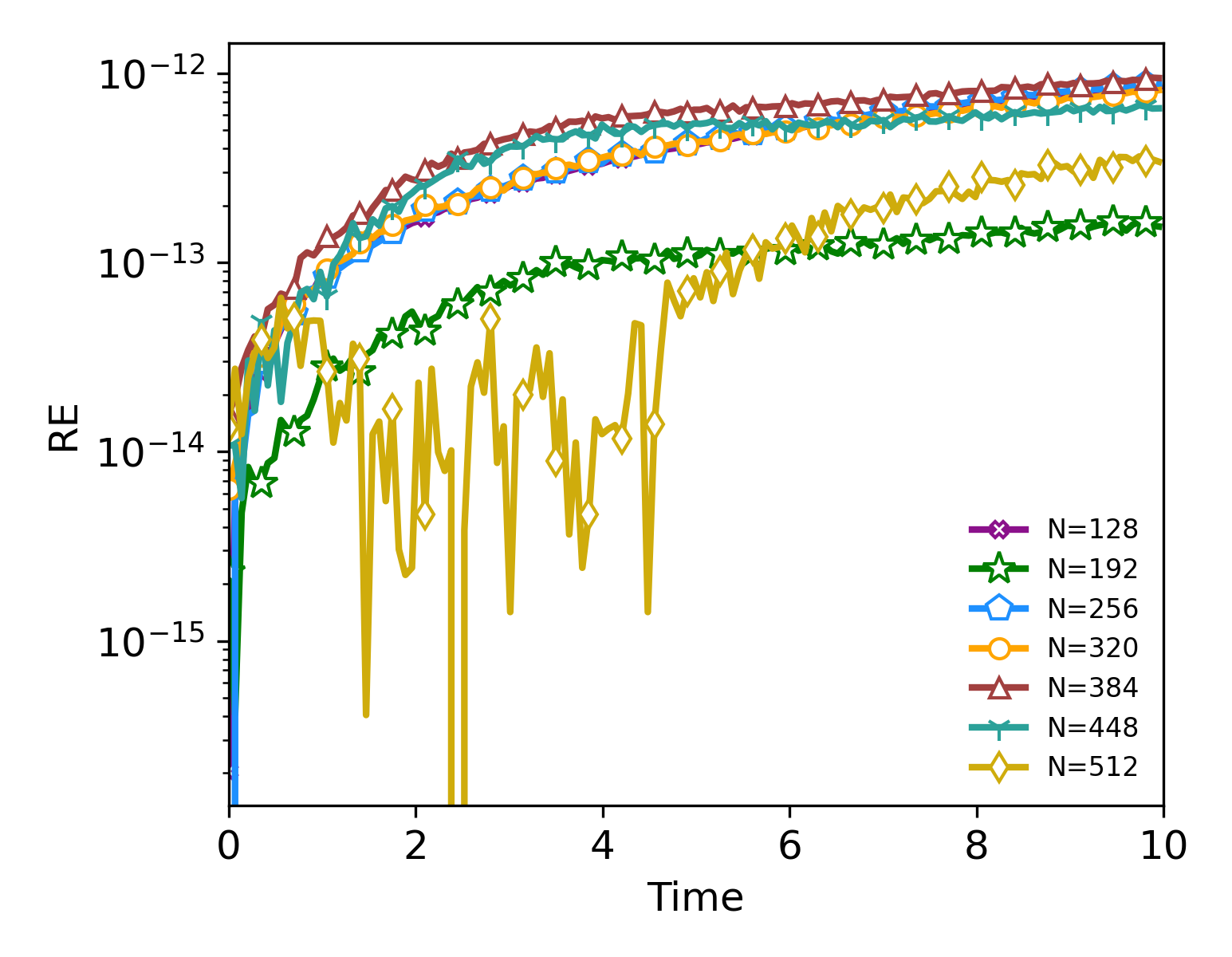}
	\caption{Time evolution of discrete relative energy errors for DP-AVF2 schemes. The left panel shows results from the block-splitting CPU strategy with different thread counts, and the right panel shows results from the checkerboard grid strategy for CPU and GPU implementations. }\label{cpu_energy3d}
\end{figure}

Figure~\ref{cpu_energy3d} shows the evolution of the discrete relative energy error in the 3D case, computed using the DP-AVF2 scheme with a time step of $\tau = 0.02$.  The left panel presents results for the block-splitting strategy with different process counts. Although these correspond to variations of the DP-AVF2 scheme, the discrete energy is still preserved to machine precision. The right panel illustrates GPU parallel results for the checkerboard grid with different values of $N$, as representative examples. The CPU parallel results for the block-splitting strategy are nearly identical. It can also be observed that, even with the random update order inherent in GPU parallelization, the energy in all cases is preserved within round-off errors.

\section{Conclusion}\label{sec:6}

For multivariate coupled systems, we combine the variable-based and grid-point-based partitioned AVF methods to introduce a novel dual-partition AVF approach. This method not only preserves the system's original energy but also provides flexible partition strategies for both variables and grid points, allowing the construction of highly efficient numerical schemes that can be implemented pointwise. We further apply this time integration method to the coupled KGS equations after the central finite difference semi-discretization, resulting in schemes that achieve pointwise and even explicit solutions. Moreover, we propose two parallelizable partition strategies: the block-splitting strategy and the checkerboard grid. Both strategies enable parallel computation on CPUs, with the latter particularly well-suited for parallel execution on GPUs, significantly enhancing the computational efficiency of the dual-partition AVF method for high-dimensional KGS equations. Numerical experiments validate the effectiveness, conservation properties, and computational efficiency of the proposed schemes.

\section*{Acknowledements}
This work is supported by the National Natural Science Foundation of China (12171245, 11971242, 61872422).


\end{document}